\theoremstyle{definition}
\newtheorem{thm}{Theorem}
\newtheorem{lem}[thm]{Lemma}
\newtheorem{prop}[thm]{Proposition}
\newtheorem{cor}[thm]{Corollary}
\newtheorem{example}[thm]{Example}
\newtheorem{remark}[thm]{Remark}
\newtheorem{definition}[thm]{Definition}
\newtheorem{question}[thm]{Question}
\numberwithin{thm}{section}
\newtheorem*{QsymTheorem}{Theorem \ref{thm:qsym-sym}}
\newtheorem*{DagTheorem}{Theorem \ref{thm:dag}}
\newtheorem*{MountainTheorem}{Theorem \ref{thm:mountains}}
\newcommand{\Sym}{\mathrm{Sym}}
\newcommand{\QSym}{\mathrm{QSym}}
\newcommand{\N}{\mathbb{N}}
\newcommand{\rev}{\mathrm{rev}}
\newcommand{\asc}{\mathrm{asc}}
\newcommand{\cycle}{\mathrm{cycle}}
\newcommand{\reflect}{\mathrm{reflect}}
\newcommand{\swap}{\mathrm{swap}}
\newcommand{\stat}{\mathrm{stat}}
\title{When is the Chromatic Quasisymmetric Function Symmetric?}
\author{Maria Gillespie} 
\address{Department of Mathematics, Colorado State University, Fort Collins, CO 80523, USA}
\email{maria.gillespie@colostate.edu}
\thanks{All authors were partially supported by NSF DMS award number 2054391.}
\author{Joseph Pappe}
\address{Department of Mathematics, Colorado State University, Fort Collins, CO 80523, USA}
\email{joseph.pappe@colostate.edu}
\author{Kyle Salois}
\address{Department of Mathematics, Colorado State University, Fort Collins, CO 80523, USA}
\email{kyle.salois@colostate.edu}
\date{\today}
\date{\today}
\begin{document}

\begin{abstract}
    We investigate the problem of when a chromatic quasisymmetric function (CQF) $X_G(x;q)$ of a graph $G$ is in fact symmetric. We first prove the remarkable fact that if a product of two quasisymmetric functions $f$ and $g$ in countably infinitely many variables is symmetric, then in fact $f$ and $g$ must be symmetric.  This allows the problem to be reduced to the case of connected graphs.

    We then show that any labeled graph having more than one source or sink has a nonsymmetric CQF.  As a corollary, we find that all trees other than a directed path have a nonsymmetric CQF. We also show that a family of graphs we call ``mixed mountain graphs'' always have symmetric CQF.   
\end{abstract}

\maketitle

\section{Introduction}
Chromatic symmetric functions of graphs are a topic of much recent study, especially in light of the Stanley--Stembridge and Shareshian--Wachs Conjectures.  The \textbf{chromatic symmetric function} of a graph $G$ on $n$ vertices $1,2,\ldots,n$ is defined to be $$X_G(x)=X_G(x_1,x_2,\ldots)=\sum_{\substack{\kappa:[n]\to \mathbb{N} \\ \text{ proper}}} x_{\kappa(1)}x_{\kappa(2)}\cdots x_{\kappa(n)}$$ where $\kappa$ is a proper coloring of $G$ with colors from $\mathbb{N}$.   The \textbf{chromatic quasisymmetric function (CQF)} $X_G(x;q)$ is a $q$-analog of $X_G(x)$ defined as $$X_G(x;q)=\sum_\kappa q^{\asc(\kappa)}x_{\kappa(1)}\cdots x_{\kappa(n)}$$ where $\asc(\kappa)$ is the number of adjacent pairs $(i,j)$ of vertices with $i<j$ and $\kappa(i)<\kappa(j)$ (such pairs are called \textit{ascents} of $\kappa$).

\begin{example}\label{ex:1}
    The CQF of the path graph of length $1$ shown in Figure \ref{fig:P1} is $$(1+q)(x_1x_2+x_1x_3+x_2x_3+\cdots)=(1+q)e_2$$ where $e_2$ is the elementary symmetric function of degree $2$.
\end{example}
\begin{figure}[h]
    \begin{center}
\includegraphics{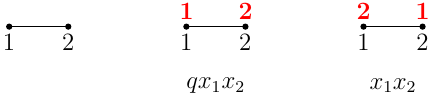}
    \end{center}\caption{
    The path graph of length $1$ with its labels is shown at left above; two colorings are shown at middle and right, with the middle having one ascent and the right hand diagram having none.}
    \label{fig:P1}
\end{figure}

The Stanley--Stembridge conjecture \cite{Stanley95,StanleyStembridge} states that certain chromatic symmetric functions are \textit{$e$-positive}, meaning that they expand positively in the basis of elementary symmetric functions (see Section \ref{sec:symmetric-functions-background}).  In particular, the subset of graphs in this conjecture are the \textit{incomparability graphs} of posets that are ``3+1--avoiding'', meaning they have no induced subposet isomorphic to the disjoint union of a chain of length $3$ and a singleton. Gasharov \cite{Gasharov} showed that for such graphs, $X_G(x)$ expands positively in the Schur basis, and a large body of modern work has shown that special sub-families of such graphs exhibit the desired $e$-positivity \cite{AN21,AWvW21,CH22,CH19,CMP23,Dahlberg18,DvW18,FHM19,GS01,HP19,HNY20,LY21,MPW24,Stanley95,Tom23,WW23,Wang22}. Recently, a proof of the Stanley--Stembridge conjecture was proposed by Hikita in \cite{Hik24}, and another soon followed by Griffin et al.\ in \cite{Griffin}.

The Shareshian--Wachs conjecture \cite{SW16} refines this conjecture in a special case.  An important example of graphs coming from $3+1$-free posets are \textbf{unit interval graphs}, whose vertex set is a collection of unit length intervals on the real line, and whose edges correspond to overlapping intervals.  For such graphs, Shareshian and Wachs conjectured that $X_G(x;q)$ is also $e$-positive, in the sense that its coefficients in the $e$ basis are in $\mathbb{Z}_+[q]$.  They also conjectured a connection to the cohomology rings of \textit{Hessenberg varieties}, which was later proven by Brosnan and Chow \cite{BrosnanChow} and independently by Guay-Paquet \cite{GP16}. Moreover, a result by Guay-Paquet \cite{GP13} shows that the Shareshian--Wachs conjecture on unit interval graphs implies the Stanley--Stembridge conjecture for all $3+1$-free posets.

Both the Stanley--Stembridge conjecture and the Shareshian--Wachs conjecture have been proven in a number of notable special cases, involving various infinite subfamilies of graphs \cite{AN21,CH19,CMP23,HP19,HNY20,SW16,Tom23}.  There are also additional families of graphs whose CQF is known to be $e$-positive, most notably the cycle graphs $C_n$ consisting of $n$ vertices connected in a length $n$ cycle \cite{EW20}.   However, not all CQF's are $e$-positive, and in particular they are not even necessarily symmetric functions, as shown in the following example.

\begin{example}
   Consider the graph $G$ on $\{1,2,3\}$ shown in Figure \ref{fig:P2}.  The coefficient of the maximal power of $q$ in $X_G(x;q)$, namely $q^2$, is $$2e_3+(x_1^2x_2+x_1^2x_3+x_2^2x_3+\cdots)=2e_3+M_{2,1}$$ where $M_{2,1}$ is the monomial quasisymmetric function, which is not symmetric.
\end{example}

\begin{figure}
    \centering
    \includegraphics{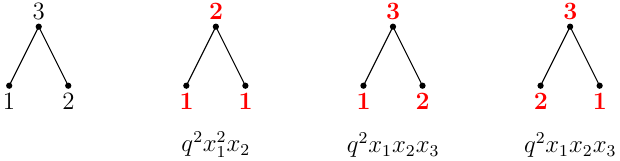}
    \caption{A graph $G$ shown at left, for which $X_G(x;q)$ is not symmetric.  Three of its colorings with maximal number of ascents are shown to the right.}
    \label{fig:P2}
\end{figure}

Due to what is known about the cases of cycle graphs and unit interval graphs, we pose the following question.

\begin{question}
    Is every CQF that is symmetric also $e$-positive?
\end{question}

In order to begin to address this question, here we explore the problem of determining when a CQF is symmetric.  We first reduce the problem to the case of connected graphs using our first main result:

\begin{thm}\label{thm:qsym-sym}
    Suppose $f$ and $g$ are quasisymmetric functions in countably infinitely many variables and $f\cdot g$ is symmetric.  Then both $f$ and $g$ are symmetric.
\end{thm}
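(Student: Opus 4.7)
The plan is to prove the contrapositive: if $f$ or $g$ fails to be symmetric, then $fg$ also fails to be symmetric. The strategy exploits the polynomial-ring structure of $\QSym$ over $\Sym$. Over $\mathbb{Q}$, Hoffman's theorem identifies $\QSym$ (with its quasi-shuffle product) with the shuffle algebra on the alphabet $\N$, and Radford's theorem identifies the shuffle algebra with a polynomial ring on Lyndon words. Under these combined identifications, the subalgebra $\Sym \subset \QSym$ corresponds precisely to the sub-polynomial-ring generated by the length-one Lyndon words, which map to the power sum symmetric functions $p_a = M_{(a)}$. Thus $\QSym$ can be written as $\Sym[G_w : w \text{ Lyndon}, |w| \geq 2]$, a genuine polynomial ring over $\Sym$.

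With this structural fact in hand, the theorem follows from a standard property of polynomial rings over an integral domain: if $R$ is a domain, then in $R[X_1, X_2, \ldots]$ one has $\deg_{X_i}(uv) = \deg_{X_i}(u) + \deg_{X_i}(v)$ for each $i$, so any equation $uv \in R$ forces both $u, v \in R$. Applied with $R = \Sym$, the hypothesis $fg \in \Sym$ immediately yields $f, g \in \Sym$.

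The main obstacle is establishing the structural identification $\QSym = \Sym[G_w]$ as a polynomial ring over $\Sym$, which leans on the nontrivial theorems of Hoffman and Radford. A more self-contained alternative is to first reduce to a purely shuffle-algebra statement by analyzing top-length pieces: letting $L$ and $M$ be the maximum values of $\ell(\alpha)$ and $\ell(\beta)$ in the supports of $f$ and $g$, and writing $\boxtimes$ for the shuffle product, merges strictly decrease length so the length-$(L+M)$ part of $fg$ equals $f^{[L]} \boxtimes g^{[M]}$ (no merging). The symmetry of $fg$ forces this to be $S_{L+M}$-symmetric, and Radford's theorem alone then suffices to conclude $f^{[L]}, g^{[M]} \in \Sym$; iterating downward in length (with care to disentangle cross-merge contributions from shorter shuffles) gives the full result. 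The infinite-variable hypothesis is essential throughout: the analog fails in too few variables, for instance $M_{(2,1)}(x_1,x_2) \cdot M_{(1,2)}(x_1,x_2) = x_1^3 x_2^3 \in \Sym$ in two variables, though neither factor is symmetric.
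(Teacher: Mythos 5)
Your main argument is correct, but it takes a genuinely different route from the paper. The paper works integrally: it invokes Hazewinkel's theorem that $\QSym_{\mathbb{Z}}$ is a polynomial ring over $\mathbb{Z}$ whose generators include the $e_n$, deduces that $\QSym$ is a UFD and that irreducibility persists when adjoining generators, and then runs an induction on $\deg h$ with a case split (either $h$ is irreducible in $\Sym$, hence irreducible in $\QSym$ and admits no nontrivial factorization, or $h$ factors in $\Sym$ and unique factorization forces any quasisymmetric factorization to regroup symmetric irreducibles). You instead establish, via Hoffman's quasi-shuffle isomorphism and Radford's theorem (equivalently, the known fact that $\{M_w : w \text{ Lyndon}\}$ freely generates $\QSym_{\mathbb{Q}}$, with the length-one generators being $M_{(a)}=p_a$), that $\QSym_{\mathbb{Q}}$ is a polynomial ring over $\Sym_{\mathbb{Q}}$, after which the theorem is a one-line degree-additivity statement for polynomial rings over a domain -- no induction, no irreducibility bookkeeping. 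What each approach buys: yours is shorter and conceptually cleaner once the structural fact is granted, but it requires rational coefficients (harmless here, since symmetry is a condition on coefficients under permutation of variables and is unaffected by passing from $\mathbb{Z}$ to $\mathbb{Q}$ -- worth one sentence to say so explicitly); the paper's route stays over $\mathbb{Z}$ and yields extra information about factorizations in $\QSym_{\mathbb{Z}}$, at the cost of leaning on Hazewinkel's harder integral theorem and a more involved argument. One caveat: your ``more self-contained alternative'' via top-length components is only a sketch -- the claim that the length-$(L+M)$ part is the pure shuffle $f^{[L]}\boxtimes g^{[M]}$ is fine, but the downward iteration through lower lengths, where merges and shuffles of shorter pieces interact, is exactly the nontrivial part and is not worked out; since your main argument does not depend on it, this does not affect the correctness of the proposal.
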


Note that this does not hold for finitely many variables; we have that $x_1^2x_2\cdot x_1x_2^2=x_1^3x_2^3$ is symmetric in two variables.   Since CQF's are multiplicative across disjoint union of graphs, we immediately have the following.

\begin{cor}
    Suppose $G$ is a graph with two or more connected components, and $X_G(x;q)$ is symmetric.  Then the CQF of each connected component is symmetric.
\end{cor}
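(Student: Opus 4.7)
The plan is to combine Theorem \ref{thm:qsym-sym} with two standard facts about chromatic quasisymmetric functions: (i) $X_H(x;q)$ is a quasisymmetric function for every labeled graph $H$, and (ii) the CQF is multiplicative across disjoint unions, i.e.\ if $H = H_1 \sqcup H_2$ (with the vertex labels of the components taken from disjoint sets of positive integers) then $X_H(x;q) = X_{H_1}(x;q) \cdot X_{H_2}(x;q)$. Fact (ii) is immediate from the definition, since a proper coloring of $H$ is the same data as a pair of proper colorings of $H_1$ and $H_2$, and an ascent of the joint coloring is either an ascent of the restriction to $H_1$ or of the restriction to $H_2$ (there are no edges across).

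With these in hand, suppose first that $G$ has exactly two connected components $G_1$ and $G_2$. Then $X_G(x;q) = X_{G_1}(x;q)\cdot X_{G_2}(x;q)$, and both factors are quasisymmetric in the countably infinite variable set $x_1,x_2,\ldots$. Since $X_G(x;q)$ is assumed symmetric, Theorem \ref{thm:qsym-sym} applies coefficient-by-coefficient in $q$ (each power of $q$ is a symmetric function of degree $n$ that factors as a product of two quasisymmetric functions), yielding that $X_{G_1}(x;q)$ and $X_{G_2}(x;q)$ are both symmetric.

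For the general case of $k \geq 2$ components, I would proceed by induction on $k$. Write $G = G_1 \sqcup G'$ where $G'$ is the union of the remaining $k-1$ components. By fact (ii), $X_G = X_{G_1}\cdot X_{G'}$, and by the two-component case $X_{G_1}$ and $X_{G'}$ are each symmetric. The inductive hypothesis applied to $G'$ then gives that every connected component of $G'$ has symmetric CQF, completing the proof.

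I do not anticipate a serious obstacle here: the content of the corollary is really in Theorem \ref{thm:qsym-sym}, and the only subtlety is reminding the reader (or checking) that $X_H(x;q)$ is always quasisymmetric, so that Theorem \ref{thm:qsym-sym} can be invoked. The one minor care needed is in applying Theorem \ref{thm:qsym-sym} to a polynomial in $q$ with quasisymmetric coefficients; this is handled by extracting each coefficient of $q^i$ individually and observing that the coefficient of $q^i$ in $X_G$ is itself a symmetric function that factors as the convolution of the $q^j$-coefficients of $X_{G_1}$ and $X_{G_2}$, or equivalently by working over the base ring $\mathbb{Z}[q]$ and noting that the theorem's proof goes through verbatim.
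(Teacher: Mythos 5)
Your proposal is correct and takes essentially the same route as the paper, whose entire proof is the observation that $X_G(x;q)$ is the product of the $X_{G_i}(x;q)$ together with an appeal to Theorem \ref{thm:qsym-sym}. One caution on your handling of the variable $q$: the parenthetical claim that each $q^i$-coefficient of $X_G$ ``factors as a product of two quasisymmetric functions'' is not accurate (it is a convolution of coefficients, to which Theorem \ref{thm:qsym-sym} does not apply termwise), so of the two fixes you offer only the second is sound --- work over the base ring $\mathbb{Z}[q]$ and check the theorem's proof goes through, or equivalently specialize $q$ to infinitely many integer values and interpolate --- a subtlety the paper itself passes over in silence.
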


Some prior progress on the classification of symmetric CQF's includes the observation in \cite{SW16} that if a CQF is symmetric, then its coefficients in the basis of monomial quasisymmetric functions $M_\alpha$ are \textit{palindromic} polynomials in $q$ (when reading off the coefficients of each $q^i$).  More recently, in the preprint \cite{Steph}, Aliniaeifard, Asgarli, Esipova, Shelburne, van Willigenburg, and Whitehead McGinley showed that for any non-standard orientation of the path graph and for any orientation of the star graph, the resulting CQF is not symmetric. Our second main result generalizes this to all directed acyclic graphs with more than one sink or source.

\begin{thm}\label{thm:dag}
    The CQF of any connected, directed acyclic graph with more than one sink or source is not symmetric.
\end{thm}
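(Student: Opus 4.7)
My plan would be to show that the top-degree $q$-coefficient of $X_G(x;q)$ is already not symmetric, which is sufficient since symmetry of $X_G(x;q)$ forces each coefficient in $q$ to be symmetric. First I would reduce to the case of at least two sinks via the label-reversal duality $G \mapsto G^*$ (relabeling $i \mapsto n+1-i$), which exchanges sources and sinks and satisfies $X_G(x;q) = q^{|E|} X_{G^*}(x;q^{-1})$, so it preserves whether the CQF is symmetric. Set $S_G(x) := [q^{|E(G)|}] X_G(x;q)$; this is the generating function for colorings in which every edge is an ascent, equivalently the strict $P$-partition generating function of $G$.

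Next I would invoke Stanley's $P$-partition machinery to write
\[
S_G(x) \;=\; \sum_{\pi \in \mathcal{L}(G)} F_{D^*(\pi)}(x),
\]
where $\mathcal{L}(G)$ is the set of linear extensions of the DAG $G$ and the \emph{augmented descent set} is $D^*(\pi) = \{i \in [n-1] : \pi_i > \pi_{i+1} \text{ or } \{\pi_i,\pi_{i+1}\} \in E(G)\}$. The usual expansion $F_S = \sum_{D(\alpha) \supseteq S} M_\alpha$ then yields $[M_\alpha] S_G = |\{\pi \in \mathcal{L}(G) : D^*(\pi) \subseteq D(\alpha)\}|$. For compositions of the form $\alpha = (1^{k-1}, 2, 1^{n-k-1})$ with a single part equal to $2$ at position $k$, we have $D(\alpha) = [n-1] \setminus \{k\}$, so $[M_\alpha] S_G$ counts linear extensions $\pi$ in which position $k$ is an \emph{ascending non-edge}: $\pi_k < \pi_{k+1}$ and $\{\pi_k,\pi_{k+1}\} \notin E$.

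Now I would exploit that $\pi_n$ is always a sink, and that $\{\pi_{n-1},\pi_n\} \notin E$ forces $\pi_{n-1}$ to also be a sink (since $\pi_{n-1}$'s only possible later out-neighbor in $\pi$ is $\pi_n$). Hence
\[
[M_{(1^{n-2},2)}] S_G \;=\; \sum_{\{u<v\}\subseteq \mathrm{Sinks}(G)} L(G-u-v) \;>\; 0.
\]
Dually, $[M_{(2,1^{n-2})}] S_G$ counts linear extensions starting with two sources in ascending label order. If $G$ has at most one source, this count is $0$, and symmetry already fails at this pair of rearrangements. Combined with the reduction step, this handles the case in which $G$ has at least two sinks and at most one source, or (symmetrically) at least two sources and at most one sink.

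The main obstacle is the remaining case in which $G$ has \emph{both} at least two sinks and at least two sources, because then the first and last ascending-non-edge counts can be equal (as happens, for instance, in $K_{2,2}$). Here I would exhibit an interior index $k \in \{2,\ldots,n-2\}$ such that in \emph{every} linear extension of $G$ the pair $(\pi_k,\pi_{k+1})$ is either a descent or an edge, making $[M_{(1^{k-1},2,1^{n-k-1})}] S_G = 0$ while $[M_{(1^{n-2},2)}] S_G > 0$. Producing this bottleneck position from connectedness is the hardest step: the idea is that multiple sinks together with connectedness force every topological sort of $G$ to pass, at some fixed position, through a "waist" where the pair of consecutive vertices must be adjacent (either via a cut vertex whose adjacent slots are forced edges, or via a bipartite waist that separates the ancestors of the sinks from the sinks themselves). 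Once such a $k$ is identified, asymmetry follows from the $M$-basis comparison above.
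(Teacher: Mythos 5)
Your reduction to at least two sinks, your identification of the top coefficient $[q^{|E|}]X_G(x;q)$ with the strict $P$-partition generating function, and your count $[M_{(1^{k-1},2,1^{n-k-1})}]S_G = \#\{\pi \in \mathcal{L}(G) : \pi_k<\pi_{k+1},\ \{\pi_k,\pi_{k+1}\}\notin E\}$ are all correct, and your easy case (at least two sinks, exactly one source) is a fine variant of the paper's argument that graphs with unequally many sources and sinks fail symmetry. But the case you defer to the end --- at least two sources \emph{and} at least two sinks --- is the actual content of the theorem, and the ``waist'' you propose there does not exist in general. Concretely, take $n=5$ with edges $1\to 4$, $2\to 4$, $2\to 3$, $4\to 5$: this graph is connected, has sources $\{1,2\}$ and sinks $\{3,5\}$, and largest antichain of size $2$. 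Its seven linear extensions include $2,1,3,4,5$ (ascending non-edge $(1,3)$ at position $2$), $1,2,3,4,5$ (ascending non-edge $(3,4)$ at position $3$), $1,2,4,3,5$ (ascending non-edge $(3,5)$ at position $4$), and $1,2,3,4,5$ again at position $1$; so \emph{every} position $k$ has $[M_{(1^{k-1},2,1^{n-k-1})}]S_G>0$, and no zero-versus-positive comparison within the partition $(2,1^{n-2})$ is available. So the claim that connectedness plus multiple sinks forces a position where every topological sort has a descent-or-edge is false, and with it the proposed proof of the crucial case collapses.

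What actually happens in such examples (and what the paper proves) is a strict inequality between two \emph{positive} coefficients: with $a$ the common number of sources and sinks, the paper first disposes of large antichains, then uses Dilworth's theorem to fix a decomposition into $a$ chains and constructs an explicit injective-but-not-surjective map from the max-ascent colorings of weight $(1^{k},a,1^{n-k-a})$ to those of weight $(a,1^{n-a})$, where $k$ is determined by the first vertex lying above two sources. In the example above this gives $[q^{|E|}M_{(1,2,1,1)}]=1 < 3 = [q^{|E|}M_{(2,1,1,1)}]$, which is exactly the kind of statement your outline cannot reach. To repair your approach you would need to replace the hoped-for vanishing coefficient by a genuine counting argument (an injection or a sign-reversing pairing) comparing two rearranged weights, which is essentially the paper's $\varphi_{G,R}$ construction; as written, the proposal has a genuine gap at its central step.
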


As a corollary, we obtain an answer to an open question posed by Aliniaeifard, Asgarli, Esipova, Shelburne, van Willigenburg, and Whitehead McGinley \cite{Steph} of which oriented trees have a symmetric CQF.

\begin{cor}
    The CQF of a tree is symmetric if and only if the tree is a directed path.
\end{cor}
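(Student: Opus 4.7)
The plan is to invoke Theorem \ref{thm:dag} as a black box and reduce the tree case to elementary combinatorics about leaves and local extrema of the labeling $1,\ldots,n$ on the vertices.

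First, I would dispatch the ``if'' direction. When $T$ is a directed path, its vertices can be listed in path order as $a_1, a_2, \ldots, a_n$ with labels monotone along the sequence, so as a labeled graph $T$ is (up to reversing the sequence) the standard oriented path on $\{1,\ldots,n\}$. The CQF of this graph is already known to be symmetric, for instance as a special case of the Shareshian--Wachs result that the CQF of any unit interval graph is symmetric.

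For the converse, suppose $T$ is a tree whose CQF is symmetric. Since $T$ is connected and the labels $1,\ldots,n$ induce an acyclic orientation on its edges (each edge directed from smaller to larger label), Theorem \ref{thm:dag} forces $T$ to have exactly one source and exactly one sink. I would then split into two cases based on the unlabeled shape of $T$. In Case A, where $T$ has a vertex of degree at least $3$, a standard degree-sum argument for trees shows that $T$ has at least three leaves, and each leaf is automatically either a source (its unique neighbor has a larger label) or a sink (neighbor has a smaller label). By pigeonhole, $T$ then has at least two sources or at least two sinks, contradicting Theorem \ref{thm:dag}. In Case B, $T$ is itself a path $a_1 - a_2 - \cdots - a_n$. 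The two endpoints are each a source or a sink, and if the label sequence $a_1, \ldots, a_n$ is not monotone then some interior $a_i$ is a local minimum or local maximum of the sequence, producing an additional source or sink beyond the two endpoints. This again forces at least two sources or at least two sinks, contradicting Theorem \ref{thm:dag}.

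The main (mild) obstacle is the bookkeeping in Case B: one must check that even a single non-monotone ``turn'' in the permutation $a_1, \ldots, a_n$ genuinely produces a third source-or-sink beyond the two endpoints, so that the count of sources plus sinks exceeds $2$ and hence one of the two individual counts is at least $2$. Once that is in hand, the two cases together rule out every tree except the directed path, and Theorem \ref{thm:dag} finishes the argument.
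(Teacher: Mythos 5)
Your proof is correct. The ``if'' direction is exactly the paper's: a directed path is a natural unit interval graph, so symmetry follows from Shareshian--Wachs. For the converse, you and the paper both start from Theorem \ref{thm:dag}, which forces a tree with symmetric CQF to have exactly one source and exactly one sink; the difference is in how you conclude that such a tree is a directed path. The paper invokes Corollary \ref{cor:dpath}, which says that any connected DAG with symmetric CQF admits a directed Hamiltonian path from its source to its sink, and that corollary in turn rests on Lemma \ref{lem:antichain} together with Dilworth's theorem; a tree containing a Hamiltonian path is a path, and the orientation along it must be consistent. You instead argue elementarily and tree-specifically: a vertex of degree at least $3$ yields at least three leaves, each of which is a source or a sink, so pigeonhole gives two of one kind; and if the underlying graph is a path whose label sequence is not monotone, an interior local extremum is a third source-or-sink. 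Both case analyses are sound (your worry about Case B is unfounded: distinct labels with a sign change in consecutive differences always produce an interior local minimum or maximum). What the paper's route buys is generality -- Corollary \ref{cor:dpath} applies to all connected DAGs and also powers the cycle classification in Corollary \ref{cor:cycle} -- so the tree corollary becomes a two-line consequence of already-developed machinery; what your route buys is self-containedness, using only Theorem \ref{thm:dag} and avoiding the antichain lemma and Dilworth's theorem entirely, at the cost of an argument that does not extend beyond trees.
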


Our final main result is as follows.  Define a \textbf{$k$-mountain} to be a complete $k$-clique, and a \textbf{bottomless $k+1$-mountain} to be a $k+1$-clique with one edge removed.  To make a \textbf{mixed mountain graph}, we string together any sequence of $k$-mountains and bottomless $k+1$-mountains, where each pair of adjacent mountains shares a single vertex, and if it is a bottomless mountain the shared vertex is one of the vertices of the missing edge.  Then we connect the final endpoints of the first and last mountains by a single extra edge, as shown in Figure \ref{fig:mixed-mountain}.  We orient all edges from left to right, corresponding to the left to right labeling of the vertices as shown with $1,2,3,\ldots,n$.

\begin{figure}
    \centering
    \includegraphics{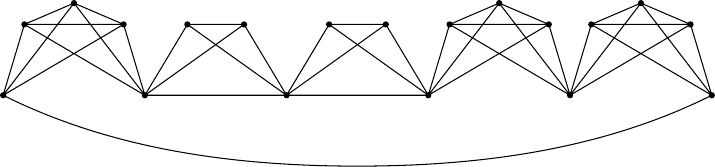}
    \caption{A mixed mountain graph for $k=4$.}
    \label{fig:mixed-mountain}
\end{figure}

\begin{thm}\label{thm:mountains}
  The CQF of every mixed mountain graph is symmetric.
\end{thm}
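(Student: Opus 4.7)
The plan is to show that $X_G(x;q)$ is invariant under the transposition of variables $x_c \leftrightarrow x_{c+1}$ for every $c \geq 1$. Since $X_G(x;q)$ is manifestly quasisymmetric, such invariance is equivalent to full symmetry. Equivalently, for each $c$ I would construct an ascent-preserving involution $\phi_c$ on proper colorings of $G$ that swaps $|\kappa^{-1}(c)|$ and $|\kappa^{-1}(c+1)|$ while fixing all other color multiplicities.

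Fix $c$ and a proper coloring $\kappa$, and let $H = H_c(\kappa)$ be the subgraph of $G$ induced by the vertices colored $c$ or $c+1$. The first step is the structural lemma that every vertex of $H$ has degree at most two, so that $H$ is a disjoint union of paths and cycles. I would prove this by a case analysis using the mountain architecture: within a $k$-mountain (a clique) each color appears at most once; within a bottomless $(k+1)$-mountain a color may repeat only on the two feet, which are non-adjacent and so contribute no $H$-edge; pairwise adjacency of the non-feet, and of each non-foot with the feet, then forces every non-foot vertex to have at most two $H$-neighbors inside its mountain and every foot at most one; shared vertices between adjacent mountains are themselves feet of any bottomless mountain they belong to, contributing at most one $H$-edge per mountain; finally the closing edge contributes at most one further $H$-edge at each of the extreme vertices $1$ and $n$. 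Since cycles and even-vertex paths in $H$ are automatically color-balanced, the imbalance between colors $c$ and $c+1$ is carried entirely by the odd-vertex path components of $H$.

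The plan is then to define $\phi_c$ by acting on the odd path components of $H$. The naive local move --- swap colors $c$ and $c+1$ along a single odd path --- is an involution that swaps the color multiplicities, but it flips the ascent status of every $H$-edge on the path, so in isolation it does not preserve the global ascent count. The main obstacle, and the heart of the argument, is to orchestrate the toggles across components so that the ascent changes cancel. My approach is to exploit the cyclic mountain-chain structure induced by the closing edge to pair up odd components canonically and toggle paired components simultaneously, arranging that the net ascent change is zero. The bulk of the proof is then a finite case analysis based on how each paired component sits relative to the mountain decomposition --- wholly inside one mountain, spanning several mountains through shared vertices, or meeting the closing edge at vertex $1$ or $n$. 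Once ascent preservation is verified, the facts that $\phi_c$ is an involution and realizes the desired color swap are immediate, completing the proof.
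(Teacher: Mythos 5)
Your overall frame (adjacent-transposition involutions $\phi_c$, plus the observation that the $(c,c+1)$-colored subgraph has maximum degree at most two) matches the paper's setup, and for components avoiding the closing edge the standard Shareshian--Wachs toggle already suffices; in fact your worry there is misplaced: an odd-vertex path whose vertices occur in increasing label order has an even number of edges whose ascent statuses alternate, so toggling it preserves the ascent count \emph{on its own}, and no cross-component cancellation is needed (this is exactly the paper's Proposition on the set $K_{a,a+1}(G)$). The genuine difficulty, which your proposal does not resolve, is the case where the $(c,c+1)$-colored subgraph uses the closing (bottom) edge, i.e.\ contains a component wrapping around through $v_n$ and $v_1$, or is the full cycle through the lower vertices. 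For a wrap-around component the alternation of ascents breaks at the closing edge, toggling genuinely changes the ascent count, and your proposed remedy --- canonically pair odd components and toggle paired components simultaneously --- cannot work in general, because such a coloring may have a single odd component (the wrap-around one) and nothing to pair it against. Any correct map must send such a coloring to a genuinely different coloring, not act component-by-component within the same coloring; deferring this to a ``finite case analysis'' is deferring the heart of the theorem.

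This is precisely where the paper's proof does its real work. It splits colorings into those whose $(a,a+1)$-subgraph avoids the bottom edge (handled as you do) and those that use it, and for the latter builds a global ascent-preserving bijection by composing: a color-cycling map $\cycle$ that rotates the content and reorders upper vertices within cliques; a $\reflect$ map given by horizontal reflection together with complementation of the colors $\ge 3$ and a repositioning rule for the colors $1,2$; a $\swap$ map exchanging an adjacent $k$-clique and bottomless $(k+1)$-clique (needed for mixed mountain graphs because $\reflect$ lands in the reversed graph, and also to reduce to the sorted configuration $M_{p,k,m}$); and finally the unit-interval involutions applied along a fixed reduced word to re-sort the content. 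Each of these requires its own nontrivial ascent-preservation argument. Your sketch contains neither this machinery nor a substitute for it, so as written the symmetry in the bottom-edge case --- the only hard case --- remains unproved.
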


We have used Sage \cite{sage} to test all labeled, connected graphs up to $8$ vertices, and have found that the mixed mountain graphs and unit interval orders describe all symmetric CQF's for these numbers. The mixed mountain graphs whose CQF we were able to compute in Sage also all had $e$-positive CQF's.  We generally ask whether there are more families of graphs for larger vertices that have a symmetric CQF.

\subsection{Outline}

In Section \ref{sec:background} we establish background and notation on quasisymmetric functions and CQF's. We then prove Theorem \ref{thm:qsym-sym} in Section \ref{sec:qsym-products}.  In Section \ref{sec:dag}, we prove Theorem \ref{thm:dag} as well as provide a number of general results on when a CQF is nonsymmetric in general. We prove Theorem \ref{thm:mountains} in Section \ref{sec:mountains}, starting with the case of ordinary $k$-mountain graphs with no bottomless $k+1$-cliques, and then generalizing to the full case.

\subsection{Acknowledgements}

We thank Eugene Gorsky, Kevin Liu, and Richard Stanley for helpful conversations pertaining to this work.

\section{Notation and Background}\label{sec:background}

We briefly establish some notation on the combinatorial objects at play.

\subsection{Quasisymmetric and symmetric functions}\label{sec:symmetric-functions-background}

We first recall quasisymmetric functions, in both finite and infinitely many variables.

\begin{definition}
A \textbf{quasisymmetric function} (over the coefficient ring $R$) in $x_1,\ldots,x_n$ is a polynomial $f(x_1,\ldots,x_n)\in R[x_1,\ldots,x_n]$ such that for any composition $\alpha=(\alpha_1,\ldots,\alpha_n)$ with at most $n$ parts, the coefficient of $x_1^{\alpha_1}\cdots x_m^{\alpha_m}$ is the same as that of $x_{i_1}^{\alpha_1}\cdots x_{i_m}^{\alpha_m}$ for any $$1\le i_1<i_2<\cdots <i_m\le n.$$ 

We write $\QSym_R[x_1,\ldots,x_n]$ for the ring of quasisymmetric functions in $n$ variables.
\end{definition}

\begin{definition}
    We write $$\QSym_R=\QSym_R[x_1,x_2,\ldots]$$ for the inverse limit, under the maps $\QSym_R[x_1,\ldots,x_n]\to \QSym_R[x_1,\ldots,x_{n-1}]$ formed by setting $x_n=0$, of the spaces $\QSym_R[x_1,\ldots,x_n]$.  
    
    This is called the ring of quasisymmetric functions over $R$, and can be thought of as the ring of bounded-degree sums of monomials such that the coefficient of $x_{i_1}^{\alpha_1}\cdots x_{i_m}^{\alpha_m}$ for any composition $\alpha$ and any increasing sequence $i_j$ is only dependent on $\alpha$.
\end{definition}

Throughout this paper, we will use $R=\mathbb{Z}$.  There are several natural bases for $\QSym_\mathbb{Z}$, one being the \textbf{monomial quasisymmetric functions} $$M_\alpha=\sum_{i_1<\cdots<i_m}x_{i_1}^{\alpha_1}\cdots x_{i_m}^{\alpha_m}.$$

\begin{definition}
    A function $f\in R[x_1,\ldots,x_n]$ is \textbf{symmetric} if $$f(x_{\pi(1)},\ldots,x_{\pi(n)})=f(x_1,\ldots,x_n)$$ for all permutations $\pi\in S_n$.  We write $\Sym_R[x_1,\ldots,x_n]$ for the ring of symmetric polynomials in $n$ variables over the coefficient ring $R$. 

    As for quasisymmetric functions, we write $\Sym_R=\Sym_R[x_1,x_2,\ldots]$ for the inverse limit over $n$ under the compatibility maps formed by setting $x_n=0$.
\end{definition}

A natural basis of $\Sym_{\mathbb{Z}}$ is the \textbf{monomial symmetric functions}
$$m_\lambda=x_1^{\lambda_1}x_2^{\lambda_2}\cdots x_n^{\lambda_n} +\text{similar terms},$$ where the similar terms consist of all monomials whose multiset of exponents are the parts of the partition $\lambda$.  Here, a partition $\lambda=(\lambda_1,\ldots,\lambda_n)$ differs from a composition $\alpha$, since we require that $\lambda_1\ge \cdots \ge \lambda_n$.

Another important basis of $\Sym_{\mathbb{Z}}$ is the following.

\begin{definition}
The \textbf{elementary symmetric function} $e_n$ is the monomial symmetric function $m_{(1,1,1,\ldots,1)}$ for the partition of length $n$ whose parts are all $1$.  For instance, $e_2=x_1x_2+x_1x_3+x_2x_3+\cdots.$  For a  partition $\lambda=(\lambda_1,\ldots,\lambda_k)$, the basis element $e_\lambda$ is defined to be $e_{\lambda_1}\cdots e_{\lambda_k}$.
\end{definition}

A symmetric function is \textbf{$e$-positive} if its expansion in the $e_\lambda$ basis has all nonnegative integer coefficients. A final important basis is the \textbf{Schur basis} $s_\lambda$; we will not need to define it here, but will occasionally mention the important problem of determining \textbf{Schur positivity}, which is always implied by $e$-positivity.

\subsection{Graph colorings and CQF's}

Throughout this paper we work with vertex-labeled graphs $G$, with vertices labeled $1,2,\ldots,n$, and we write $[n]=\{1,2,\ldots,n\}$.  The labeling automatically induces an acyclic orientation on the edges of $G$, where an edge points from the smaller vertex label to the larger.

\begin{definition}
    A \textbf{proper coloring} of $G$ is a map $\kappa:[n]\to \mathbb{N}$ where $\mathbb{N}=\{1,2,3,\ldots\}$, such that any two adjacent vertices labeled $u,v$ have $\kappa(u)\neq \kappa(v)$.
\end{definition}  

\begin{definition}
An \textbf{ascent} of a proper coloring is an edge from $u$ to $v$ such that $u<v$ and $\kappa(u)<\kappa(v)$.  We write $\asc(\kappa)$ for the number of ascents.    
\end{definition}

\begin{definition}
    The \textbf{weight} of a proper coloring is the tuple $(\kappa^{-1}(1),\kappa^{-1}(2),\kappa^{-1}(3),\ldots)$ where we often truncate to make a finite tuple after the maximum number $n$ used in the proper coloring.  That is, it is the tuple of multiplicities of each color appearing in the coloring.
\end{definition}

For example, a coloring with weight $(5,2,0,3)$ has five vertices colored $1$, two vertices colored $2$, and three vertices colored $4$.  We will often use exponential notation to indicate repeated entries in a weight; for instance, $(1^3,2,1^4)$ is a shorthand representation for the tuple $(1,1,1,2,1,1,1,1)$.

As mentioned in the introduction, we define the \textbf{chromatic quasisymmetric function}, or $\textbf{CQF}$, of $G$ to be $$X_G(x;q)=\sum_\kappa q^{\asc(\kappa)}x_{\kappa(1)}\cdots x_{\kappa(n)}$$ where the sum ranges over all proper colorings $\kappa$ of $G$. Note that the CQF matches the chromatic symmetric function at the specialization $q=1$.

We will often use the following fact.

\begin{remark}
   For a weight tuple $w$, the coefficient of the monomial quasisymmetric function $M_{w}$ is the sum of the monomials $q^{\asc(\kappa)}$ over all proper colorings $\kappa$ of weight $w$.
\end{remark}

\section{Symmetric products of quasisymmetric functions}\label{sec:qsym-products}

In this section, we show that if the product of two quasisymmetric functions $f,g$ is symmetric, then in fact both of $f,g$ are symmetric.

\begin{lem}\label{lem:extend}
    Let $R$ be any ring.  If $f\in R[x_1,\ldots,x_n]$ is irreducible, then it is also irreducible in $R[x_1,\ldots,x_n,x_{n+1},\ldots,x_m]$. 
\end{lem}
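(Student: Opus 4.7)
The plan is to reduce to the case $m = n+1$ by induction on $m - n$, then show directly that in $R[x_1,\ldots,x_{n+1}] = S[x_{n+1}]$, where $S := R[x_1,\ldots,x_n]$, any factorization $f = gh$ with $g, h \in S[x_{n+1}]$ actually lives in $S$. Viewed in the extended ring, $f$ has degree $0$ in the new variable $x_{n+1}$, so this is essentially a degree-comparison argument.

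Concretely, assuming $R$ (and hence $S$) is a commutative integral domain, the absence of zero divisors lets one equate leading coefficients in $x_{n+1}$, yielding
\[
\deg_{x_{n+1}}(g) + \deg_{x_{n+1}}(h) \;=\; \deg_{x_{n+1}}(f) \;=\; 0,
\]
so $g, h \in S$. The irreducibility hypothesis on $f$ in $S$ then forces one of $g, h$ to be a unit in $S$, and since $S$ is a domain the units of $S[x_{n+1}]$ coincide with the units of $S$, so this unit persists in the ambient ring. Iterating $m - n$ times closes the induction.

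The argument is essentially routine; the main subtlety is that the statement as written says ``any ring,'' but the proof genuinely needs at least that $R$ be a commutative integral domain (both for the degree argument and for the identification of units in a polynomial extension). This is harmless because the paper only ever uses $R = \mathbb{Z}$, so any gap in generality is immaterial for the intended application.
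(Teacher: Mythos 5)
Your proof is correct and rests on the same underlying idea as the paper's: a nontrivial factorization of $f$ in $R[x_1,\ldots,x_{n+1}]$ is ruled out by examining degrees in the new variable $x_{n+1}$. The paper executes this by setting $x_{n+1}=0$ and splitting into cases, and its second case (a factor becoming constant at $x_{n+1}=0$) is exactly your degree-additivity step phrased via leading coefficients; your direct appeal to $\deg_{x_{n+1}}(g)+\deg_{x_{n+1}}(h)=\deg_{x_{n+1}}(f)=0$ subsumes both cases at once and is also more careful about the unit bookkeeping, since the paper implicitly treats ``degree at least $1$'' as synonymous with ``nonunit,'' which is only valid over a domain (over $\mathbb{Z}/4$, for instance, $1+2x_{n+1}$ is a unit). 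Your caveat that ``any ring'' should really be ``commutative integral domain'' is apt, and it is not a defect of your argument relative to the paper's: the paper's proof tacitly uses the same hypothesis when it asserts that the product of the top $x_{n+1}$-terms of $g$ and $h$ is nonvanishing. As you note, this is immaterial for the intended application, where the lemma is invoked only for polynomial rings over $R=\mathbb{Z}$.
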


\begin{proof}
    By induction, it suffices to prove the statement for $m=n+1$.  
    
    Now, suppose for contradiction that $f=gh$ in $R[x_1,\ldots,x_n,x_{n+1}]$ where $g,h$ both have degree at least $1$.  Plug in $x_{n+1}=0$ on both sides of the equation.  Since $f$ only depends on $x_1,\ldots,x_n$, the left hand side is still simply $f$.  On the right hand side, if neither $g$ nor $h$ become a constant, then $f$ factors in $R[x_1,\ldots,x_n]$ and we have a contradiction. 
    
    On the other hand, if $g$ or $h$ become a constant at $x_{n+1}=0$, say $g|_{x_{n+1}=0}=c\in R$, then $g=c+x_{n+1}r$ for some nonzero $r\in R[x_1,\ldots,x_{n+1}]$.  Consider both $g$ and $h$ as polynomials in $x_{n+1}$ with coefficients in $R[x_1,\ldots,x_n]$, and consider the highest power of $x_{n+1}$ in each.  The product of these terms gives the highest power of $x_{n+1}$ in $f=gh$, which is nonvanishing, a contradiction since $f$ does not depend on $x_{n+1}$. 
\end{proof}

We now use a theorem of \cite{Hazewinkel} that establishes generators for $\QSym$ using $\lambda$-ring theory.  A \textbf{Lyndon word} is a word $\alpha=\alpha_1,\ldots,\alpha_m$ of positive integers where $\alpha$ is strictly lexicographically smaller than any cycling $\alpha_i,\ldots,\alpha_m,\alpha_1,\ldots,\alpha_{i-1}$.  In \cite{Hazewinkel}, Hazewinkle defines quasisymmetric functions $\lambda_n(M_\alpha)$ for each Lyndon word $\alpha$ and positive integer $n$, of degree $n\sum_i \alpha_i$, as follows:

$$\lambda_n(M_\alpha)=\frac{1}{n!}\det\begin{pmatrix}
   M_\alpha & 1 & 0 & 0 & \cdots & 0 \\
   M_{2\alpha} & M_{\alpha} & 2 & 0 &\cdots & 0 \\
   M_{3\alpha} & M_{2\alpha} & M_{\alpha} & 3 &  \cdots & 0 \\
   \vdots & & & & \ddots & \vdots \\
   M_{n\alpha} & M_{(n-1)\alpha} & M_{(n-2)\alpha} &  M_{(n-3)\alpha} &\cdots & n-1 \\   
\end{pmatrix}$$
where $k\alpha$ is the composition $(k\alpha_1, k\alpha_2, \ldots, k \alpha_m)$.

\begin{thm}[\cite{Hazewinkel}, Theorem 3.1] \label{thm:hazewinkle} We have $\QSym_{\mathbb{Z}}=\mathbb{Z}[\lambda_n(M_\alpha)]$ where $\alpha$ ranges over all Lyndon words with $\gcd(\alpha_i)=1$, and $\lambda_n(M_{1})=e_n$ for all $n$.
\end{thm}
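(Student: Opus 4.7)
The plan is to combine three ingredients: the lambda-ring structure on $\QSym_{\mathbb{Z}}$, a Lyndon-word description of a free lambda generating set, and the general principle that a free lambda-ring is a polynomial ring in the applied lambda operations on the generators. First, I would equip $\QSym_{\mathbb{Z}}$ with its natural lambda-ring structure. The operations $\lambda_n$ can be defined via plethysm, or dually through the Verschiebung maps on the noncommutative symmetric functions; one verifies that they extend the classical lambda operations on $\Sym_{\mathbb{Z}}$ and in particular satisfy $\sum_n \lambda_n(x_1 + x_2 + \cdots) t^n = \prod_i (1 + x_i t)$. Since $M_1 = \sum_i x_i$, this immediately yields the identification $\lambda_n(M_1) = e_n$.

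Next, I would show that the set $S = \{M_\alpha : \alpha \text{ a Lyndon composition with } \gcd(\alpha_i) = 1\}$ generates $\QSym_{\mathbb{Z}}$ freely as a lambda-ring. The combinatorial input is the Chen--Fox--Lyndon factorization: every word factors uniquely as a weakly decreasing concatenation of Lyndon words. Combined with the shuffle-and-deconcatenation Hopf algebra structure on $\QSym$, this yields a polynomial generating set indexed by \emph{all} Lyndon words over $\mathbb{Q}$ (Radford's theorem). Passing to $\mathbb{Z}$ requires restricting to Lyndon words with coprime parts and using the $\lambda_n$'s to recover the ``scaled'' cases: one verifies that $\lambda_n(M_\alpha)$ has a controlled $M$-expansion whose leading term tracks the scaled composition $n\alpha$. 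A Hilbert-series comparison, matching the count of pairs $(n, \alpha)$ of total weight $d$ against the rank of the degree-$d$ piece of $\QSym$, confirms that $S$ is neither redundant nor incomplete.

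The final step is formal: a free commutative lambda-ring on a set $T$ of generators is, as a commutative ring, the polynomial ring $\mathbb{Z}[\lambda_n(t) : t \in T,\ n \geq 1]$. Applying this to $T = S$ from the previous step produces the desired description $\QSym_{\mathbb{Z}} = \mathbb{Z}[\lambda_n(M_\alpha)]$. The main obstacle is the integrality in the middle step. Over $\mathbb{Q}$ the statement follows from standard shuffle-algebra arguments together with Radford's Lyndon basis, but over $\mathbb{Z}$ one must rule out subtle integral relations among the proposed generators; this is exactly the setting in which the lambda-ring framework is indispensable, since the $\lambda_n$ operations absorb the divisibility obstructions that would otherwise force one to invert integers when handling Lyndon words whose parts share a common factor.
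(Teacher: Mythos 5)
This statement is not proved in the paper at all: it is quoted from Hazewinkel \cite{Hazewinkel} (Theorem 3.1, the resolution of the Ditters conjecture), so there is no internal proof to compare against. On its own terms, your outline does gesture at the same circle of ideas Hazewinkel actually uses --- the $\lambda$-ring structure on $\QSym_{\mathbb{Z}}$, generators indexed by Lyndon compositions with coprime parts, and the identification $\lambda_n(M_1)=e_n$ --- but it has a genuine gap at its center.

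The gap is that your middle step, ``show $\{M_\alpha\}$ generates $\QSym_{\mathbb{Z}}$ freely as a $\lambda$-ring,'' is not a reduction of the theorem but a restatement of it. Since the free (special) $\lambda$-ring on a set $T$ has underlying ring $\mathbb{Z}[\lambda_n(t):t\in T,\ n\ge 1]$, freeness as a $\lambda$-ring on the coprime Lyndon $M_\alpha$'s is literally equivalent to the polynomial description you are trying to prove, so the ``formal'' final step buys nothing. The substantive content is exactly the integral claim you dispatch with ``one verifies that $\lambda_n(M_\alpha)$ has a controlled $M$-expansion whose leading term tracks $n\alpha$'': establishing such a triangularity statement (that $\lambda_n(M_\alpha)$ equals $\pm M_{n\alpha}$ up to decomposables and suitably smaller monomial terms, for a carefully chosen order on compositions) is the technical heart of Hazewinkel's proof and cannot be waved through; the Ditters conjecture resisted proof for decades precisely because this step is delicate. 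Two smaller problems: over $\mathbb{Q}$, $\QSym$ carries the quasi-shuffle rather than the shuffle product, so invoking Radford's theorem requires Hoffman's exponential isomorphism, a rational construction that yields no integral information; and a Hilbert-series comparison only upgrades a generating set to a free one once integral generation is already known (a surjection between free $\mathbb{Z}$-modules of equal finite rank is an isomorphism), so your counting argument also rests on the unproved expansion lemma rather than substituting for it.
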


\begin{cor}\label{cor:generators}
   We have $\QSym_{\mathbb{Z}}=\mathbb{Z}[e_1,e_2,\ldots,f_1,f_2,\ldots]$ for some functions $f_i$ such that there are finitely many $f$'s of each degree.
\end{cor}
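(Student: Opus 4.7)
The plan is to extract the statement directly from Theorem \ref{thm:hazewinkle} by splitting and relabeling the Hazewinkel generators. First, I would partition the generating set $\{\lambda_n(M_\alpha)\}$ into two subfamilies: those arising from the Lyndon word $\alpha=(1)$, which by Hazewinkel's identification yield precisely the elementary symmetric functions $\lambda_n(M_{(1)}) = e_n$ for each $n \geq 1$, and all remaining generators $\lambda_n(M_\alpha)$ with $\alpha$ a Lyndon word satisfying $\gcd(\alpha_i)=1$ and $\alpha \neq (1)$. Enumerate this second subfamily as $f_1, f_2, \ldots$ in some convenient order, for instance by nondecreasing degree with ties broken arbitrarily. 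With this relabeling, the equality $\QSym_{\mathbb{Z}} = \mathbb{Z}[e_1, e_2, \ldots, f_1, f_2, \ldots]$ is immediate from Theorem \ref{thm:hazewinkle}.

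The only remaining claim is that each degree $d$ contains finitely many $f_i$. Since $\lambda_n(M_\alpha)$ has degree $n \cdot |\alpha|$ where $|\alpha| = \sum_i \alpha_i$, the generators of degree exactly $d$ correspond to pairs $(n,\alpha)$ with $n$ a positive divisor of $d$ and $\alpha$ a Lyndon word of weight $d/n$ with $\gcd(\alpha_i)=1$. For each divisor $n$ of $d$ there are only finitely many such Lyndon words (bounded above, for instance, by the $2^{d/n-1}$ compositions of $d/n$ into positive parts), and $d$ has only finitely many divisors; hence the total count of generators of degree $d$ is finite.

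This is essentially a bookkeeping corollary, so there is no real obstacle. The only point requiring any care is the elementary observation that the Hazewinkel indexing set has finite fibers over the degree grading, which follows from the formula $\deg \lambda_n(M_\alpha) = n|\alpha|$ together with the finiteness of positive-integer compositions of a fixed weight.
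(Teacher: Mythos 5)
Your proposal is correct and follows essentially the same route as the paper: split off the $\alpha=(1)$ generators as the $e_n$'s, and note that a generator $\lambda_n(M_\alpha)$ of degree $K$ forces $|\alpha|=K/n$, so only finitely many pairs $(n,\alpha)$ occur in each degree. Your added detail (divisors of $d$ and the bound by compositions of $d/n$) just spells out the finiteness the paper states in one line.
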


\begin{proof}
    The generators $\lambda_n(M_\alpha)$ give the elementary symmetric functions for $\alpha=(1)$, and there are a finite number of generators $\lambda_n(M_\alpha)$ of a given degree $K$, since for it to have degree $K$ we must have $|\alpha|=K/n$.
\end{proof}

\begin{QsymTheorem}
    Suppose $f\cdot g=h$ where $f$ and $g$ are quasisymmetric and $h$ is symmetric.  Then $f$ and $g$ are in fact symmetric.
\end{QsymTheorem}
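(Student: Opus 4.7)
The plan is to combine Corollary \ref{cor:generators}, which realizes $\QSym_{\mathbb{Z}}$ as the polynomial ring $\mathbb{Z}[e_1,e_2,\ldots,f_1,f_2,\ldots]$, with Lemma \ref{lem:extend}, and then invoke unique factorization inside a finite-variable polynomial subring.

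The first step is to reduce to a finite-variable setting. Since every element of $\QSym_{\mathbb{Z}}$ has bounded degree, and Corollary \ref{cor:generators} guarantees only finitely many generators of each degree, I can pick $N,M$ so that
\[
f,\ g,\ h := fg \ \in\ A := \mathbb{Z}[e_1,\ldots,e_N,f_1,\ldots,f_M],
\]
and moreover $h \in B := \mathbb{Z}[e_1,\ldots,e_N]$ because $h$ is symmetric and $\Sym_{\mathbb{Z}} = \mathbb{Z}[e_1,e_2,\ldots]$. By Theorem \ref{thm:hazewinkle}, both $A$ and $B$ are honest polynomial rings over $\mathbb{Z}$, hence UFDs, whose units are just $\pm 1$.

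Next I would factor $h$ into irreducibles in the smaller UFD $B$, writing $h = \pm\, p_1^{a_1}\cdots p_k^{a_k}$. By iterated application of Lemma \ref{lem:extend}, each $p_i$ remains irreducible in the larger polynomial ring $A$. Then $fg = \pm\, p_1^{a_1}\cdots p_k^{a_k}$ presents two irreducible factorizations of the same element in the UFD $A$, so by uniqueness $f$ and $g$ must each equal $\pm$ a product of powers of the $p_i$'s. In particular $f,g \in B \subseteq \Sym_{\mathbb{Z}}$, as desired.

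The main obstacle is the transfer of irreducibility in the middle step: without Lemma \ref{lem:extend}, a $p_i$ irreducible in $B$ could conceivably factor further in $A$, and then $f$ or $g$ might absorb a genuinely nonsymmetric factor. Lemma \ref{lem:extend} rules this out, reducing the theorem to routine UFD bookkeeping on top of Hazewinkel's polynomial-ring presentation of $\QSym_{\mathbb{Z}}$.
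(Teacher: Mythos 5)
Your proof is correct, and it is built from the same ingredients as the paper's (Hazewinkel's polynomial presentation of $\QSym_{\mathbb{Z}}$, Lemma \ref{lem:extend} to push irreducibility from $\mathbb{Z}[e_1,\ldots,e_N]$ into the larger polynomial ring, and unique factorization in a finitely generated subring), but it is organized differently: the paper runs a strong induction on $\deg h$ with two cases, using the irreducibility-transfer argument only when $h$ is irreducible in $\Sym$, and invoking the induction hypothesis to see that the irreducible $\QSym$-factors of a nontrivial symmetric factorization $h=uv$ are themselves symmetric. You instead factor $h$ once and for all into irreducibles inside $B=\mathbb{Z}[e_1,\ldots,e_N]$, transfer every irreducible factor to $A=\mathbb{Z}[e_1,\ldots,e_N,f_1,\ldots,f_M]$ via Lemma \ref{lem:extend}, and then read off from UFD uniqueness (units being $\pm1$) that $f$ and $g$ are $\pm$ products of these symmetric primes, hence lie in $B\subseteq\Sym_{\mathbb{Z}}$. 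This collapses the paper's two cases and removes the induction entirely, which is a genuine streamlining; the only points to make explicit are the ones you gesture at, namely that since the Hazewinkel generators are homogeneous of positive degree and algebraically independent, every element of degree at most $\deg h$ lies in the subring generated by the finitely many generators of degree at most $\deg h$ (and $\deg f,\deg g\le\deg h$ because $\QSym$ is a domain), so $N,M$ can indeed be chosen as claimed. Like the paper, you tacitly assume $h\neq 0$, which is the only degenerate case either argument needs to exclude.
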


\begin{proof}
 We induct on the degree of $h$.  If $h$ has degree $0$, then $f$ and $g$ are also constants and are therefore symmetric.

 Now, suppose $h$ has degree $d$, and assume for strong induction that the statement is true for all degrees less than $d$.  We consider two cases.

 \textbf{Case 1.} Suppose $h$ is irreducible in $\Sym=\mathbb{Z}[e_1,e_2,\ldots]$.  Then it is irreducible over $\mathbb{Z}[e_1,\ldots,e_d]$ because $h$ has degree $d$.  Thus, by Lemma \ref{lem:extend}, $h$ is irreducible in $\mathbb{Z}[e_1,\ldots,e_d,f_1,\ldots,f_N]$ where $f_1,\ldots,f_N$ are the generators $\lambda_n(M_\alpha)$ other than the elementary symmetric functions that have degree $\le d$ (see Corollary \ref{cor:generators}).  It follows from Theorem \ref{thm:hazewinkle} that $h$ is irreducible in $\QSym$, and so the factorization does not occur.

 \textbf{Case 2.}  Suppose $h$ factors in $\Sym$ as $h=u\cdot v$ where $u,v\in \Sym$ both have lower degree than $d$.  Then by Theorem \ref{thm:hazewinkle}, we know $\QSym$ is a unique factorization domain (since there are a finite number of generators of each degree, so any given polynomial lies in a finitely generated ring that has unique factorization).  Thus we can consider the unique factorizations   $$u=a_1a_2\cdots a_r\hspace{2cm} v=b_1b_2\cdots b_s$$ in $\QSym$.  By the induction hypothesis, all of $a_1,\ldots,a_r$ and $b_1,\ldots,b_s$ are symmetric.   Therefore, any factorization of $h=a_1\cdots a_r b_1\cdots b_s$ into a product of two factors $f,g$ must have that $f,g$ are products of disjoint subsets of the $a$ and $b$ factors up to scaling by a unit.  Thus $f$ and $g$ are both symmetric as well, as desired.
\end{proof}

\begin{remark}
    Note that Theorem~\ref{thm:qsym-sym} does not hold when restricted to finitely many variables. For instance, the monomials $x_1^nx_2^{n-1}\ldots x_{n}^{1}$ and $x_1^{1}x_2^2\ldots x_{n}^{n}$ are both elements of $\QSym[x_1,\ldots,x_n]$ and not of $\Sym[x_1,\ldots,x_n]$. However, their product $(x_1x_2\ldots x_n)^{n+1}$ is an element of $\Sym[x_1,\ldots,x_n]$.
\end{remark}

\begin{cor}
   Suppose $G$ is a graph with multiple connected components $G_1,\ldots,G_k$.  Then its chromatic quasisymmetric function $X_G(x;q)$ is symmetric if and only if each $X_{G_i}(x;q)$ is symmetric.
\end{cor}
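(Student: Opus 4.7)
The plan is to derive this corollary directly from Theorem \ref{thm:qsym-sym} together with the multiplicativity of the CQF under disjoint union. First I would recall that for $G$ with connected components $G_1,\ldots,G_k$, a proper coloring of $G$ is exactly a tuple of independent proper colorings $\kappa_i$ of each $G_i$, and since no edge crosses between components, $\asc(\kappa)=\sum_i \asc(\kappa_i)$. Summing $q^{\asc(\kappa)}x_{\kappa(1)}\cdots x_{\kappa(n)}$ therefore factors as a product, giving $X_G(x;q)=\prod_{i=1}^k X_{G_i}(x;q)$. The forward direction of the corollary is then immediate, since symmetric functions form a subring of quasisymmetric functions and hence any product of symmetric CQFs is symmetric.

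For the reverse direction, I would induct on the number of components $k$. The case $k=1$ is vacuous. For $k\ge 2$, I group the factorization as
\[
X_G(x;q)\;=\;X_{G_1}(x;q)\cdot\bigl(X_{G_2}(x;q)\cdots X_{G_k}(x;q)\bigr),
\]
noting that both factors are quasisymmetric (the right-hand one as a product of quasisymmetric functions). Applying Theorem \ref{thm:qsym-sym} forces each factor to be symmetric, and then the inductive hypothesis applied to the graph with components $G_2,\ldots,G_k$ (whose CQF is the now-known-symmetric right-hand factor) yields that every $X_{G_i}(x;q)$ is symmetric.

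The main subtlety — and the nearest thing to an obstacle — is that Theorem \ref{thm:qsym-sym} is stated for $\QSym_{\mathbb{Z}}$, whereas the CQFs naturally live in $\QSym_{\mathbb{Z}[q]}$ (quasisymmetric in $x$ with polynomial coefficients in $q$). However, Hazewinkel's theorem (Theorem \ref{thm:hazewinkle}) presents $\QSym$ as a polynomial ring in explicit generators, and this presentation base-changes unchanged from $\mathbb{Z}$ to the UFD $\mathbb{Z}[q]$. Since Lemma \ref{lem:extend} is already stated over an arbitrary ring, and the proof of Theorem \ref{thm:qsym-sym} uses only the polynomial ring and UFD structure of $\QSym$, the argument goes through verbatim over $\mathbb{Z}[q]$, so no further work is needed.
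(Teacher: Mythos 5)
Your proof is correct and takes essentially the same route as the paper, which simply notes that $X_G(x;q)=\prod_i X_{G_i}(x;q)$ and invokes Theorem \ref{thm:qsym-sym}. Your additional care --- the induction on the number of components to handle $k>2$ and the observation that the argument for Theorem \ref{thm:qsym-sym} runs verbatim over the coefficient ring $\mathbb{Z}[q]$ --- fills in details the paper leaves implicit rather than constituting a different approach.
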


\begin{proof}
    We have that $X_G(x;q)$ is the product of the functions $X_{G_i}(x;q)$.  The result then follows from Theorem \ref{thm:qsym-sym}.
\end{proof}

We now use the following theorem from \cite{Lam} to obtain a more general result.  We follow their definition of the full ring $K$ of bounded-degree formal sums of monomials in infinitely many variables:

\begin{definition}
    We define $K=\mathbb{Z}[[x_1,x_2,\ldots]]$ to be the ring of formal power series of bounded degree in infinitely many variables $x_i$.
\end{definition}

Lam and Pylyavskyy showed that $K$ is a unique factorization domain, as well as showing the following theorem.

\begin{thm}[\cite{Lam}, Theorem 8.1]\label{thm:Lam}
Suppose $f \in \QSym$ and $f = \prod_i f_i$ is a factorization of $f$ into irreducibles in $K$.
Then $f_i \in \QSym$ for each $i$.
\end{thm}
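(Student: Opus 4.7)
The plan is to exploit the UFD structure of $K$ together with a family of endomorphisms that characterize $\QSym$ internally.  For each $k\ge 0$, let $\tau_k\colon K\to K$ be the ring homomorphism defined by $\tau_k(x_i)=x_i$ for $i\le k$ and $\tau_k(x_i)=x_{i+1}$ for $i>k$, so that its image is the subring $K'\subset K$ in which $x_{k+1}$ is absent.  A coefficient-tracking argument yields the key characterization: $g\in\QSym$ if and only if $\tau_k(g)=g|_{x_{k+1}=0}$ for every $k\ge 0$.  The forward direction is immediate from the definition of $\QSym$, and the converse follows because iterating the $\tau_k$'s transports any strictly increasing $m$-tuple of positive integers to any other, forcing the corresponding coefficients of $g$ to agree.

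Next I would verify that each $\tau_k$ preserves $K$-irreducibility.  Given $g$ irreducible in $K$ and a would-be factorization $\tau_k(g)=ab$ with $a,b$ non-units, setting $x_{k+1}=0$ yields a factorization of $\tau_k(g)$ inside $K'$, which pulls back through the isomorphism $\tau_k\colon K\to K'$ to a factorization of $g$; irreducibility forces one of $a|_{x_{k+1}=0}, b|_{x_{k+1}=0}$ to be $\pm 1$.  Since units of $K$ are only $\pm 1$ (because $K$ is bounded-degree), together with the degree identity $\deg(a)+\deg(b)=\deg(\tau_k(g))=\deg(g)$, one of $a,b$ must itself be $\pm 1$, a contradiction.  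Applying $\tau_k$ to $f=\prod_i f_i$ now produces two factorizations of $\tau_k(f)=f|_{x_{k+1}=0}$: the $K$-irreducible factorization $\prod_i\tau_k(f_i)$, and the factorization $\prod_i f_i|_{x_{k+1}=0}$.  Unique factorization in $K$ matches these multisets of irreducibles up to $\pm 1$, and a total-degree comparison gives $\deg(f_i|_{x_{k+1}=0})=\deg(f_i)$ for every $i$, so no factor loses top-degree content under specialization.

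The main obstacle is then showing that the pairing is \emph{indexwise}: for each $i$, $\tau_k(f_i)=\pm f_i|_{x_{k+1}=0}$, rather than merely $\tau_k(f_i)=\pm f_{\sigma(i)}|_{x_{k+1}=0}$ for some permutation $\sigma$.  My plan here is to exploit additional invariants of the irreducibles, notably the smallest variable index $\mu(h)$ appearing in $h$: $\mu(\tau_k(f_i))$ equals $\mu(f_i)$ or $\mu(f_i)+1$ depending on whether $\mu(f_i)\le k$, while $\mu$ of any irreducible factor of $f_j|_{x_{k+1}=0}$ is at least $\mu(f_j)$.  Varying $k$ across the thresholds $\mu(f_i)-1, \mu(f_i), \mu(f_i)+1, \ldots$ in combination with the degree information should pin down the permutation as the identity, giving $\tau_k(f_i)=f_i|_{x_{k+1}=0}$ for all $k$ and hence $f_i\in\QSym$.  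This indexwise matching is the most delicate step; a cleaner proof may follow from the multifundamental Hopf-algebraic framework used in Lam and Pylyavskyy's original argument.
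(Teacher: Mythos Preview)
The paper does not prove this theorem; it is cited from Lam--Pylyavskyy as an external input, so there is no in-paper argument to compare your attempt against.

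On its own merits, your approach has a genuine gap precisely where you flag it. The characterization of $\QSym$ via the shift maps $\tau_k$ is correct, and your proof that $\tau_k$ preserves $K$-irreducibility is sound. But two problems remain in the matching step. First, you implicitly treat $\prod_i f_i|_{x_{k+1}=0}$ as a factorization into irreducibles when you write that unique factorization ``matches these multisets of irreducibles''; yet nothing you have proved prevents some $f_i|_{x_{k+1}=0}$ from factoring further in $K$, and the degree equality $\deg(f_i|_{x_{k+1}=0})=\deg(f_i)$ does not rule this out. Second, even granting irreducibility on both sides, your proposed invariant $\mu(h)$ cannot in general distinguish the factors: nothing prevents every $f_i$ from involving $x_1$, in which case all $\mu(f_i)=1$, the thresholds you propose to vary $k$ across collapse, and no new information emerges to pin down the permutation $\sigma_k$. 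Without a mechanism that forces $\sigma_k=\mathrm{id}$ for every $k$ (or that bypasses the permutation issue entirely, as the Hopf-algebraic argument you allude to presumably does), the conclusion $\tau_k(f_i)=\pm f_i|_{x_{k+1}=0}$ does not follow, and the proof is incomplete.
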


We can therefore use this and our results above to conclude the following.

\begin{cor}\label{cor:factors}
  If $h$ is a symmetric function and factors as $h=f\cdot g$ where $f,g$ are both arbitrary power series in $K$, then $f$ and $g$ are both symmetric functions.    
\end{cor}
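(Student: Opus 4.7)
The plan is to combine Theorem \ref{thm:Lam} and the unique factorization property of $K$ (both from \cite{Lam}) with our Theorem \ref{thm:qsym-sym}. First, since $K$ is a UFD, I would factor $h$ into irreducibles in $K$, writing $h = u \cdot p_1 \cdots p_k$ where $u$ is a unit and each $p_i$ is irreducible in $K$. Because $h \in \Sym \subseteq \QSym$, Theorem \ref{thm:Lam} guarantees that each $p_i$ already lies in $\QSym$.

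Next, I would check that the only units of $K$ are $\pm 1$: any unit has constant term $\pm 1$, and if a unit is written as $\pm 1 + u_+$ with $u_+$ consisting of positive-degree terms, then the formal inverse $\pm(1 \mp u_+ \pm u_+^2 \mp \cdots)$ would contain monomials of arbitrarily large degree whenever $u_+ \neq 0$, contradicting the bounded-degree requirement for membership in $K$. Combined with uniqueness of factorization in $K$, this forces any factorization $h = f \cdot g$ to take the form $f = \pm \prod_{i \in S} p_i$ and $g = \pm \prod_{i \notin S} p_i$ for some $S \subseteq [k]$. In particular, both $f$ and $g$ belong to $\QSym$.

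Finally, I would apply Theorem \ref{thm:qsym-sym} to the equation $f \cdot g = h$: since $f, g \in \QSym$ and $h \in \Sym$, the theorem concludes that both $f$ and $g$ are symmetric, as desired. The main obstacle is the unit analysis in the middle step — if $K$ admitted nontrivial units $u$, one would need to verify separately that a product $u \cdot q$ with $q \in \QSym$ remains quasisymmetric, which can fail in general. Fortunately, the bounded-degree hypothesis baked into the definition of $K$ is exactly what rules out such units and makes the argument go through cleanly.
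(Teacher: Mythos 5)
Your proposal is correct and follows essentially the same route as the paper: pass through Lam--Pylyavskyy's Theorem~\ref{thm:Lam} to get $f,g\in\QSym$, then apply Theorem~\ref{thm:qsym-sym}. The only difference is that you explicitly spell out the unique-factorization and unit bookkeeping in $K$ needed to go from irreducible factors to the arbitrary factorization $h=f\cdot g$, details the paper's one-line proof leaves implicit.
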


\begin{proof}
    By Theorem \ref{thm:Lam}, we find that $f$ and $g$ must be quasisysmmetric, and by Theorem \ref{thm:qsym-sym} it follows that they are symmetric.
\end{proof}

\begin{remark}
    Stanley \cite{StanleyPersonal} noted (via personal communication) an alternative way of proving Theorem \ref{thm:qsym-sym} and Corollary \ref{cor:factors} starting from the fact that $K$ is a unique factorization domain, which we state here.  
    
    Supopse $h=f\cdot g$ where $f$ and $g$ are in $K$, and assume for contradiction that $f$ (or $g$, but without loss of generality we may consider $f$) is not symmetric.  Then $$h=\pi(h) = \pi(f) \cdot \pi(g)$$ for any permutation $\pi\in S_{\mathbb{N}}$.  We claim that since $f$ is not symmetric and has infinitely many variables, we may obtain infinitely many distinct factors of $h$ of the form $\pi(f)$ in this manner.

    Indeed, since $f$ is not symmetric, there is a monomial $cx_1^{\alpha_1}x_2^{\alpha_2}\cdots x_n^{\alpha_n}$ in $f$ whose coefficient $c$ is distinct from that of $dx_{\pi(1)}^{\alpha_1}\cdots x_{\pi(n)}^{\alpha_n}$ for some fixed permutation $\pi$.  If all but finitely many coefficients of the permuted monomials of $x_1^{\alpha_1}x_2^{\alpha_2}\cdots x_n^{\alpha_n}$ are equal to $c$, then infinitely many of the coefficients are equal to $d$; it follows that either $c$ or $d$ is distinct from infinitely many other coefficients of the monomials with exponents $\alpha$. It follows that there are infinitely many permutations that do not fix $f$.
    
    Thus $h$ has infinitely many distinct factors, contradicting unique factorization.
\end{remark}

\begin{remark}
    Theorem \ref{thm:qsym-sym} can also be seen as a consequence of Corollary 6.5.33 in \cite{GrinbergReiner}, which states that $\QSym$ is a polynomial algebra over $\Sym$.
\end{remark}

\section{Nonsymmetry of Certain Families of Graphs}\label{sec:dag}

The main goal of this section is to prove Theorem \ref{thm:dag}, but we start with some general results on conditions for nonsymmetry of chromatic quasisymmetric functions.

\subsection{General results for Directed Acyclic Graphs}\label{sec:nonsym}
Every labeling of a graph $G$ induces a directed acyclic orientation on $G$ by directing every edge from a smaller label to a larger one. Moreover, any directed acyclic graph can be given a labeling that respects the orientation of the edges. Thus, it equivalent to define the chromatic quasisymmetric function on a directed acyclic graph $G$  where given a proper coloring $\kappa$ of $G$, a directed edge $(u,v)$ is said to have an ascent if $\kappa(u) < \kappa(v)$. Given a directed acyclic graph $G$, let $G^{\rev}$ denote the acyclic graph obtained from $G$ by reversing the orientation of all of the edges.

\begin{lem}\label{lem:rev}
Let $G$ be a directed acyclic graph. Then $X_G(x;q)$ is symmetric if and only if $X_{G^{\rev}}(x;q)$ is symmetric.
\end{lem}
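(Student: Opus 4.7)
The plan is to show that reversing the orientation of $G$ simply converts ascents into descents of proper colorings, which induces a clean substitution on the CQF that preserves the property of being symmetric.

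First, I would observe that $G$ and $G^{\rev}$ have identical underlying undirected graphs, so they admit exactly the same set of proper colorings $\kappa$. For any edge $e$ of this common graph and any proper coloring $\kappa$, exactly one of the two possible orientations of $e$ yields an ascent under $\kappa$, since $\kappa$ proper forces $\kappa(u)\neq \kappa(v)$ at the endpoints. Letting $N=|E(G)|$, this gives the identity $\asc_{G^{\rev}}(\kappa)=N-\asc_G(\kappa)$ for every proper $\kappa$.

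Substituting into the definition of the CQF, I expect to obtain
$$X_{G^{\rev}}(x;q) \;=\; q^{N}\,X_G(x;q^{-1}).$$
Writing $X_G(x;q)=\sum_\alpha c_\alpha(q)\,x^\alpha$ with each $c_\alpha(q)\in\mathbb{Z}[q]$, this means the coefficient of $x^\alpha$ in $X_{G^{\rev}}(x;q)$ is $q^{N}c_\alpha(q^{-1})$, i.e.\ the coefficient polynomial of each monomial is simply reversed.

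To finish, I would use the fact that $X_G$ is symmetric if and only if $c_\alpha(q)=c_{\sigma(\alpha)}(q)$ for every composition $\alpha$ and every permutation $\sigma$ of its parts. Since the maps $q\mapsto q^{-1}$ and multiplication by $q^N$ are bijections on $\mathbb{Z}[q,q^{-1}]$, the family of equalities $\{c_\alpha=c_{\sigma(\alpha)}\}$ holds if and only if the corresponding family $\{q^Nc_\alpha(q^{-1})=q^Nc_{\sigma(\alpha)}(q^{-1})\}$ holds, which is precisely symmetry of $X_{G^{\rev}}$. There is essentially no obstacle here: the lemma reduces to the observation that edge reversal corresponds, at the level of the CQF, to the substitution $q\mapsto q^{-1}$ up to the global factor $q^N$, and this operation commutes with the condition of being symmetric in the $x$-variables.
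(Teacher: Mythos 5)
Your proposal is correct and follows essentially the same route as the paper: both observe that $G$ and $G^{\rev}$ share the same proper colorings, that $\asc_{G^{\rev}}(\kappa)=|E|-\asc_G(\kappa)$, and hence $X_{G^{\rev}}(x;q)=q^{|E|}X_G(x;q^{-1})$, from which symmetry is preserved. You simply spell out the final implication (coefficientwise reversal of the $q$-polynomials commutes with the symmetry condition) that the paper leaves implicit.
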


\begin{proof}
As the underlying undirected graph does not change, any proper coloring of $G$ is also a proper coloring of $G^{\rev}$. Any proper coloring $\kappa$ of $G$ with ascent $\asc(\kappa)$ will have ascent $|E| - \asc(\kappa)$ in $G^{\rev}$ where $\lvert E \rvert$ is the number of edges in $G$. Thus, $X_{G^{\rev}}(x;q) = q^{|E|}X_G(x;q^{-1})$ which implies the desired result.
\end{proof}

Any vertex of a directed acyclic graph $G$ that only has edges exiting the vertex is called a \textbf{source}. Likewise, any vertex with only incoming edges is said to be a \textbf{sink}. The following observation is due to Liu \cite{Kevin}.

\begin{lem}\label{lem:same}
    If $G$ has a different number of sources and sinks, then $X_G(x;q)$ is not symmetric.
\end{lem}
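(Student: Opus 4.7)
The plan is to exhibit an asymmetry in the coefficient of $q^{|E|}$ in $X_G(x;q)$, which must itself be a symmetric function in $x$ whenever $X_G$ is; specifically, I will compare the coefficients of $M_{(a,1^{n-a})}$ and $M_{(1^{n-a},a)}$ for $a=\max(s,t)$, where $s,t$ are the numbers of sources and sinks.

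First I would record the standard re-indexing
\[
X_G(x;q)=\sum_\pi q^{\operatorname{fwd}(\pi)}\,M_{\operatorname{type}(\pi)},
\]
where $\pi=(S_1,\ldots,S_k)$ ranges over ordered partitions of $V$ into non-empty independent sets (the color classes of a proper coloring listed in increasing order of color) and $\operatorname{fwd}(\pi)$ counts edges going from an earlier block to a later one. The key structural observation is that an \emph{all-forward} partition, i.e.\ one with $\operatorname{fwd}(\pi)=|E|$, must satisfy $S_1\subseteq\operatorname{sources}(G)$ and $S_k\subseteq\operatorname{sinks}(G)$: any incoming edge to $S_1$ would have to come from a later block (backward) or within $S_1$ (forbidden by independence), and dually for $S_k$.

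Applying this with $\alpha=(a,1^{n-a})$ forces $S_1$ to be an $a$-subset of sources — automatically independent, since sources are pairwise non-adjacent — and the remaining singletons to trace a linear extension of $G-S_1$. Writing $e(H)$ for the number of linear extensions of a DAG $H$, this yields
\[
[q^{|E|}][M_{(a,1^{n-a})}]X_G=\sum_{\substack{S\subseteq\operatorname{sources}(G)\\|S|=a}}e(G-S),\quad
[q^{|E|}][M_{(1^{n-a},a)}]X_G=\sum_{\substack{T\subseteq\operatorname{sinks}(G)\\|T|=a}}e(G-T),
\]
the second formula obtained by a dual analysis of the last block of the partition.

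To finish, suppose $X_G$ is symmetric and take $a=\max(s,t)$, which is at least $2$ because every non-empty DAG has $s,t\geq 1$ and here $s\neq t$. Then the sum over $a$-subsets of the smaller of $\operatorname{sources}(G),\operatorname{sinks}(G)$ is empty and equals $0$, while the other sum consists of the single term $T$ equal to the entire larger set, with value $e(G-\operatorname{sinks}(G))$ or $e(G-\operatorname{sources}(G))$. This latter count is at least $1$, since otherwise $V$ would consist entirely of sinks (or sources), making $G$ edgeless and forcing $s=t=n$, a contradiction. The required equality of the two $M$-coefficients therefore fails.

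The main subtlety is the structural observation pinning down $S_1\subseteq\operatorname{sources}(G)$ and $S_k\subseteq\operatorname{sinks}(G)$ for all-forward partitions. Once this is in hand, the composition $(a,1^{n-a})$ is exactly the right shape: both coefficients collapse to a clean sum of $e(G-\cdot)$'s indexed by $a$-subsets of the two sides, at which point the asymmetry $s\neq t$ immediately produces the contradiction.
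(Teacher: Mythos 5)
Your proof is correct and takes essentially the same route as the paper: both compare the coefficient of $q^{|E|}$ on two rearrangements of one composition, using the key observation that in a coloring with all $|E|$ edges ascending, the vertices of minimal color must be sources and those of maximal color must be sinks. The only differences are cosmetic --- you use $(a,1^{n-a})$ versus $(1^{n-a},a)$ with $a=\max(s,t)$ and an exact count via linear extensions, which lets you avoid the paper's reduction to $a<b$ via the edge-reversal lemma, whereas the paper uses $(a,1^{n-a-b},b)$ versus $(b,1^{n-a-b},a)$ and only needs existence/nonexistence of a maximal-ascent coloring.
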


\begin{proof}
    Let $a$ be the number of sources and $b$ be the number of sinks in $G$. As $G^{rev}$ has $b$ sources and $a$ sinks, it suffices to assume that $a<b$ by Lemma~\ref{lem:rev}. We will show that in the monomial quasisymmetric function expansion of $X_G(x;q)$, the coefficients of $M_{(a,1^{n-a-b},b)}$ and $M_{(b,1^{n-a-b},a)}$ are different. First, consider a coloring of $G$ such that each source has color $1$, each sink has color $n-a-b+2$, and the intermediate vertices are colored in such a way to achieve the maximum ascent statistic. Since every intermediate vertex has a color between $1$ and $n-a-b+2$, this coloring $\kappa$ has $\mathrm{asc}(\kappa) = |E|$, so $q^{|E|}$ appears with a nonzero coefficient on $M_{(a,1^{n-a-b},b)}$ in the expansion of $X_G(x;q)$. 
    
    Next, consider a coloring of the vertices of $G$ with content $(b,1^{n-a-b},a)$. As $a<b$, there are more sinks in $G$ than sources, and so the coloring must contain a non-source vertex colored with a $1$. Then, an edge directed towards this vertex does not count towards the ascent number of $\kappa$. Thus, no such coloring contains $|E|$ ascents, and so $q^{|E|}$ does not appear in the coefficient of $M_{(b,1^{n-a-b},a)}$ in the expansion of $X_G(x;q)$. Hence $X_G(x;q)$ is not symmetric.
\end{proof}

For the remainder of the paper, we assume that all directed graphs have the same number of sinks and sources.

\begin{definition}
   An \textbf{antichain} in a directed acyclic graph is a set of vertices $\{v_1,\ldots,v_k\}$ such that there is not a directed path from any $v_i$ to $v_j$.
\end{definition}

Note that this matches the definition of an antichain on a poset when we consider the directed acyclic graph as a poset such that $v_i < v_j$ if and only if there is a directed path from $v_i$ to $v_j$.

\begin{lem}\label{lem:antichain}
    Let $G$ be a directed acyclic graph. If $G$ has an antichain whose size is larger than the number of sinks (and the number of sources), then $X_G(x;q)$ is not symmetric.
\end{lem}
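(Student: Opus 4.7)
The plan is to adapt the strategy of Lemma \ref{lem:same}: exhibit two compositions $\alpha$ and $\alpha'$ that rearrange each other but whose coefficients in $X_G(x;q)$ differ in the $q^{|E|}$ coefficient, where $|E|$ is the number of edges. By Lemma \ref{lem:same} we may assume the number of sources $a$ equals the number of sinks $b$. Let $A$ be the given antichain, of size $k > a$. I would take $\alpha' = (k, 1^{n-k})$ and produce $\alpha$ having the block of size $k$ in an interior position.

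To build $\alpha$, partition $V \setminus A$ into the strict ancestors $B$ of $A$, the strict descendants $C$ of $A$, and the ``side'' vertices $D$ having no directed path to or from $A$. Because $A$ is an antichain, $B \cap C = \emptyset$; two short checks also rule out any edges $D \to B$ or $C \to D$, since either would, by concatenating with a path in $A$'s direction, force the $D$-vertex into $B$ or $C$ respectively. I would then topologically sort $B \cup D$ with colors $1,\dots,|B|+|D|$, color all of $A$ with the single color $|B|+|D|+1$, and topologically sort $C$ with colors $|B|+|D|+2,\dots,n-k+1$. A case check on edge types ($B\to A$, $B\to C$, $A\to C$, $D\to C$, plus internal edges in $B \cup D$ and $C$) confirms that every edge of $G$ becomes an ascent, yielding a proper coloring of weight $\alpha = (1^{|B|+|D|}, k, 1^{|C|})$ with $\mathrm{asc}(\kappa) = |E|$.

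To finish, I need two short facts. First, $|B|, |C| \ge 1$: since $k > a$, not every $v \in A$ can be a source, so some $v \in A$ has a predecessor $u \notin A$ contributing to $B$, and symmetrically $C \ne \emptyset$. Hence $\alpha$ and $\alpha'$ are genuinely distinct rearrangements of the same composition. Second, no coloring of weight $\alpha'$ attains $|E|$ ascents: such a coloring is a strict topological labeling, so any vertex receiving the smallest color used must have no predecessor and thus be a source of $G$; weight $\alpha'$ would therefore force $k$ distinct sources, contradicting $a < k$. It follows that the coefficient of $q^{|E|}$ is strictly positive in the $M_\alpha$-coefficient of $X_G(x;q)$ but zero in the $M_{\alpha'}$-coefficient, so $X_G(x;q)$ is not symmetric.

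The step I expect to be the main obstacle is the topological construction in the second paragraph: we must be sure that $A$ can genuinely occupy a single level of a strict topological labeling of $G$. The key enabling observation is the absence of $D \to B$ and $C \to D$ edges, which is precisely what lets us consolidate $B \cup D$ on one side of $A$ and $C$ on the other while respecting all edge orientations. Everything else is a direct count of ascents and a pigeonhole argument against sources.
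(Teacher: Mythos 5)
Your proof is correct and follows essentially the same strategy as the paper: construct a maximum-ascent ($q^{|E|}$) coloring whose weight has the antichain's block of size $k$ in an interior position, and rule out any $|E|$-ascent coloring of weight $(k,1^{n-k})$ by pigeonholing the vertices colored $1$ against the $a<k$ sources. The only cosmetic difference is in the grouping: the paper colors the strict ancestors $T$ of the antichain below it and lumps the ``side'' vertices with everything above, while you lump the side vertices $D$ with the ancestors below; both work, and your explicit check that the two compositions are genuinely distinct (via $B\neq\emptyset$) is a detail the paper leaves implicit.
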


\begin{proof}
Let $V$ be the set of vertices of $G$ and let $a$ be the number of sources. By assumption, there exists an antichain $S$ in $G$ with cardinality $|S|>a$. Consider the subset $T$ of $V$ consisting of all vertices that are not in $S$ and have a directed path to a vertex in $S$. (Viewing $G$ as a poset, $T$ consists all elements that are strictly less than at least one element of $S$.) 

We construct a coloring with maximum ascent statistic $|E|$ and content $(1^{|T|},|S|,1^{n-|S|-|T|})$. Color all of the vertices in $S$ with the color $|T|+1$ and the vertices in $T$ with the colors $1$ through $|T|$ such that every edge between vertices in $|T|$ contributes an ascent. Likewise, color the vertices in $V-(S\cup T)$ with the remaining colors such that every edge between these vertices contributes an ascent. As $S$ is an antichain, this construction gives a proper coloring of $G$. Observe that any directed edge crossing between the sets $S$, $T$, and $V-(S \cup T)$ must go from $T$ to $S$, $T$ to $V-(S \cup T)$, or $S$ to $V-(S \cup T)$. The constructed coloring respects this relationship, and therefore has maximum ascent statistic $|E|$. Thus, $q^{|E|}$ appears in the coefficient of $M_{(1^{|T|},|S|,1^{n-|S|-|T|})}$

Now consider a coloring of $G$ with content $(|S|,1^{n-|S|})$. Since $|S| > a$, there exists a nonsource vertex $v$ colored $1$. All incoming edges to this vertex will not contribute an ascent implying that there is no proper coloring of $G$ with content $(|S|,1^{n-|S|})$ and maximum ascent statistic $|E|$. Hence $q^{|E|}$ does not appear in the coefficient of $M_{(|S|,1^{n-|S|})}$, and thus, the chromatic quasisymmetric function is not symmetric.
\end{proof}

\subsection{Directed acyclic graphs with at least two sources}

Consider a connected, directed acyclic graph $G$ with at least two sources. We show that the chromatic quasisymmetric function of $G$ is not symmetric. As before, we will view $G$ as a poset where a vertex $w$ is less than $v$ if and only if there is a directed path from $w$ to $v$. Let $n$ be the number of vertices in $G$ and $a \geq 2$ be its number of sources (and therefore its number of sinks by Lemma ~\ref{lem:same}). Let $S(G)$ be the set of all vertices $v$ of $G$ having at least two sources that are smaller than $v$. 

\begin{lem}
    The set $S(G)$ is nonempty.
\end{lem}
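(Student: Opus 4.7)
The plan is to argue by contradiction, supposing every vertex $v \in V(G)$ has at most one source strictly below it in the induced poset structure. Combining this with the observation that every non-source vertex has at least one source strictly below it (obtained by walking against edges from $v$; since $G$ is finite and acyclic, the walk terminates at a vertex with no incoming edges, i.e., a source), we conclude that every non-source vertex has exactly one source strictly below it. Note that any $v \in S(G)$ is automatically a non-source, since sources are minimal.

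This lets me define an auxiliary function $\sigma \colon V(G) \to V(G)$ by setting $\sigma(v) = v$ if $v$ is a source, and otherwise letting $\sigma(v)$ be the unique source strictly below $v$. The key step is to show that $\sigma$ is constant on each edge: if $u \to v$ is a directed edge, then $v$ is not a source and $\sigma(u) \leq u < v$ exhibits $\sigma(u)$ as a source strictly below $v$, so uniqueness forces $\sigma(u) = \sigma(v)$. Thus $\sigma$ is constant on each connected component of the underlying undirected graph. Since $G$ is connected (as $G$ was assumed to be a connected directed acyclic graph), $\sigma$ takes a single value on all of $V(G)$, meaning $G$ has exactly one source — contradicting the assumption that $G$ has at least two sources.

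The argument is short and purely combinatorial, so I do not anticipate a serious obstacle. The only care needed is to distinguish strict versus weak comparability in the poset so that $\sigma$ is well-defined on sources as well, and to invoke connectivity of the underlying undirected graph (rather than any stronger order-theoretic connectivity) in the final step.
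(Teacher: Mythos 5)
Your proof is correct, and it takes a genuinely different route from the paper. The paper works under its standing assumption (justified earlier by Lemma \ref{lem:same}) that the number of sources equals the number of sinks $a\ge 2$: it supposes every sink has exactly one source below it, so the sinks are paired bijectively with the sources, and argues this pairing would force $G$ to be disconnected; in particular it locates an element of $S(G)$ among the sinks. You instead argue directly from connectivity: assuming $S(G)=\emptyset$, every non-source vertex has a unique source strictly below it, so the map $\sigma$ sending each vertex to ``its'' source (and each source to itself) is well defined, and the check $\sigma(u)\le u<v$ along a directed edge $u\to v$ shows $\sigma$ is constant on edges, hence constant on the connected graph, forcing a single source. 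Your argument is more self-contained and slightly more general --- it needs neither the sink/source count equality nor any mention of sinks, proving that any connected DAG with at least two sources has a vertex above two distinct sources --- while the paper's version buys the extra (unused) information that the witness can be chosen to be a sink. Both are valid; yours cleanly removes the dependence on Lemma \ref{lem:same} for this particular step.
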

\begin{proof}
    Assume that every sink has exactly one source that is smaller than it. As $G$ has an equal number of sources and sinks, every sink is paired with a unique source. Since $G$ has at least two sinks, this would imply that $G$ is not connected. Thus, at least one sink of $G$ has at least two sources smaller than it, and $S(G)$ is nonempty.
\end{proof}

Given $v \in S(G)$, we define $\stat(v)$ to be the number of nonsource vertices smaller than $v$. Let $k=k_G = 1+\min_{v\in S(G)} \stat(v)$. Denote by $K_{\alpha}^{|E|}$ the set of proper colorings of $G$ having weight $\alpha$ and $|E|$ ascents. We will construct a strictly injective map from $K_{(1^{k},a,1^{n-k-a})}^{|E|}$ to $K_{(a,1^{n-a})}^{|E|}$. Before defining this map, we recall Dilworth's Theorem on posets.

\begin{thm} (Dilworth's Theorem, \cite{Dilworth}) \label{thm:dilworth}
    Let $P$ be a finite poset. Then the largest antichain of $P$ is equal to the minimum number of disjoint chains needed to cover the vertices of $P$.
\end{thm}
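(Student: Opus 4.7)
The plan is to establish the two inequalities relating $a(P)$, the size of the largest antichain, and $c(P)$, the minimum number of chains needed to cover $P$. The easy direction $a(P) \leq c(P)$ is immediate: any antichain meets every chain in at most one element, so any chain cover must devote a distinct chain to each element of a fixed antichain.

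For the reverse inequality $c(P) \leq a(P)$, I induct on $|P|$, with the base case $|P|=0$ trivial. Let $a = a(P)$, and choose a \emph{maximal} chain $C$ in $P$ (one that cannot be extended at either endpoint). If the largest antichain of $P \setminus C$ has size at most $a - 1$, then the inductive hypothesis supplies a chain cover of $P \setminus C$ of size $a - 1$, and adjoining $C$ produces the desired cover of $P$. Otherwise, fix an antichain $A = \{y_1, \ldots, y_a\}$ of $P \setminus C$ with $|A| = a$, and define the downward closure $P^- = \{x \in P : x \leq y_i \text{ for some } i\}$ and upward closure $P^+ = \{x \in P : x \geq y_i \text{ for some } i\}$.

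Two key observations drive the rest of the argument. First, $P = P^- \cup P^+$, since any $x \in P$ incomparable to every $y_i$ would extend $A$ to a larger antichain, contradicting the choice of $a$. Second, both $P^-$ and $P^+$ are proper subsets of $P$: the top endpoint of $C$, being a maximal element of $P$ by maximality of the chain, cannot lie in $P^-$ (it would have to equal some $y_i \in P \setminus C$), and symmetrically the bottom endpoint of $C$ cannot lie in $P^+$. The inductive hypothesis then supplies chain covers of $P^-$ and $P^+$ of size $a$ each (their maximum antichains are still bounded by $a$, and $A$ itself realizes this bound inside each). Because the elements of $A$ are maximal in $P^-$ and minimal in $P^+$ (any strict comparability witnessed inside $P^-$ or $P^+$ would propagate to a strict comparability between two members of $A$), pigeonhole combined with the easy direction forces each chain in either cover to contain exactly one element of $A$: as its top in $P^-$, as its bottom in $P^+$. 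For each $i$, gluing the unique $P^-$-chain and unique $P^+$-chain through $y_i$ at the point $y_i$ yields a chain of $P$, and the resulting $a$ glued chains cover $P^- \cup P^+ = P$.

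The main obstacle is ensuring the recursion terminates, which is precisely why maximality of $C$ is indispensable: an arbitrary chain would not force $P^-$ and $P^+$ to be proper subsets, and without that properness the induction collapses. Once that properness is secured, the rest is structural bookkeeping: the antichain $A$ simultaneously caps the cover of $P^-$ from above and the cover of $P^+$ from below, which is exactly what lets the two halves be spliced into a single chain cover of the prescribed size.
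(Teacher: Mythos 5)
Your proof is correct, and it is worth noting that the paper itself gives no argument for this statement at all: Dilworth's theorem is invoked as a classical result with a citation to \cite{Dilworth}, so there is no in-paper proof to compare against. What you have written is the standard self-contained induction (essentially the Perles/Galvin argument): the easy inequality from the fact that an antichain meets each chain at most once, then induction on $|P|$ using a \emph{maximal} chain $C$, splitting into the case where $P\setminus C$ has strictly smaller antichain number and the case where a maximum antichain $A$ survives in $P\setminus C$, in which case the down-set $P^-$ and up-set $P^+$ of $A$ are proper (because the top of $C$ is maximal in $P$ and the bottom is minimal), cover $P$, each admit an $a$-chain cover by induction, and these covers splice along $A$ since each $y_i$ is the top of its chain in $P^-$ and the bottom of its chain in $P^+$. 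All the delicate points are handled: properness of $P^-$ and $P^+$ via maximality of $C$, the pigeonhole step forcing exactly one element of $A$ per chain, and the legitimacy of gluing. The only cosmetic imprecision is in the first case, where the inductive hypothesis gives a cover of $P\setminus C$ of size \emph{at most} $a-1$ rather than exactly $a-1$, which only helps. Your write-up would serve as a complete proof of the cited theorem; the paper simply chose to outsource it to the literature.
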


By Lemma ~\ref{lem:antichain}, we may assume that the largest antichain of $G$ has size $a$. Thus, $G$ can be covered by $a$ disjoint chains, each of which contains exactly one source and sink. Fix such a minimal chain decomposition $R$ of the graph $G$. We are now ready to define the map of interest.

\begin{definition}
    Let $\varphi_{G,R}\colon K_{(1^{k},a,1^{n-k-a})}^{|E|} \to K_{(a,1^{n-a})}^{|E|}$ where $\varphi_{G,R}(\kappa)$ is the coloring obtained by iterating over every chain in $R$ as follows:
    \begin{itemize}
        \item  If the chain contains a vertex colored $1$, do nothing.
        \item Otherwise, recolor the vertex colored $k+1$ with the color $1$. Then sort the colors in the chain such that they are increasing from the chain's source to the chain's sink.
    \end{itemize}
\end{definition} 

Before showing that $\varphi_{G,R}$ is well-defined, we first prove the following lemma.

\begin{lem} \label{lem:daguod}
    Let $\kappa \in K_{(1^{k},a,1^{n-k-a})}^{|E|}$. Then any vertex in $S(G)$ has color strictly larger than $k+1$ in $\kappa$.
\end{lem}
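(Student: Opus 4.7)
The plan is to exploit the fact that any coloring $\kappa \in K_{(1^{k},a,1^{n-k-a})}^{|E|}$ realizes the maximum possible ascent statistic $|E|$, so \emph{every} directed edge of $G$ must be an ascent. This immediately upgrades the proper coloring condition: if $w \le v$ in the poset associated to $G$, then along any directed path from $w$ to $v$ the colors must strictly increase, so $\kappa(w) < \kappa(v)$. In particular, for any fixed $v \in S(G)$, every vertex strictly below $v$ in the poset receives a color strictly less than $\kappa(v)$.

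Next, I would count the vertices strictly below $v$. Since $v \in S(G)$, at least two sources lie below $v$. By the definition $k = 1 + \min_{u \in S(G)} \stat(u)$, we have $\stat(v) \ge k-1$, i.e.\ at least $k-1$ nonsource vertices lie below $v$. Combining these, at least $(k-1) + 2 = k+1$ vertices of $G$ are strictly below $v$, and by the previous paragraph all of them must receive colors strictly less than $\kappa(v)$.

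Finally, I would bound the number of vertices that can carry colors strictly less than $\kappa(v)$ given the weight $(1^{k},a,1^{n-k-a})$. Colors $1,2,\ldots,k$ each appear exactly once, and color $k+1$ appears $a$ times. If $\kappa(v) \le k+1$, then the colors strictly less than $\kappa(v)$ all lie in $\{1,\ldots,k\}$, accounting for at most $k$ vertices — which is strictly fewer than the $k+1$ vertices we just forced to be colored below $\kappa(v)$. This contradiction yields $\kappa(v) \ge k+2$, i.e.\ $\kappa(v) > k+1$, as required.

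The argument is short and I do not anticipate a serious obstacle: the only thing to be careful about is the translation between the ascent-maximizing condition and strict monotonicity along directed paths, and keeping the two contributions (sources vs.\ nonsources) to the count of vertices below $v$ separate so that the $\stat(v) \ge k-1$ bound plus the ``at least two sources'' hypothesis really combine to give $k+1$ vertices rather than double-counting.
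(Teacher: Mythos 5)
Your proof is correct and follows essentially the same route as the paper: count at least $k-1$ nonsource vertices plus at least two sources strictly below $v$, note that maximality of the ascent statistic forces all $k+1$ of them to receive colors strictly less than $\kappa(v)$, and conclude from the weight $(1^{k},a,1^{n-k-a})$ that $\kappa(v)\geq k+2$. The only difference is that you spell out explicitly the final counting step (at most $k$ vertices can carry colors in $\{1,\ldots,k\}$), which the paper leaves implicit.
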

\begin{proof}
    Given a vertex $v$ in $S(G)$, the number of nonsource vertices smaller than $v$ is at least $k-1$. Moreover, as $v$ is greater than at least two sources, there are at least $k+1$ vertices smaller than $v$. Thus, in order for a proper coloring $\kappa$ of weight $(1^{k},a,1^{n-k-a})$ to have maximum ascent statistic $|E|$, we must have $\kappa(v) \geq k+2$ which proves the lemma.
\end{proof}

\begin{lem}
    The map $\varphi_{G,R}$ is well-defined.
\end{lem}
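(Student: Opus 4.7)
The plan is to verify three things: the recoloring rule uniquely determines a vertex on each chain to which it applies; the output $\kappa':=\varphi_{G,R}(\kappa)$ has weight $(a,1^{n-a})$; and $\kappa'$ is a proper coloring of $G$ with exactly $|E|$ ascents.

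The first two points are quick consequences of the setup. Since $\kappa$ has $|E|$ ascents, every edge is an ascent, so colors strictly increase along every directed path; hence the colors on each chain of $R$ are pairwise distinct. With $a$ vertices of color $k+1$ spread across $a$ chains, each chain must contain exactly one such vertex; likewise the unique color-$1$ vertex lies on exactly one chain, so on each of the remaining $a-1$ chains the recoloring rule picks out a unique vertex. The weight claim follows immediately: each changed chain trades one $k+1$ for one $1$, giving $a$ vertices of color $1$ and one vertex of color $k+1$ in $\kappa'$, with all other multiplicities unchanged.

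The substantive step is to verify that every edge of $G$ remains an ascent in $\kappa'$. Edges lying within a single chain are automatic, since after sorting, the colors are strictly increasing from source to sink along each chain, matching the direction of any directed path in $G$ between two chain vertices. For a cross-chain edge $(u,v)$ with $u\in C_i$ and $v\in C_j$ for $i\ne j$, I would argue as follows. Let $s_i,s_j$ denote the sources of $G$ at the bottom of these two chains. Since $\kappa$ has maximum ascent, $u<v$ in the poset, so $s_i\le u<v$, whence $s_i<v$; and since $v$ has an incoming edge it is not a source, so $s_j<v$ as well. Thus $v$ has at least two distinct sources of $G$ strictly below it, placing $v\in S(G)$, and Lemma~\ref{lem:daguod} gives $\kappa(v)\ge k+2$. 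Therefore $v$ lies strictly past the color-$(k+1)$ vertex of $C_j$ in the chain order, in the ``frozen'' tail that the sort on $C_j$ leaves untouched; hence $\kappa'(v)=\kappa(v)$. A short position-by-position check shows that the sort on any chain can only weakly decrease each vertex's color, so $\kappa'(u)\le\kappa(u)$. Combining, $\kappa'(u)\le\kappa(u)<\kappa(v)=\kappa'(v)$, so the edge remains a proper ascent in $\kappa'$.

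The main obstacle is precisely this cross-chain case: a priori, sorting could lower the target's color below the source's and spoil the ascent. The critical input that prevents this is Lemma~\ref{lem:daguod}, whose two-sources-below criterion is exactly what forces the target of every cross-chain edge into the frozen portion of its chain where the sort makes no change.
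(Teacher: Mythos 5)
Your proof is correct and follows essentially the same route as the paper's: reduce to cross-chain edges, observe that the head of such an edge lies above the sources of both chains and hence in $S(G)$, invoke Lemma \ref{lem:daguod} to see its color exceeds $k+1$ and is therefore untouched, and combine with the fact that the map only weakly decreases colors. The extra verification that each chain contains a unique vertex colored $k+1$ (so the recoloring step is unambiguous) is a welcome detail the paper leaves implicit, but it does not change the argument.
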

\begin{proof}
    Let $\kappa \in K_{(1^{k},a,1^{n-k-a})}^{|E|}$. By construction, $\varphi_{G,R}(\kappa)$ has the appropriate weight. It remains to show that the coloring is proper and has maximum ascent statistic $|E|$. Consider two adjacent vertices $v$ and $w$ in $G$ where $v$ is larger than $w$. We show that the coloring of $v$ is strictly larger than that of $w$ in $\varphi_{G,R}(\kappa)$. If $v$ and $w$ lie within the same chain in the minimal chain decomposition $R$, then the color of $v$ is larger than $w$ by construction. Thus, we may assume that $v$ and $w$ lie in different chains in $R$. 

    As the color of every vertex in $\varphi_{G,R}(\kappa)$ is weakly smaller than their respective color in $\kappa$, it suffices to show that $v$ is painted the same color in both $\kappa$ and $\varphi_{G,R}(\kappa)$. Moreover, since $\varphi_{G,R}$ does not recolor any vertex having color larger than $k+1$ in $\kappa$, it suffices to show that the color of $v$ in $\kappa$ is larger than $k+1$.
    
    Since $v$ and $w$ lie in different chains, $v$ must be greater than at least two different sources of $G$: the source in the chain of $v$ and the source in the chain of $w$. This implies that $v \in S(G)$, and $v$ has color larger than $k+1$ in $\kappa$ by Lemma ~\ref{lem:daguod}. Thus, the map $\varphi_{G,R}$ is well-defined.
\end{proof}

\begin{lem} \label{lem:dagsurj}
    The map $\varphi_{G,R}$ is injective, but not surjective.
\end{lem}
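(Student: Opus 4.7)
The plan is to establish injectivity by writing down an inverse procedure on the image, and to establish non-surjectivity by building a coloring in $K_{(a,1^{n-a})}^{|E|}$ that assigns color $k+1$ to a vertex for which no preimage coloring can.

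For injectivity, I first note that in any $\kappa \in K_{(1^k,a,1^{n-k-a})}^{|E|}$ the $a$ vertices of color $k+1$ must be distributed one per chain of $R$: the max-ascent hypothesis forces strictly increasing colors along each chain, so each chain uses color $k+1$ at most once, and there are exactly $a$ chains and $a$ such vertices. The chain $C_0$ containing the vertex originally colored $1$ is fixed by $\varphi_{G,R}$, and every other chain loses its $(k+1)$-colored vertex (recolored to $1$). Consequently $\mu := \varphi_{G,R}(\kappa)$ has a unique vertex of color $k+1$, lying in $C_0$, which pins down $C_0$ from $\mu$ alone. On each chain $C \neq C_0$, the original color pattern is recovered by locating the largest position $j$ whose vertex has color $\leq k$ in $\mu$, reinserting color $k+1$ at that position, and shifting the colors at positions $2, \ldots, j$ one step back toward the source of the chain; this uniquely determines $\kappa$, so $\varphi_{G,R}$ is injective.

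For non-surjectivity, pick $v^{*} \in S(G)$ with $\stat(v^{*}) = k-1$, and build $\mu \in K_{(a,1^{n-a})}^{|E|}$ with $\mu(v^{*}) = k+1$ as follows. Take a linear extension of the poset that lists first all $a$ sources, then the $k-1$ non-sources strictly below $v^{*}$, then $v^{*}$ itself, then the remainder; this is possible because non-sources strictly below $v^{*}$ are downward-closed among non-sources. Color every source $1$ and assign colors $2, 3, \ldots, n-a+1$ in order to the non-source vertices along this extension. Then $\mu$ has weight $(a, 1^{n-a})$, is proper, and has every edge an ascent (the tail of every edge precedes its head in the extension and so carries a strictly smaller color), and by construction $\mu(v^{*}) = k+1$. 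If $\mu = \varphi_{G,R}(\kappa)$ for some $\kappa$, then by the injectivity analysis the unique color-$(k+1)$ vertex of $\mu$ must be the preserved color-$(k+1)$ vertex of $C_0$ in $\kappa$, forcing $\kappa(v^{*}) = k+1$; but $v^{*} \in S(G)$, so Lemma~\ref{lem:daguod} requires $\kappa(v^{*}) \geq k+2$, a contradiction. Hence $\mu$ is not in the image of $\varphi_{G,R}$.

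The delicate point is the construction of $\mu$: it must simultaneously hit the correct weight, achieve $|E|$ ascents, and land $v^{*}$ on color exactly $k+1$. The equality $\stat(v^{*}) = k-1$ is precisely what makes this possible, since coloring the sources with $1$ and the $k-1$ non-sources below $v^{*}$ with $2, \ldots, k$ exhausts the small colors at exactly the right moment for the next slot to fall on $v^{*}$.
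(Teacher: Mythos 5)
Your proof is correct and follows essentially the same route as the paper: injectivity via an explicit inverse (identifying the changed chains as those whose image lacks the color $k+1$ and restoring that color, which is equivalent to the paper's ``recolor the source $k+1$ and re-sort''), and non-surjectivity by producing a coloring in $K_{(a,1^{n-a})}^{|E|}$ that places color $k+1$ on a vertex $v^*\in S(G)$ with $\stat(v^*)=k-1$ and invoking Lemma~\ref{lem:daguod}. The only difference is cosmetic: you make the witness coloring explicit via a linear extension, where the paper simply asserts such a coloring exists.
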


\begin{proof}
    A coloring $\kappa \in K_{(1^{k},a,1^{n-k-a})}^{|E|}$ can be recovered from its image $\varphi_{G,R}(\kappa)$ as follows. In $\kappa$, every chain contains a vertex colored $k+1$, and the chains that change are precisely those in $\varphi_{G,R}(\kappa)$ that no longer contain a vertex colored $k+1$.  Moreover, in $\varphi_{G,R}(\kappa)$, every chain in $R$ has its source colored $1$ by the definition of $\varphi_{G,R}$.
    Thus we can recover $\kappa$ by, for each chain of $R$ that does not contain a vertex colored $k+1$, recoloring its source with the color $k+1$ and sorting the colors in the chain such that they are increasing from the chain's source to the chain's sink. Hence, the map $\varphi_{G,R}$ is injective. 
    
    To show that $\varphi_{G,R}$ is not surjective, we find a coloring $\tilde{\kappa} \in K_{(a,1^{n-a})}^{|E|}$ that is not in the image of $\varphi_{G,R}$. Let $v$ be a vertex in $S(G)$ such that $k = \stat(v)+1$. As $\stat(v) = k-1$, there are $k-1$ nonsource vertices smaller than $v$. Construct the coloring $\tilde{\kappa}$ by coloring all sources $1$, coloring the nonsource vertices smaller than $v$ with the colors $2, \ldots, k$ such that every edge has an ascent, coloring $v$ with $k+1$, and coloring the rest of the vertices with the remaining colors so as to obtain the maximum ascent statistic $|E|$. By Lemma ~\ref{lem:daguod}, the vertex $v$ must have color larger than $k+1$ in every coloring in $K_{(1^{k},a,1^{n-k-a})}^{|E|}$. Since $\varphi_{G,R}$ does not change the color of any vertex having color larger than $k+1$, $\tilde{\kappa}$ cannot be in the image of $\varphi_{G,R}$. Thus, the map $\varphi_{G,R}$ is not surjective.
\end{proof}

Lemma ~\ref{lem:dagsurj} gives us the desired result on directed acyclic graphs.

\begin{DagTheorem}
    Let $G$ be a connected, directed acyclic graph. If $G$ has at least two sources, then $X_{G}(x;q)$ is not symmetric.
\end{DagTheorem}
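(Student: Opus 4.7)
The plan is to use the machinery already assembled to derive a contradiction: assuming $X_G(x;q)$ is symmetric, I will compare the coefficients of two specific monomial quasisymmetric functions in its expansion and show they cannot be equal. First, by Lemma \ref{lem:same}, I may assume $G$ has exactly $a$ sources and $a$ sinks with $a\ge 2$, and by Lemma \ref{lem:antichain}, I may further assume that the largest antichain in $G$ (viewed as a poset) has size exactly $a$. Dilworth's Theorem (Theorem \ref{thm:dilworth}) then supplies a decomposition $R$ of the vertex set of $G$ into $a$ disjoint chains, each containing exactly one source-sink pair, which is precisely the setup used to define $\varphi_{G,R}$.

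Next, the key observation is that in any symmetric function $h = \sum_\alpha c_\alpha M_\alpha$, the coefficients $c_\alpha$ and $c_{\alpha'}$ must agree whenever $\alpha$ and $\alpha'$ sort to the same partition; this is because $m_\lambda$ expands as the sum of $M_\alpha$ over all distinct rearrangements $\alpha$ of $\lambda$. Both of the compositions $(1^k, a, 1^{n-k-a})$ and $(a, 1^{n-a})$ sort to the partition $(a, 1^{n-a})$, so if $X_G(x;q)$ were symmetric, these two polynomial coefficients would have to coincide as polynomials in $q$.

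Finally, I would extract the coefficient of $q^{|E|}$ from each. By the remark at the end of Section \ref{sec:background}, these coefficients equal $\lvert K_{(1^k,a,1^{n-k-a})}^{|E|} \rvert$ and $\lvert K_{(a,1^{n-a})}^{|E|} \rvert$ respectively. However, Lemma \ref{lem:dagsurj} provides an injective but not surjective map $\varphi_{G,R}$ from the first set into the second, forcing the first cardinality to be strictly smaller than the second. This contradicts the equality demanded by symmetry, completing the argument. The real technical challenge was already met in constructing $\varphi_{G,R}$ and verifying its injectivity and non-surjectivity via Lemmas \ref{lem:daguod} and \ref{lem:dagsurj}; the theorem itself then follows as a short bookkeeping argument comparing $M_\alpha$-coefficients across rearrangement classes.
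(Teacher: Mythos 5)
Your proposal is correct and follows essentially the same route as the paper: reduce via Lemma \ref{lem:same} and Lemma \ref{lem:antichain}, use Dilworth's Theorem to fix the chain decomposition $R$, and then invoke the strict injection $\varphi_{G,R}$ of Lemma \ref{lem:dagsurj} to see that the coefficient of $q^{|E|}$ in the $M_{(1^k,a,1^{n-k-a})}$-coefficient is strictly smaller than in the $M_{(a,1^{n-a})}$-coefficient, contradicting symmetry. The paper states this final bookkeeping only implicitly ("Lemma \ref{lem:dagsurj} gives us the desired result"), and your write-up fills it in accurately.
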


Combining Dilworth's Theorem \cite{Dilworth} with Lemma ~\ref{lem:antichain} and Theorem~\ref{thm:dag}, we also obtain the following.
\begin{cor} \label{cor:dpath}
    Let $G$ be a connected, directed acyclic graph with a symmetric chromatic quasisymmetric function. Then $G$ must have a directed path from its source to sink that includes every vertex of $G$.
\end{cor}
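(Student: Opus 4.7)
The plan is to combine the three ingredients mentioned in the hint (Theorem~\ref{thm:dag}, Lemma~\ref{lem:antichain}, and Dilworth's Theorem~\ref{thm:dilworth}) to force $G$, viewed as the poset whose order is reachability by directed paths, to be a single chain, and then argue that this chain must be realized by literal edges.

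First, I would pin down the source/sink count. Assume $X_G(x;q)$ is symmetric. By Theorem~\ref{thm:dag}, $G$ has at most one source. Applying Lemma~\ref{lem:rev}, the CQF of $G^{\rev}$ is also symmetric, and since sinks of $G$ are sources of $G^{\rev}$, Theorem~\ref{thm:dag} applied to $G^{\rev}$ also forces $G$ to have at most one sink. A connected nonempty graph has at least one of each, so $G$ has exactly one source $s$ and one sink $t$.

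Next, Lemma~\ref{lem:antichain} says the largest antichain of $G$ cannot exceed the number of sources (which is $1$). Hence the poset structure on $V(G)$, where $u < v$ iff there is a directed path from $u$ to $v$, has maximum antichain size equal to $1$. By Dilworth's Theorem~\ref{thm:dilworth}, the vertex set can be covered by a single chain, so the poset is a total order $w_1 < w_2 < \cdots < w_n$. Since $w_1$ is the unique minimum, it cannot have an incoming edge (the tail of such an edge would be strictly below it), so $w_1 = s$; symmetrically $w_n = t$.

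The key remaining step, and the one I expect to require the most care, is upgrading this total order from a mere reachability chain to an actual directed Hamiltonian path, i.e.\ showing each consecutive pair $w_i, w_{i+1}$ is joined by an edge in $G$. By definition of the poset order, there exists some directed path $w_i = u_0 \to u_1 \to \cdots \to u_r = w_{i+1}$ in $G$. Every intermediate vertex $u_j$ lies in $V(G)$, so it occurs as some $w_\ell$ in the total order, and the prefix and suffix of the path give $w_i < u_j < w_{i+1}$ in the poset. But $w_i$ and $w_{i+1}$ are consecutive in the chain, so no such $w_\ell$ can exist; thus $r = 1$, and $w_i \to w_{i+1}$ is a direct edge. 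Stringing these edges together yields the directed path $s = w_1 \to w_2 \to \cdots \to w_n = t$ through every vertex, which is the desired conclusion.
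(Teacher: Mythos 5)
Your proof is correct and follows exactly the route the paper intends, combining Theorem~\ref{thm:dag} (together with Lemma~\ref{lem:rev}) to get a unique source and sink, Lemma~\ref{lem:antichain} to bound the maximum antichain by $1$, and Dilworth's Theorem~\ref{thm:dilworth} to conclude the reachability poset is a chain. The paper leaves the final step implicit, and your argument that consecutive chain elements must be joined by an actual edge (since any intermediate vertex of a connecting path would lie strictly between them) is a correct and welcome filling-in of that detail.
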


As an additional corollary, we obtain the following characterization of all directed trees with symmetric chromatic quasisymmetric functions thereby settling an open question in \cite{Steph}.

\begin{cor}
    Let $T$ be a directed tree. Then $X_{T}(x;q)$ is symmetric if and only if $T$ is a directed path.
\end{cor}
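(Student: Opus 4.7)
The plan is to prove the two implications separately. The forward direction (directed path implies symmetric CQF) follows immediately from Shareshian--Wachs \cite{SW16}: the naturally labeled directed path on $n$ vertices is the incomparability graph of the unit interval order with intervals $[i, i+1/2]$ for $i=1,\ldots,n$, whose only overlapping pairs are consecutive, so its CQF is symmetric by their result. (Alternatively, one could note that this case is implicit in the discussion of \cite{Steph} referenced in the introduction.)

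For the converse, I will assume $T$ is a directed tree with $X_T(x;q)$ symmetric. Theorem \ref{thm:dag} applied to $T$ rules out $T$ having two or more sources, and applying the same theorem to $T^{\rev}$ via Lemma \ref{lem:rev} rules out two or more sinks. Since any nonempty finite directed acyclic graph has at least one source and at least one sink, $T$ has exactly one of each. By Corollary \ref{cor:dpath}, $T$ then contains a directed Hamiltonian path $P$ from its unique source to its unique sink.

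The last step, and the only structurally interesting one, is to observe that a tree containing a Hamiltonian path must itself be a path graph. Indeed, such a $P$ uses $n-1$ edges, a tree on $n$ vertices also has exactly $n-1$ edges, and removing any edge of a tree disconnects it, so the spanning connected subgraph $P$ with $n-1$ edges must coincide with $T$ as an edge set. Since every edge of $P$ is oriented from its source side toward its sink side, the tree $T$ is itself a directed path. I do not anticipate any real obstacle here; the substantive work was already carried out in Corollary \ref{cor:dpath}, and the remainder reduces to this standard tree-theoretic fact.
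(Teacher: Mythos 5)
Your proposal is correct and follows essentially the same route as the paper: symmetry for directed paths via Shareshian--Wachs \cite{SW16}, and the converse by combining Theorem \ref{thm:dag} (with Lemma \ref{lem:rev} for sinks) and Corollary \ref{cor:dpath}. The only difference is that you spell out the elementary edge-counting step showing a tree containing a directed Hamiltonian path must be that path, which the paper leaves implicit.
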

\begin{proof}
    As directed paths are natural unit interval graphs, the symmetry of the chromatic quasisymmetric functions for directed paths follows from Shareshian and Wachs \cite{SW16}. Let $T$ be a directed tree with a symmetric quasisymmetric function. Theorem ~\ref{thm:dag} implies that $T$ has exactly one source and sink, and Corollary ~\ref{cor:dpath} implies that $T$ must be a directed path from its source to sink. 
\end{proof}

We can also characterize which directed acyclic orientations on cycles have symmetric chromatic quasisymmetric functions. A directed acyclic orientation on a cycle is naturally oriented if the cycle has exactly one source and sink with an edge going from its source to sink.

\begin{cor} \label{cor:cycle}
    Let $C$ be a directed acyclic cycle. Then $X_{C}(q;t)$ is symmetric if and only if $C$ is naturally oriented.
\end{cor}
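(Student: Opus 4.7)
The plan is to establish the two directions of the biconditional separately, relying on Ellzey and Wachs \cite{EW20} for the easy direction and on the results of this section for the harder one. For the implication ``naturally oriented $\Rightarrow$ symmetric'', I would simply observe that a naturally oriented cycle is precisely the standard labeled cycle graph $C_n$, whose CQF was shown to be $e$-positive, and hence symmetric, by Ellzey and Wachs \cite{EW20}.

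For the contrapositive of the other direction, I would suppose $C$ is a DAG orientation of a cycle that is not naturally oriented and deduce that $X_C(x;q)$ is not symmetric. First, if $C$ has more than one source, Theorem \ref{thm:dag} immediately gives nonsymmetry, so we may assume $C$ has a single source $s$ and single sink $t$ (using Lemma \ref{lem:same} to equate the count of sinks and sources). The structural step is to note that since every vertex of a cycle has degree exactly two, the orientation of the two edges at any non-source, non-sink vertex must be one in and one out, while $s$ has both edges outgoing and $t$ has both edges incoming. This forces $C$ to decompose into exactly two internally disjoint directed paths from $s$ to $t$, of edge-lengths $a$ and $b$ with $a+b=n$. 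The naturally oriented case is precisely when $\min(a,b)=1$, so by hypothesis $a,b\geq 2$, and both halves contain at least one internal vertex.

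I would then invoke Corollary \ref{cor:dpath}: if $X_C(x;q)$ were symmetric, then $C$ would contain a directed Hamiltonian path from $s$ to $t$. But any directed path leaving $s$ must commit to one of the two outgoing edges, and once it enters a given half of the cycle the orientation forces it to continue along that half all the way to $t$, missing every internal vertex of the other half. Hence no directed Hamiltonian path from $s$ to $t$ exists, contradicting Corollary \ref{cor:dpath}; therefore $X_C(x;q)$ is not symmetric, completing the proof. The main obstacle is really just the structural observation that a DAG orientation of a cycle with one source and one sink splits into two directed paths; once that is made precise, the ``no Hamiltonian path'' argument is immediate and the earlier corollaries do all the remaining work.
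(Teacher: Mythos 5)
Your proof is correct and follows essentially the same route as the paper: the ``naturally oriented $\Rightarrow$ symmetric'' direction is cited to Ellzey--Wachs, and the converse rests on Corollary \ref{cor:dpath}. The paper argues the converse directly (a directed Hamiltonian path from source to sink forces the remaining edge to join source and sink, so $C$ is naturally oriented), while you run the contrapositive with a more explicit two-path decomposition, but the underlying argument is the same.
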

\begin{proof}
The symmetry of naturally ordered cycles was proven by Ellzey and Wachs ~\cite{EW20}. Let $C$ be a directed acyclic cycle such that $X_{C}(q;t)$ is symmetric. By Corollary ~\ref{cor:dpath}, $C$ has a directed path from its source to its sink that includes every vertex of $C$. In order for $C$ to be a cycle, its source must share edge with its sink implying that $C$ is naturally oriented.
\end{proof}

\begin{remark}
    Ellzey \cite{Ellzey} and Alexandersson and Panova \cite{AP18} studied a generalization of the chromatic quasisymmetric function that allows for directed cycles. In their papers, they show that directed cycles have a symmetric chromatic quasisymmetric function. Thus, with respect to this more general setting, we have that an oriented cycle has a symmetric chromatic quasisymmetric function if and only if it is a directed or naturally oriented cycle.
\end{remark}

\section{Symmetry of generalized mountain graphs}\label{sec:mountains}

We begin with the case of ordinary mountain graphs in which only $k$-cliques are used.

\subsection{Mountain graphs} \label{section:mountain}

We define a mountain graph as follows.

\begin{definition}
    The \textbf{$(p,k)$-mountain graph $M_{p,k}$} is the graph formed by replacing all but one edge in a cycle of length $p+1$ with a $k$-clique (we require $p\geq 2$ and $k\geq 2$). The edge that was not replaced is called the \textbf{bottom edge}.  We call each of the $k$-cliques the \textbf{mountains} of $M_{p,k}$. 
\end{definition}

Notice that $|V(M_{p,k})| = p(k-1)+1$. In this section, we will consider $G = M_{p,k}$. 

We assign an ordering to the vertices by drawing the mountain graph in the plane with the bottom edge connecting the leftmost vertex to the rightmost vertex, and the mountains connected left to right in between, with the vertices in each clique that are not along the $p+1$-cycle all lying strictly between the two endpoints of the clique in left to right order (see Figure \ref{fig:54-mountain}). We then name the vertices $v_1,\ldots, v_{n} = v_{|V(G)|}$ left to right in this order. A vertex along the induced $p+1$-cycle is called a \textbf{lower vertex}, and any other vertex is called an \textbf{upper vertex}. 

\begin{figure}[ht] 
    \centering
\includegraphics{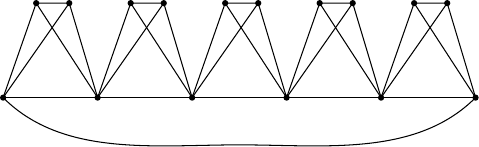}
    \caption{\label{fig:54-mountain}The $(5,4)$-mountain graph}
\end{figure}

Notably, these graphs are not natural unit interval graphs unless $p=2$ and $k=2$ (or a relabeling of a natural unit interval graph when $p=2$ and $k\geq3$).  We will show that if $G$ is a $(p,k)$-mountain graph, then $X_G(x;q)$ is symmetric. In the case of $(p,2)$-mountain graphs, our work recovers the result that the chromatic quasisymmetric functions of naturally labeled cycles are symmetric \cite{EW20}.  Given a proper coloring $\kappa:V(G)\to \N$, define the \textbf{$(a,a+1)$-colored subgraph} to be the induced subgraph on the vertex set $\kappa^{-1}(\{a,a+1\})$.

\begin{lem}\label{lem:mountainsubgraphs}
    Let $G$ be a $(p,k)$-mountain graph, and let $\kappa:V(G)\to \N$ be a proper coloring. Then for all $a$, the $(a,a+1)$-colored subgraph is:
    \begin{itemize} 
    \item A cycle of length $p+1$, or;
    \item A (possibly empty) disjoint union of paths, each of the form $v_{i_1}-v_{i_2}-\cdots-v_{i_j}$, with 
    \begin{itemize}
        \item[$\bullet$] $i_1<i_2<\cdots<i_j$ or
        \item[$\bullet$] $i_1<i_2<\cdots < i_k = n$ and $1 = i_{k+1} < i_{k+2} < \cdots <i_j$ for some $2\leq k\leq j-1$.
    \end{itemize}
    \end{itemize}
\end{lem}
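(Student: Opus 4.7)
The plan is to analyze the $(a,a+1)$-colored subgraph $H$ by exploiting the clique structure of the mountains. Since each mountain is a $k$-clique and $\kappa$ is proper, at most one vertex per mountain is colored $a$ and at most one is colored $a+1$, so each mountain contains at most two vertices of $H$. This immediately bounds degrees in $H$: every upper vertex has all of its $G$-neighbors inside a single mountain, so it has at most one $H$-neighbor; every interior lower vertex $u_i$ (with $0<i<p$) lies in exactly two adjacent mountains and hence has at most one $H$-neighbor in each, giving degree at most $2$; and the extreme lower vertices $u_0, u_p$ have at most two $H$-neighbors counting the possible contribution of the bottom edge. Consequently $H$ has maximum degree at most $2$ and is a disjoint union of paths and cycles.

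Next I would rule out every cycle of $H$ except the lower $(p+1)$-cycle. Since upper vertices have degree at most $1$ in $H$, no cycle can contain an upper vertex, so any cycle of $H$ is supported on the lower vertices $u_0, \ldots, u_p$. These form an induced $(p+1)$-cycle in $G$, and the only cycle subgraph of an induced cycle is the whole cycle itself. If the whole lower cycle lies in $H$, then each mountain already has two colored lower vertices, exhausting its budget of two and forbidding any upper vertex from $H$. So $H$ equals the lower $(p+1)$-cycle in this case.

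The main combinatorial step is the path case without the bottom edge. For such a component $P$, I claim the mountains containing consecutive edges of $P$ form a strictly monotonic sequence, which will force strictly increasing vertex indices. The key observation is that two consecutive edges of $P$ cannot lie in the same mountain: otherwise their three distinct endpoints would give three vertices of one $k$-clique in $H$, violating the budget of two. So each pair of consecutive edges lies in two distinct mountains that share their common path vertex, which must be a shared lower vertex $u_i$; in particular the two mountain indices differ by exactly $1$. A back-and-forth pattern $\ldots, m, m\pm1, m, \ldots$ is impossible, since both transitions would force the shared path vertex at two distinct positions on $P$ to equal the same lower vertex. Thus the mountain sequence is strictly monotonic; orienting $P$ so that it is increasing, the interior path vertices are exactly $u_{m_1}, u_{m_1+1}, \ldots$ in left-to-right order, and the two endpoints lie in the first and last mountain respectively, necessarily to the left of $u_{m_1}$ and to the right of the last interior $u$-vertex. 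The resulting index sequence along $P$ is therefore strictly increasing.

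For a path $P$ that uses the bottom edge $v_1 v_n$, I would remove that edge to split $P$ into (at most) two subpaths, each avoiding the bottom edge. The previous step gives each subpath strictly increasing indices, and since $v_1$ and $v_n$ realize the extreme indices, $v_n$ must be the last vertex of one subpath while $v_1$ is the first vertex of the other. Concatenating via $v_n \to v_1$ then yields precisely the wrap-around form of the lemma: indices strictly increasing up to $n$, resetting to $1$, and strictly increasing again. The main obstacle throughout is the strict monotonicity argument in the third paragraph, where the two-per-mountain budget is used most sharply to rule out both repetition of a mountain and reversal between adjacent mountains; the remaining bookkeeping with the left-to-right vertex ordering is then straightforward.
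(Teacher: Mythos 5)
Your proof is correct, but it takes a more self-contained route than the paper for the key structural step. The paper first strips off the bottom edge to obtain $G'$, observes that $G'$ is a natural unit interval graph, and then simply cites Lemma~4.4 of Shareshian--Wachs to conclude that any component avoiding the bottom edge is a path with increasing indices; the cycle case and the bottom-edge case (splitting the component into the two $G'$-pieces containing $v_1$ and $v_n$ and concatenating) are handled as in your last two paragraphs. You instead reprove the unit-interval input from scratch: the ``at most two vertices per clique'' budget gives maximum degree $2$ in the colored subgraph (hence paths and cycles), upper vertices have degree at most $1$ so any cycle sits inside the induced lower $(p+1)$-cycle, and the strict monotonicity of the sequence of mountains through which a path travels forces increasing indices. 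What the paper's approach buys is brevity and a clear link to the existing unit-interval machinery (which is reused later in Proposition~\ref{prop:K}); what yours buys is an elementary, citation-free argument whose clique-budget and monotonicity ideas are specific to the mountain structure. One shared boundary remark: when $v_n$ happens to be an endpoint of a component using the bottom edge, both your concatenation and the paper's produce a path of the wrap-around form with $k=1$ (e.g.\ $v_n-v_1-v_2$), which sits outside the stated range $2\le k\le j-1$; this is a minor issue with the lemma's statement rather than with either argument, and it does not affect how the lemma is used.
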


\begin{proof}
    If the $(a,a+1)$-colored subgraph contains all of the lower vertices (which is only possible when $p$ is odd), then no other vertex in any $k$-clique can be colored $a$ or $a+1$. Thus the subgraph is a cycle of length $p+1$. 
    
    Define $G'$ to be the graph $G$ without the bottom edge, so $G'$ is a natural unit interval graph. If vertices $v_1$ and $v_n$ are not both colored at least one of $a$ or $a+1$, then the $(a,a+1)$-colored subgraph is contained in $G'$. By \cite{SW16} (Lemma 4.4), the $(a,a+1)$-colored subgraph is a disjoint union of paths, each of the form $v_{i_1}-v_{i_2}-\cdots-v_{i_j}$, with $i_1<i_2<\cdots<i_j$.

    Now suppose that $v_1$ and $v_n$ are colored $a$ or $a+1$, and not every lower vertex is colored $a$ or $a+1$. Then in $G'$, vertices $v_1$ and $v_n$ are in different connected components of the $(a,a+1)$-colored subgraph. Again, since $G'$ is a natural unit interval graph, these connected components form paths $v_1 = v_{i_{k+1}} - \cdots - v_{i_{j}}$ and $v_{i_1} - \cdots - v_{i_{k}} = v_n$ such that $i_{k+1} < \cdots <i_j$ and $i_1<\cdots < i_k$. In $G$, this yields a path of the form $v_{i_1}-v_{i_2}-\cdots-v_{i_j}$, which includes the bottom edge. 
\end{proof}

Notice that for any coloring of a $(p,k)$-mountain graph, if the $(a,a+1)$-colored subgraph contains a path, then the middle vertices of the path cannot be upper vertices of the mountain graph.

Let $G$ be a $(p,k)$-mountain graph, and define $K_{a,a+1}(G)$ to be the set of proper colorings of $G$ such that the $(a,a+1)$-colored subgraph does not include the bottom edge. 

\begin{prop}\label{prop:K}
    There is an ascent-preserving involution on $K_{a,a+1}(G)$ which swaps the number of occurrences of the colors $a$ and $a+1$.
\end{prop}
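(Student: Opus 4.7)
The plan is to construct an involution $\iota: K_{a,a+1}(G) \to K_{a,a+1}(G)$ that acts independently on each connected component of the $(a,a+1)$-colored subgraph of a given coloring $\kappa$. The crucial input is Lemma \ref{lem:mountainsubgraphs}: since the bottom edge is excluded from the $(a,a+1)$-subgraph for any coloring in $K_{a,a+1}(G)$, each component is a path $v_{i_1} - v_{i_2} - \cdots - v_{i_j}$ with strictly increasing indices, and the colors must alternate between $a$ and $a+1$ along this path. The proposed $\iota$ swaps $a \leftrightarrow a+1$ on every component whose vertex count $j$ is odd, and leaves even-length components untouched.

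Granted the definition, four properties need to be verified. (i) The image lies in $K_{a,a+1}(G)$: alternation is preserved on each swapped component; any edge joining a recolored vertex to an outside vertex has the outside color in $\N \setminus \{a, a+1\}$, so the endpoints remain distinctly colored; and the set of vertices whose color lies in $\{a, a+1\}$ is unchanged, which keeps the bottom edge outside the $(a,a+1)$-subgraph. (ii) The map is plainly an involution since two applications restore every color. (iii) Writing $N_a$ and $N_{a+1}$ for the numbers of vertices colored $a$ and $a+1$, the map $\iota$ swaps these counts: on an odd-length component the imbalance $|\{a\text{'s}\}| - |\{(a+1)\text{'s}\}|$ equals $\pm 1$ and is negated by the swap, while on an even-length component the imbalance is $0$, so globally $N_a - N_{a+1}$ is negated while $N_a + N_{a+1}$ is fixed.

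The remaining step---and the main content of the proof---is ascent preservation. The ascent status of any edge between a swapped vertex $v$ and an outside vertex $u$ of color $c \notin \{a, a+1\}$ is preserved, since $c$ lies either below both $a$ and $a+1$ or above both. The real content is an internal count within each path: for odd $j$, both alternating colorings $a, a+1, a, \ldots, a$ and $a+1, a, a+1, \ldots, a+1$ of $j$ vertices contribute exactly $(j-1)/2$ ascents among the $j-1$ edges, so swapping is ascent-neutral; for even $j$, the two alternations give ascent counts differing by one, which is precisely why even-length components must be left alone---and fortunately those already have balanced $a$- and $(a+1)$-counts. The main obstacle is carrying out this parity count cleanly and confirming that no hidden boundary cases exist: I would note that any two adjacent vertices both colored in $\{a, a+1\}$ must lie in the same component of the $(a,a+1)$-subgraph, so there is no subtle interaction between distinct components to worry about.
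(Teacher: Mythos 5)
Your proposal is correct and follows essentially the same route as the paper: both use Lemma \ref{lem:mountainsubgraphs} to reduce to increasing-index alternating paths and then swap $a \leftrightarrow a+1$ exactly on the odd-vertex components (the paper attributes this involution to the proof of Theorem 4.5 of Shareshian--Wachs). Your explicit count of $(j-1)/2$ ascents for either alternating coloring of an odd path just fills in the detail the paper summarizes by noting such paths have an even number of edges.
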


\begin{proof}
    As above, define $G'$ to be the graph $G$ without the bottom edge. Since $G'$ is a natural unit interval graph, the above statement follows from the proof of \cite{SW16}, Theorem 4.5. We summarize this argument here for clarity. 

    From Lemma \ref{lem:mountainsubgraphs}, for $\kappa \in K_{a,a+1}(G)$, each connected component of the $(a,a+1)$-colored subgraph is a path of the form $v_{i_1}-v_{i_2}-\cdots - v_{i_j}$, with $i_1<i_2<\cdots < i_j$. Let $\psi_a(\kappa)$ be the coloring of $G$ obtained by swapping $a$ and $a+1$ exactly when the number of vertices in the connected path is odd (if this number is even, leave the coloring of the component unchanged). Notice that $\psi_a(\kappa)$ is still a proper coloring. 

    Since each path with an even number of vertices contains the same number of vertices colored $a$ and $a+1$, it follows that $\psi_a(\kappa)$ swaps the number of instances of colors $a$ and $a+1$. The paths with an odd number of vertices have an even number of edges, so swapping $a$ and $a+1$ preserves the number of ascents in these paths. Further, since $a$ and $a+1$ are adjacent colors, ascents between a vertex in one of these paths and a vertex colored $b$ with $b\neq a$ and $b\neq a+1$ are preserved. 
\end{proof}

Now define $L_{a,a+1}(G)$ to be the set of proper colorings of $G$ such that the $(a,a+1)$-colored subgraph includes the bottom edge.

\begin{prop}\label{prop:lake}
     There is an ascent-preserving automorphism on $L_{a,a+1}(G)$ which swaps the number of occurrences of the colors $a$ and $a+1$. 
\end{prop}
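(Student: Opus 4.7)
My plan is to extend the involution of Proposition~\ref{prop:K} to $L_{a,a+1}(G)$ by treating the bottom-edge-containing component of the $(a,a+1)$-subgraph with a special rule. By Lemma~\ref{lem:mountainsubgraphs}, this bottom component is either the full lower $(p+1)$-cycle of $G$ or a wrap-around path with $j$ vertices whose parameters $(j,k)$ are as in the lemma. All other components of the $(a,a+1)$-subgraph are paths with strictly increasing indices, so I would first apply the Proposition~\ref{prop:K} rule to each of them: swap $a \leftrightarrow a+1$ on each odd-length component and leave each even-length one unchanged. This step preserves ascents on each such component and flips the $a,a+1$ counts on each odd-length component.

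Next I would handle the bottom component, case-by-case. If it is the full lower cycle, then $p+1$ must be even for a proper $2$-coloring, and each of $a$ and $a+1$ appears exactly $(p+1)/2$ times; the cycle has balanced counts, so I leave it unchanged. If it is a wrap-around path with $j$ vertices, the alternating $2$-coloring has counts $j/2$ each when $j$ is even (balanced; leave unchanged) and counts off by one when $j$ is odd. In the odd-$j$ subcase, the counts need to be flipped, which requires a nontrivial modification: I would define a ``flip'' operation on the bottom path that uses the palindromic alternating color pattern together with the left-right reflection symmetry $v_i \mapsto v_{n+1-i}$ of the mountain graph (which fixes the bottom edge and sends the wrap-around parameters $(j,k) \leftrightarrow (j,j-k)$). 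Carefully combining this reflection with a local color swap of $a \leftrightarrow a+1$ on the bottom path should yield a proper coloring in $L_{a,a+1}(G)$ with the required count flip.

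The principal obstacle is the odd-$j$ subcase: defining the flip operation precisely and verifying that it preserves the total ascent count. A naive reflection of the whole graph would toggle the ascent status of every edge, while a naive color swap would toggle only those edges in the $(a,a+1)$-subgraph, so the two effects do not generally cancel. The required flip is likely a more nuanced operation, involving a chain of color changes through adjacent vertices starting from an endpoint of the bottom path, whose effect on ascents is controlled by the palindromic structure of the odd-$j$ alternating coloring and by the fact that vertices outside the bottom component that are adjacent to it cannot be colored $a$ or $a+1$. Verifying that such a chain always exists within the mountain-clique structure and that the combined operation preserves the total ascent count involves a detailed case analysis on $j$, $k$, and the positions of the bottom path's endpoints inside the mountains.
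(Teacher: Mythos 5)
Your reduction to the bottom-edge component is reasonable, and the balanced cases (the full lower cycle, and a wrap-around path with an even number of vertices) are indeed harmless. But the odd case is not a loose end to be patched later; it is the entire content of the proposition beyond Proposition~\ref{prop:K}, and your proposal does not contain a construction for it. Note first that the difficulty is quantitative, not just plausible: for a wrap-around path $v_{i_1}-\cdots-v_{i_k}=v_n$, $v_1=v_{i_{k+1}}-\cdots-v_{i_j}$ with $j$ odd, the bottom edge is traversed against the vertex order, and a direct computation shows that swapping $a\leftrightarrow a+1$ on this component changes the ascent count by exactly $\pm 2$ (the sign depending on the parity of $k$), so no purely local color swap can work. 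Your proposed remedy, a left-right reflection of the graph combined with a color swap, cannot be applied ``locally'' to the bottom component either: reflecting the graph moves every other vertex, permutes the remaining components of the $(a,a+1)$-subgraph, toggles ascent status on edges outside the component, and scrambles the multiplicities of the colors different from $a,a+1$, so the claim that a careful combination ``should yield'' the required coloring is exactly the unproved step. You acknowledge this yourself (``the required flip is likely a more nuanced operation\dots involves a detailed case analysis''), which means the proof stops where the real argument has to begin.

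For comparison, the paper avoids any local surgery on the bottom component and instead builds a global composite map: first $\cycle$ is applied $a-1$ times so that the bottom edge carries the colors $\{1,2\}$ (Lemma~\ref{lem:cycle}, which requires the nontrivial repositioning of upper vertices inside cliques to keep ascents fixed), then the $\reflect$ map of Lemma~\ref{lem:reflect} is applied --- this is the rigorous version of your reflection idea, and its ascent-preservation already needs a delicate pairing of ``left-colored'' and ``right-colored'' cliques --- then the damage done to the content of the colors $3,\ldots,n$ is repaired by a sequence of Proposition~\ref{prop:K} involutions (legitimate because for those colors the $(i,i{+}1)$-subgraph avoids the bottom edge), and finally $\cycle^{-1}$ is applied $a-1$ times. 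Every one of these ingredients, and in particular the clique-level bookkeeping that makes reflection ascent-preserving, is missing from your sketch, so as it stands the proposal has a genuine gap rather than an alternative proof.
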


To prove this, we first establish several lemmas about operations that we will put together to show the symmetry.

\begin{definition}
    Let $a>1$, and let $\kappa\in L_{a,a+1}(G)$ be a coloring with maximal color $c$. We define a new coloring $\cycle(\kappa) \in L_{a-1,a}(G)$ as follows: 
    \begin{itemize}
        \item Change all vertices colored $1$ to color $c+1$.
        \item For each vertex colored $c+1$:
        \begin{itemize}
            \item If the vertex is a lower vertex, do nothing.
            \item If the vertex is an upper vertex, suppose that it is the $i$-th upper vertex from the left in its clique. Then reorder the upper vertices in the clique so that this vertex is the $i$-th upper vertex from the right, while the relative order of the other vertices is preserved. 
        \end{itemize}
        \item Reduce the value of all colors by $1$.
    \end{itemize} 
\end{definition}

Note that the cycle operation applies a cyclic permutation to the content of $\kappa$.  An example of the construction $\cycle(\kappa)$ is given in Figure \ref{fig:34-mountain-cycle}. 

\begin{figure}
    \centering
\includegraphics{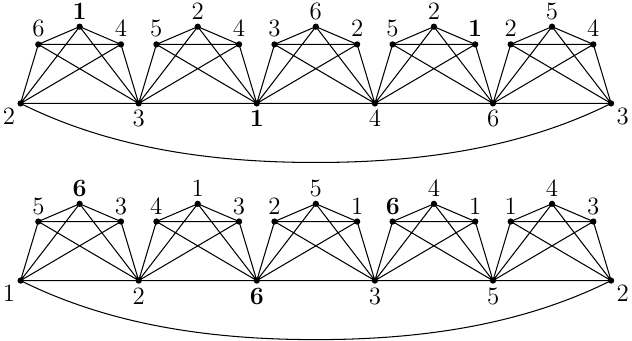}

    \caption{\label{fig:34-mountain-cycle}Above, an element $\kappa \in L_{2,3}(G)$, and below, the output $\cycle(\kappa) \in L_{1,2}(G)$.}
\end{figure}

\begin{lem}\label{lem:cycle}
    Let $a>1$. Then $\cycle \colon L_{a,a+1}(G) \to L_{a-1,a}(G)$ is an ascent-preserving bijection.
\end{lem}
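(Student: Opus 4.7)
My plan is to establish three properties of $\cycle$: well-definedness (the output is a proper coloring in $L_{a-1,a}(G)$), ascent preservation, and bijectivity by exhibiting an explicit inverse. Well-definedness is largely bookkeeping: the net effect on colors is the cyclic permutation $1 \mapsto c$, $t \mapsto t-1$ for $t \ge 2$, which is a bijection on $\{1,\ldots,c\}$, while the intra-clique rearrangement in step 2 only permutes colors within a single $k$-clique (which is complete). Propriety is therefore preserved; in particular, upper vertices have no adjacencies outside their own clique, so step 2 cannot break any edge. Since $a \ge 2$, the endpoints $v_1, v_n$ of the bottom edge have colors in $\{a, a+1\} \subseteq \{2, 3, \ldots\}$ and are untouched by steps 1 and 2; step 3 then sends their colors to $\{a-1, a\}$, placing the bottom edge in the $(a-1,a)$-colored subgraph of $\cycle(\kappa)$.

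The heart of the proof is ascent preservation, which I would verify edge by edge. The bottom edge is unproblematic since its two endpoints are shifted uniformly. All other edges lie inside some $k$-clique $C$, and I argue within each such $C$. If no vertex of $C$ is colored $1$ then every color in $C$ decreases by $1$, so all pairwise comparisons (hence ascents) are preserved. Otherwise, let $v$ be the unique color-$1$ vertex of $C$. When $v$ is an upper vertex at position $j \in \{2,\ldots,k-1\}$, as the minimum color it contributes $k-j$ ascents in $\kappa$ (one to each vertex to its right), and after being moved to position $k+1-j$ with the new maximum color $c$ it again contributes $k-j$ ascents (one from each vertex to its left). The crucial ingredient is then to check that the rearrangement induces an \emph{order-preserving} bijection from the other vertices' old positions $\{1,\ldots,k\}\setminus\{j\}$ to their new positions $\{1,\ldots,k\}\setminus\{k+1-j\}$, so that the uniform shift on their colors preserves all their pairwise ascent comparisons. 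When $v$ is a lower vertex of $C$, the hypothesis $a \ge 2$ forces $v$ to be an interior shared lower vertex, hence simultaneously the right endpoint of one clique and the left endpoint of its neighbor; the change in ascent contribution from $v$ is $+(k-1)$ in one of these cliques and $-(k-1)$ in the other, so the changes cancel globally.

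For bijectivity I would exhibit an inverse $\cycle^{-1}\colon L_{a-1,a}(G) \to L_{a,a+1}(G)$ defined by adding $1$ to every color, then for each upper vertex now colored $c+1$ moving it from the $i$-th position from the right among upper vertices in its clique back to the $i$-th position from the left, and finally relabeling $c+1$ as $1$. A direct verification shows $\cycle^{-1} \circ \cycle = \mathrm{id}$ and $\cycle \circ \cycle^{-1} = \mathrm{id}$. I expect the hardest step to be the upper-vertex ascent case: tracking how the intra-clique rearrangement permutes positions and verifying that the induced map on non-special positions is order-preserving is the delicate combinatorial check, and it is precisely where the prescription ``$i$-th from the left $\leftrightarrow$ $i$-th from the right'' is essential---any other choice of target position for the moved vertex would, in general, alter the ascent count inside the clique.
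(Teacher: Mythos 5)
Your proof is correct and follows essentially the same route as the paper: ascent preservation is checked clique-by-clique in the same three cases (no color-$1$ vertex, a color-$1$ upper vertex moved from the $j$-th to the $(k+1-j)$-th position, and a color-$1$ interior lower vertex whose $k-1$ ascents transfer from its right clique to its left clique), plus the bottom edge, with bijectivity obtained by inverting each step. The only caveat---shared equally by the paper's one-line bijectivity argument---is that your explicit inverse implicitly requires knowing the maximal color $c$ (equivalently the content) of the original coloring, so $\cycle$ is genuinely a bijection only content-class by content-class, which is exactly what is used in Proposition~\ref{prop:lake}.
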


\begin{proof}
    As each step of $\cycle$ is invertible, the map $\cycle$ is bijective. It remains to show that it is ascent preserving. Suppose that $\kappa\in L_{a,a+1}(G)$ has largest color $c$. We will show that the number of ascents within each clique is preserved, and that any ascent formed by the bottom edge is preserved. 

    First, suppose that $C$ is a clique in $\kappa$ with no vertex colored $1$. Then the only change to $C$ is that the color of every vertex is reduced by $1$, which preserves the relative orders of the vertices incident to each edge. Thus $\cycle(\kappa)$ has the same number of ascents within this clique.

    Next, suppose that $C$ is a clique of $\kappa$ containing at least one (and thus, exactly one) vertex colored $1$. We have two cases. If $v$ is a lower vertex, then $v$ cannot be the leftmost vertex $v_1$ or rightmost vertex $v_n$, since $\kappa\in L_{a,a+1}$ implies $\kappa(v_1)$ and $\kappa(v_n)$ are $a$ or $a+1$. Thus $v$ is connected to two of the cliques of $G$. Since $v$ is colored $1$, it forms an ascent with each of the $k-1$ vertices in the clique to its right, and no ascents with the vertices in the clique to its left. When the color of $v$ is changed to $c+1$, since this color is greater than any present in $\kappa$, $v$ forms an ascent with each vertex in the clique to its left, and none with the clique on its right. Since $\kappa$ is a proper coloring, no other vertex in the cliques adjacent to $v$ could have color $1$, so no other vertices gain the color $c+1$. Reducing the value of each color by $1$ again preserves the relative order of the vertices on each edge, so $\cycle(\kappa)$ has the same number of ascents within the cliques adjacent to $v$. 

    If $v$ is an upper vertex, then since $v$ has color $1$, it forms an ascent with the vertices to its right within its clique - suppose there are $\ell$ such ascents. Recoloring $v$ with $c+1$, and then moving $v$ so that it is to the left of $\ell$ of the vertices in the clique preserves the number of ascents formed on an edge incident to $v$. Since the relative orders of the other vertices in the clique are preserved, the ascents between these vertices are preserved. Reducing the color of each vertex by $1$ again preserves the number of ascents, so $\cycle(\kappa)$ has the same number of ascents within the clique containing $v$. 

    Lastly, for the bottom edge, the two vertices incident to the bottom edge are not labeled $1$ because $a>1$ and $\kappa\in L_{a,a+1}(G)$. Reducing the value of these colors by $1$ does not change whether the bottom edge forms an ascent. Combining these cases, we get that $\asc(\kappa) = \asc(\cycle(\kappa))$, as desired. 
\end{proof}

\begin{definition} \label{def:reflect}
    Suppose $a=1$, and let $\kappa\in L_{a,a+1}(G)=L_{1,2}(G)$ with largest color $c$.  Define $\reflect(\kappa)$ to be the coloring obtained from $\kappa$ as follows:
    \begin{itemize}
        \item Reflect $\kappa$ horizontally. (i.e. color vertex $i$ with the color $\kappa(n+1-i)$.)
        \item Recolor all $1$'s as $2$'s and vice versa.
        \item Recolor all $3$'s as $c$'s, all $4$'s as $c-1$'s, and so on.
        \item Within each $k$-clique $C$ of $\kappa$, consider the original positions of any $1$ or $2$ appearing in the upper entries (which can be no positions, a single index $i$, or two indices $i$ and $j$ from left to right).  Then move any new $1$'s or $2$'s in the image of $C$ under reflection to positions $i$ or $i,j$ in left to right order, keeping the relative order of the other upper entries the same.        
    \end{itemize}
\end{definition}

See Figure ~\ref{fig:reflect-12} for an example of this construction.

\begin{figure}
\centering
\includegraphics{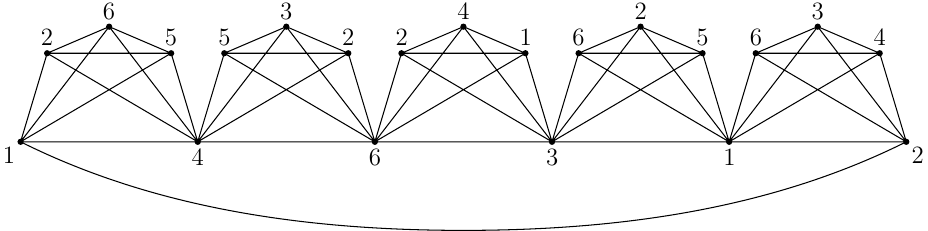}
    \caption{
    \label{fig:reflect-12} The output $\reflect(\cycle(\kappa))$ for $\kappa$ as in Figure ~\ref{fig:34-mountain-cycle}.}
\end{figure}

\begin{lem} \label{lem:reflect}
    The map $\reflect:L_{1,2}(G)\to L_{1,2}(G)$ is an ascent-preserving automorphism. 
\end{lem}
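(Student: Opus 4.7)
The plan is to verify that $\reflect$ is (a) well-defined as a map $L_{1,2}(G)\to L_{1,2}(G)$, (b) an involution (hence an automorphism), and (c) ascent-preserving. Well-definedness is immediate once one checks that steps $1$--$3$ are bijections preserving properness (horizontal reflection is a graph automorphism and $\sigma$ defined by $\sigma(1)=2,\sigma(2)=1,\sigma(k)=c+3-k$ for $k\ge 3$ is a color bijection), and that step $4$ only permutes upper colors within a single clique (so properness is maintained, since every clique has distinct colors and upper vertices are never shared between cliques). The image lies in $L_{1,2}(G)$ because $v_1$ and $v_n$ are lower vertices, are unaffected by step $4$, and their new colors lie in $\sigma(\{1,2\})=\{1,2\}$.

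For ascent preservation, I would classify each edge according to $\kappa$ as \textbf{type-a} (both endpoints in $\{1,2\}$), \textbf{type-b} (both in $\{3,\ldots,c\}$), or \textbf{type-c} (mixed). A direct calculation shows the bottom edge's ascent is preserved. For type-a and type-b edges \emph{within a single clique} $C$ with image $C^*=\phi(C)$, I would argue preservation cliquewise. For type-a: a case analysis on whether the colors $1$ and $2$ sit at lower or upper positions of $C$ shows that step $4$ places the upper $1$'s and $2$'s of $C^*$ at the same positions, and with the same colors, as in $C$; the internal $\sigma$-swap handles the lower positions; so the relative order of $1$ and $2$ agrees. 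For type-b: the $\ge 3$ colors in $C^*$ occupy the non-$\{1,2\}$ positions in the order obtained from $C$ by reversal followed by $\sigma$ restricted to $\{3,\ldots,c\}$ (which reverses order), and these two reversals compose to preserve the ascent count of the subsequence.

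The main obstacle is type-c ascents, which are not preserved cliquewise. Because step $4$ preserves the upper $\{1,2\}$-positions, the binary pattern of $C^*$ (recording whether each position holds a color in $\{1,2\}$ or in $\{3,\ldots,c\}$) equals that of $C$ with positions $1$ and $k$ swapped. A direct count then shows that the number of type-c ascents in $C^*$ minus the number of type-c ascents in $C$ equals $(k-1)\bigl(\beta(l_1)-\beta(l_2)\bigr)$, where $\beta(c)=1$ if $c\ge 3$ and $0$ otherwise, and $l_1,l_2$ are the colors of the two lower vertices of $C$. To pass to a global equality, let $L_0=\kappa(v_1),L_1,\ldots,L_{p-1},L_p=\kappa(v_n)$ denote the colors of the lower vertices along the $(p+1)$-cycle, so that $L_{j-1}$ and $L_j$ are the lower colors of mountain $C_j$. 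The type-c discrepancy for $C_j$ is then $(k-1)(\beta(L_{j-1})-\beta(L_j))$, and summing over $j=1,\ldots,p$ telescopes to $(k-1)(\beta(L_0)-\beta(L_p))=0$, since $\kappa\in L_{1,2}(G)$ forces $L_0,L_p\in\{1,2\}$. Combined with type-a, type-b, and the bottom edge, this yields $\asc(\reflect(\kappa))=\asc(\kappa)$.

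Finally, $\reflect$ is an involution because steps $1$, $2$, and $3$ are each self-inverse, and step $4$ is symmetric: the upper $\{1,2\}$-positions of clique $\phi(C)$ in $\reflect(\kappa)$ coincide with those of $C$ in $\kappa$ by construction, so reapplying $\reflect$ inverts step $4$'s permutation and returns $\kappa$. The key technical subtlety will be the clique-level imbalance in the type-c count, which requires the global telescoping argument described above.
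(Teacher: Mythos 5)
Your proof is correct, and its skeleton (bijectivity from invertibility of the steps, ascent preservation by splitting edges according to whether their endpoints are colored in $\{1,2\}$ or in $\{3,\ldots,c\}$) matches the paper's; the difference is in how the mixed edges are counted. The paper pairs each ``left-colored'' clique (only its left lower vertex colored $1$ or $2$) with the nearest ``right-colored'' clique to its right and checks that each pair contributes the same number of mixed ascents before and after $\reflect$, handling upper--upper mixed edges separately via the preserved upper $\{1,2\}$-positions. You instead compute a signed per-clique discrepancy $(k-1)(\beta(l_1)-\beta(l_2))$ --- which is correct, since the mixed-ascent count of a clique depends only on its small/big position pattern, and swapping the two lower entries of that pattern changes the count by exactly $\pm(k-1)$, the upper--upper contributions cancelling --- and then telescope along the lower vertices of the $(p+1)$-cycle, with $\kappa\in L_{1,2}(G)$ used only to force $\beta=0$ at $v_1$ and $v_n$. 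This telescoping is a genuinely different and arguably cleaner way to globalize the local count, since it avoids constructing the left/right pairing. Two minor points: in your type-a analysis the upper $1$'s and $2$'s of the image clique occupy the same positions but not always the same colors (when exactly one of $1,2$ sits on an upper vertex, the values $1$ and $2$ get interchanged); the relative order of $1$ and $2$, hence the ascent, is still preserved, so the conclusion stands. Also, $\reflect$ is an involution only under the implicit convention that $\kappa$ uses every color $1,\ldots,c$ (otherwise the largest color can change under $\reflect$ and the third recoloring step does not undo itself); the paper's own claim that each step is invertible rests on the same convention, so this is bookkeeping rather than a gap.
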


\begin{proof}
As each step of $\reflect$ is invertible, it is bijective. It remains to show that it is ascent preserving. As in Definition ~\ref{def:reflect}, let $\kappa \in L_{1,2}(G)$ and let $c$ denote the largest color in $\kappa$. Consider an edge $e$ of $G$ whose vertices are colored by $\kappa$ with colors between $3$ and $c$ inclusive. In $\reflect(\kappa)$, these vertices are sent to a new edge $\tilde{e}$ of $G$. The horizontal reflection of the colors in $\kappa$ followed by the swapping of all colors between $3$ and $c$ preserves the relative order of the colors of vertices incident to the edge $e$. Thus, $e$ contributes to $\asc(G)$ if and only if $\tilde{e}$ contributes to $\asc(\reflect(\kappa))$. 

Similarly, any edge of $G$ whose vertices are colored with only the numbers $1$ and $2$ in $\kappa$ contributes to $\asc(\kappa)$ if and only if its image in $\reflect(\kappa)$ contributes to $\asc(\reflect(\kappa))$. Note that this implies the bottom edge under $\kappa$ contributes an ascent if and only if it contributes an ascent in $\reflect(\kappa)$.

We now examine the number of ascents in every $k$-clique under both colorings. By the previous paragraphs it suffices to only consider edges where exactly one vertex is colored $1$ or $2$. Let $C$ be a $k$-clique of $G$ under the coloring $\kappa$ and let $\tilde{C}$ be its image under the coloring $\reflect(\kappa)$. We break into cases based on the colors of the lower vertices of $C$.

If the lower vertices of $C$ are colored $1$ and $2$ in any order, then the edges where exactly one of the vertices is colored $1$ or $2$ are precisely the edges between the upper and lower vertices of $C$ and $\tilde{C}$. As $1$ and $2$ are the smallest colors, the number of ascents for edges between upper vertices and lower vertices in both $C$ and $\tilde{C}$ is $k-2$.

If the lower vertices of $C$ are colored between $3$ and $c$ inclusive, then the positions of any vertices colored $1$ or $2$ in $C$ are the same in $\tilde{C}$. This implies that the number of ascents for edges where exactly one of the vertices is colored $1$ or $2$ is the same in both $C$ and $\tilde{C}$.  

Now assume exactly one of the lower vertices of $C$ is colored $1$ or $2$. We define a $k$-clique to be ``left-colored" if the only lower vertex colored $1$ or $2$ in the $k$-clique is its left lower vertex. Similarly define the notion of being ``right-colored". There exists a natural pairing of left-colored $k$-cliques to right-colored $k$-cliques by pairing a left-colored $k$-clique with the leftmost right-colored $k$-clique that sits to its right in $G$. Note that as $v_1$ and $v_n$ are colored $1$ and $2$ (in some order), such a pairing must exist. Moreover, any $k$-clique falling between a paired left-colored and right-colored $k$-clique cannot have any lower vertices colored $1$ or $2$. We show that the total number of ascents within a pair of such $k$-cliques is equal to the total number of ascents within its image in $\reflect(\kappa)$.

Without loss of generality, assume that $C$ is left-colored and let $D$ be its corresponding right-colored $k$-clique. Additionally, let $\tilde{C}$ and $\tilde{D}$ be the images of $C$ and $D$ under the coloring $\reflect(\kappa)$ respectively. Recall that the position of any upper vertex colored $1$ or $2$ within a $k$-clique is the same as its position within the image of the $k$-clique in $\reflect(\kappa)$. Thus, the number of ascents for edges between upper vertices where exactly one vertex is colored $1$ or $2$ is the same for $C$ and $\tilde{C}$ and for $D$ and $\tilde{D}$. Moreover, $C$ and $\tilde{D}$ will each have an ascent between their lower vertices while $\tilde{C}$ and $D$ do not.

We now consider edges between upper and lower vertices where exactly one vertex is colored $1$ or $2$. If $C$ has no upper vertices colored $1$ or $2$, then there are $k-2$ ascents between the left lower vertex and upper vertices of $C$. If however, $C$ has an upper vertex $w$ colored $1$ or $2$, then there are $k-3$ ascents between the left lower vertex and upper vertices not colored $1$ or $2$. Note that the edge between $w$ and the right lower vertex of $C$ also forms an ascent. In either case, there are $k-2$ ascents for edges between upper and lower vertices of $C$ where exactly one vertex is colored $1$ or $2$. Similarly, $\tilde{D}$ has $k-2$ ascents for edges between upper and lower vertices where exactly one vertex is colored $1$ or $2$. A routine check tells us that there are no such ascents for the cliques $D$ and $\tilde{C}$. Therefore, the total number of ascents in $C$ and $D$ is the same as the total number of ascents in $\tilde{C}$ and $\tilde{D}$.

Combining all of the above cases gives us the desired result $\asc(\reflect(\kappa))=\asc(\kappa)$.
    
\end{proof}

We now have the tools to prove Proposition ~\ref{prop:lake}.

\begin{proof}
    Let $\kappa\in L_{a,a+1}(G)$, and suppose its color content is $$(c_1,\ldots,c_n).$$  
    We first apply $\cycle$ exactly $a-1$ times, so that $\kappa':=\cycle^{a-1}(\kappa)$ is in $L_{1,2}(G)$, with content $$(c_a,c_{a+1},c_{a+2},\ldots,c_n,c_1,\ldots,c_{a-1}),$$ 
    and has the same number of ascents as $\kappa$.  Then we apply $\reflect$ to form $\kappa''\in L_{1,2}(G)$ with content equal to $$(c_{a+1},c_a,c_{a-1},\ldots,c_1,c_n,\ldots,c_{a+2}).$$  Finally, fix a reduced word for the permutation that reverses entries 3 through $n$, say, $$(\sigma_3)(\sigma_4\sigma_3)(\sigma_5\sigma_4\sigma_3)\cdots (\sigma_{n-1}\cdots \sigma_3)$$
    We then apply the automorphisms from Proposition ~\ref{prop:K} to apply these transpositions $\sigma_i$ to the content in order, to obtain $\kappa'''$ with content 
    $$(c_{a+1},c_a,c_{a+2},\ldots,c_n,c_1,\ldots,c_{a-1}).$$
    Finally, we apply $\cycle^{-1}$ to $\kappa'''$ exactly $a-1$ times to obtain a coloring $\kappa''''$ with content 
    $$(c_1,\ldots,c_{a-1},c_{a+1},c_a,c_{a+2},\ldots,c_n).$$  The automorphism $\kappa\to \kappa''''$ is well-defined, bijective, and preserves ascents, because every step of the construction has these properties.  This completes the proof.
\end{proof}

Combining Propositions ~\ref{prop:K} and ~\ref{prop:lake}, we obtain the following result.

\begin{thm}
    Let $M_{p,k}$ be a $(p,k)$-mountain graph. Then $X_{M_{p,k}}(x;q)$ is symmetric.
\end{thm}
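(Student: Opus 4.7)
The plan is to deduce the theorem directly from Propositions \ref{prop:K} and \ref{prop:lake} using the standard criterion that a quasisymmetric function $\sum_\alpha c_\alpha M_\alpha$ is symmetric if and only if its coefficients $c_\alpha$ are invariant under every adjacent transposition $s_a$ of the parts of $\alpha$. Since $X_{M_{p,k}}(x;q)$ is manifestly quasisymmetric, and its $M_\alpha$-coefficient equals $\sum_\kappa q^{\asc(\kappa)}$ over proper colorings $\kappa$ of weight $\alpha$, it suffices to produce, for each $a\ge 1$, an ascent-preserving bijection on the set of all proper colorings of $G=M_{p,k}$ that swaps the multiplicities of colors $a$ and $a+1$ and leaves every other multiplicity fixed.

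To assemble this bijection, I would partition the set of all proper colorings of $G$ as the disjoint union $K_{a,a+1}(G)\sqcup L_{a,a+1}(G)$, according to whether the $(a,a+1)$-colored subgraph avoids or contains the bottom edge. Proposition \ref{prop:K} supplies an ascent-preserving involution of $K_{a,a+1}(G)$ that swaps the occurrences of $a$ and $a+1$, and Proposition \ref{prop:lake} supplies an ascent-preserving automorphism of $L_{a,a+1}(G)$ performing the same swap on its piece. Since both bijections perform the same multiplicity swap and each preserves its own domain, concatenating them gives a single ascent-preserving bijection on all proper colorings of $G$, which implies $c_\alpha=c_{s_a(\alpha)}$ for every composition $\alpha$ and every $a$.

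All of the substantive work has already been absorbed into the proofs of Propositions \ref{prop:K} and \ref{prop:lake} (the subtle piece being Proposition \ref{prop:lake}, where the $\cycle$, $\reflect$, and $K$-level swap operations had to be interleaved carefully to transpose adjacent colors across the bottom edge). At this final step there is no remaining obstacle; the only minor verification is that the partition $K\sqcup L$ is indeed preserved by the swap, which is immediate because membership in $L_{a,a+1}(G)$ depends only on which vertices receive a color from $\{a,a+1\}$, not on the choice between those two colors.
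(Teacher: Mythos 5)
Your proof is correct and is essentially the paper's own argument: the paper likewise deduces the theorem by combining Propositions \ref{prop:K} and \ref{prop:lake}, using that the disjoint decomposition into $K_{a,a+1}(G)$ and $L_{a,a+1}(G)$ yields an ascent-preserving bijection on all proper colorings swapping the multiplicities of $a$ and $a+1$, hence invariance of the $M_\alpha$-coefficients under adjacent transpositions.
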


\subsection{Bottomless Mountain graphs} \label{sec:bottomless}

From the family of $(p,k)$-mountain graphs, we construct another family of graphs whose chromatic quasisymmetric function is symmetric.

\begin{definition}
    For $k\geq 3$, the \textbf{$(p,k)$-bottomless mountain graph} $B_{p,k}$ is the $(p,k)$-mountain graph $M_{p,k}$ where all edges between lower vertices are removed except for the bottom edge.
\end{definition}

\begin{figure}[ht] \label{fig:54-botmount}
    \centering
\includegraphics{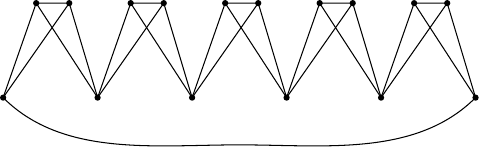}
    \caption{The $(5,4)$-bottomless mountain graph}
\end{figure}

We keep the same ordering on the vertices of $B_{p,k}$ as for $M_{p,k}$ as well as the definitions of the bottom edge, lower vertices, and upper vertices. To show the chromatic quasisymmetric function of $B_{p,k}$ is symmetric, it suffices to show that the ascent-preserving automorphisms presented in Section ~\ref{section:mountain} for the colorings of $M_{p,k}$ hold true for $B_{p,k}$ as well.

Let $L_{a,a+1}(B_{p,k})$ (resp. $K_{a,a+1}(B_{p,k})$) be the set of proper colorings such that the $(a,a+1)$-colored subgraph does (resp. does not) include the bottom edge. As removing the bottom edge from any bottomless mountain results in a natural unit interval order, the involution given in Proposition~\ref{prop:K} is also an ascent-preserving involution on $K_{a,a+1}(B_{p,k})$ that swaps the number of occurrences of the colors $a$ and $a+1$.

Lemma ~\ref{lem:cycle} clearly holds for the graphs $B_{p,k}$, that is the $\cycle$ map gives an ascent-preserving map from $L_{a,a+1}(B_{p,k})$ to $L_{a-1,a}(B_{p,k})$ for all $a > 1$. In order for the proof of Proposition~\ref{prop:lake} to pass through for the graphs $B_{p,k}$, it remains to show that the $\reflect$ map is ascent-preserving automorphism on $ L_{1,2}(B_{p,k})$.

\begin{lem} \label{lem:bottomlessreflect} Let $\kappa \in L_{1,2}(B_{p,k})$. Then $\asc(\reflect(\kappa))=\asc(\kappa)$.    
\end{lem}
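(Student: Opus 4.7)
The plan is to leverage Lemma \ref{lem:reflect} by observing that $B_{p,k}$ is obtained from $M_{p,k}$ by removing exactly $p$ edges, namely the edges connecting the two lower vertices within each individual mountain; the overall bottom edge joining $v_1$ to $v_n$ remains. Since the map $\reflect$ is defined purely in terms of positions and colors, the same formula defines a bijection on colorings of $B_{p,k}$, and one checks that a proper coloring of $M_{p,k}$ restricts to a proper coloring of $B_{p,k}$ (fewer edges to verify), so $\reflect$ is a well-defined automorphism of $L_{1,2}(B_{p,k})$. Since Lemma \ref{lem:reflect} gives $\asc_{M_{p,k}}(\kappa)=\asc_{M_{p,k}}(\reflect(\kappa))$, it suffices to show that the total ascent contribution of the $p$ removed edges is the same under $\kappa$ and under $\reflect(\kappa)$.

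To this end, I would classify each mountain into three types based on the colors of its two lower vertices in $\kappa$: (i) both lowers colored in $\{3,\ldots,c\}$, where $c$ is the largest color appearing; (ii) both lowers colored in $\{1,2\}$; or (iii) exactly one lower colored in $\{1,2\}$ (these are the left-colored and right-colored mountains from the proof of Lemma \ref{lem:reflect}). Let $\sigma$ denote the color swap used by $\reflect$, so $\sigma$ swaps $1 \leftrightarrow 2$ and sends $j \leftrightarrow c+3-j$ for $3 \le j \le c$. Because $\sigma$ reverses the order on each of $\{1,2\}$ and $\{3,\ldots,c\}$ separately, while horizontal reflection swaps the roles of left and right lower within each mountain, the lower-lower ascent contribution of any type (i) or type (ii) mountain is preserved mountain-by-mountain under $\reflect$.

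The interesting case is type (iii), which is also where the obstacle lies. Here the lower-lower ascent contribution is not preserved individually: a left-colored mountain always contributes $1$ (a color in $\{1,2\}$ sits to the left of a color in $\{3,\ldots,c\}$), while a right-colored mountain contributes $0$. However, $\reflect$ sends left-colored mountains bijectively to right-colored mountains and vice versa, so the total type (iii) contribution is preserved as long as the numbers of left-colored and right-colored mountains are equal in $\kappa$. This is exactly the pairing established in the proof of Lemma \ref{lem:reflect}: since $v_1$ and $v_n$ are both colored $1$ or $2$, pairing each left-colored mountain with the nearest right-colored mountain to its right gives a perfect matching, so the two counts agree.

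Assembling these three cases, the total ascent contribution of the $p$ removed edges is invariant under $\reflect$. Subtracting this common quantity from the equality $\asc_{M_{p,k}}(\kappa)=\asc_{M_{p,k}}(\reflect(\kappa))$ of Lemma \ref{lem:reflect} yields $\asc_{B_{p,k}}(\reflect(\kappa))=\asc_{B_{p,k}}(\kappa)$, which is the desired statement. The main subtlety is type (iii), where the clique-by-clique analysis fails and one must invoke the global pairing argument.
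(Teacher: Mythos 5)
There is a genuine gap. Your whole strategy rests on invoking Lemma \ref{lem:reflect} to get $\asc_{M_{p,k}}(\kappa)=\asc_{M_{p,k}}(\reflect(\kappa))$ for $\kappa\in L_{1,2}(B_{p,k})$ and then subtracting the contribution of the $p$ removed lower--lower edges. But Lemma \ref{lem:reflect} is a statement about \emph{proper colorings of $M_{p,k}$}, and the inclusion you cite goes the wrong way: every proper coloring of $M_{p,k}$ is proper on $B_{p,k}$, but the lemma you are proving quantifies over $L_{1,2}(B_{p,k})$, which is strictly larger. A proper coloring of $B_{p,k}$ may assign the \emph{same} color to the two lower vertices of a single mountain (they are no longer adjacent), e.g.\ both lower vertices colored $1$ with an upper vertex of that clique colored $2$. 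For such $\kappa$ the equality from Lemma \ref{lem:reflect} is simply not available, and its proof does not extend verbatim: the case analysis there assumes each clique has its lower vertices colored either ``$1$ and $2$ in some order,'' ``both in $\{3,\ldots,c\}$,'' or ``exactly one in $\{1,2\}$,'' and the left-colored/right-colored pairing argument breaks when a clique has both lower vertices sharing a color in $\{1,2\}$. This is precisely the new configuration that the paper's proof isolates and handles directly, checking that in that case both $C$ and $\tilde C$ contribute $k-3$ ascents from edges with exactly one endpoint colored $1$ or $2$.

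For the colorings that \emph{are} proper on $M_{p,k}$, your subtraction argument is fine and is a legitimate alternative to redoing the clique-by-clique count: types (i) and (ii) preserve the removed-edge contribution mountain-by-mountain, and the type (iii) contribution is preserved globally by the same left/right pairing used in Lemma \ref{lem:reflect}. So the fix is not to change your outline but to add the missing case: either extend Lemma \ref{lem:reflect}'s argument to colorings in which a mountain's two lower vertices share a color in $\{1,2\}$ (treating ascents of equal-colored edges as zero), or, as the paper does, argue directly on $B_{p,k}$ and verify the new configuration separately. As written, the proposal silently assumes $L_{1,2}(B_{p,k})\subseteq L_{1,2}(M_{p,k})$, which is false, so the key step does not go through.
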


\begin{proof}
As in Lemma ~\ref{lem:reflect} it suffices to consider edges where exactly one vertex is colored either $1$ or $2$ and the other vertex is not. To show that $\asc(\reflect(\kappa))=\asc(\kappa)$, we examine the number of ascents coming from these edges in every bottomless $k$-clique. Let $C$ be a bottomless $k$-clique of $G$ under the coloring $\kappa$ and let $\tilde{C}$ be its image under the coloring $\reflect(\kappa)$. We break into cases based on the colors of the lower vertices of $C$.

All cases follow directly from the proof of Lemma ~\ref{lem:reflect} with the exception of when the lower vertices are colored $1$ or $2$. In this case, it is possible for both lower vertices of $C$ to have the same color and for an upper vertex to be colored with the opposite color in $\{1,2\}$. The number of ascents in both $C$ and $\tilde{C}$ coming from edges where exactly one vertex is colored $1$ or $2$ is then $k-3$. Thus, $\asc(\reflect(\kappa))=\asc(\kappa)$ for all $\kappa \in L_{1,2}(B_{p,k})$.
\end{proof}

As the analogues of Lemmas ~\ref{lem:cycle} and ~\ref{lem:reflect} hold for the bottomless graphs $B_{p,k}$, the proof of Proposition ~\ref{prop:lake} gives an ascent-preserving automorphism on $L_{a,a+1}(B_{p,k})$ that swaps the number of occurrences of the colors $a$ and $a+1$. Pairing this ascent-preserving automorphism on $L_{a,a+1}(B_{p,k})$ with the ascent-preserving involution on $K_{a,a+1}(B_{p,k})$ gives an ascent-preserving automorphism on the set of all proper colorings of $B_{p,k}$ that interchanges the number of occurrences of the colors $a$ and $a+1$. This proves the desired symmetry.

\begin{thm}
    Let $B_{p,k}$ be a $(p,k)$-bottomless-mountain graph. Then $X_{B_{p,k}}(x;q)$ is symmetric. 
\end{thm}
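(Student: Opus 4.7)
The plan is to follow the blueprint from the $(p,k)$-mountain case in Section \ref{section:mountain} verbatim, checking that each step survives the deletion of the internal lower-vertex edges. As with the mountain graphs, symmetry of $X_{B_{p,k}}(x;q)$ reduces to producing, for every $a \ge 1$, an ascent-preserving automorphism on the set of proper colorings of $B_{p,k}$ that swaps the multiplicities of the colors $a$ and $a+1$. I would split the colorings into $K_{a,a+1}(B_{p,k})$ and $L_{a,a+1}(B_{p,k})$ according to whether the $(a,a+1)$-colored subgraph contains the bottom edge, and handle each piece separately.

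For $K_{a,a+1}(B_{p,k})$ I would observe that $B_{p,k}$ with the bottom edge deleted is a natural unit interval graph (a left-to-right concatenation of bottomless $k$-cliques glued at lower vertices), so the Shareshian--Wachs style involution recalled in Proposition \ref{prop:K} applies without change. For $L_{a,a+1}(B_{p,k})$ I would reuse the composite from Proposition \ref{prop:lake}: apply $\cycle^{a-1}$ to shift into $L_{1,2}(B_{p,k})$, apply $\reflect$, then use a reduced-word sequence of Proposition \ref{prop:K} transpositions to undo the reversal on content entries $3,\ldots,n$, and finally apply $\cycle^{-(a-1)}$. The only genuinely new content is to check that the two building blocks, $\cycle$ and $\reflect$, remain ascent-preserving on $B_{p,k}$. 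The $\cycle$ check goes through almost verbatim because its proof only manipulates the single vertex colored $1$ inside a clique, and each full $k$-clique of $B_{p,k}$ still sits on the same two lower vertices.

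The main obstacle I expect is the $\reflect$ step, which is handled by Lemma \ref{lem:bottomlessreflect}. Because the bottomless structure removes the edge between the two lower vertices of each $k$-clique, there is a brand-new case in the case analysis from Lemma \ref{lem:reflect}: both lower vertices of a clique may now carry the same color from $\{1,2\}$ while an upper vertex takes the opposite color. Here I would carefully count edges with exactly one endpoint in $\{1,2\}$ both in such a clique $C$ and in its image $\tilde{C}$, verifying that both contribute exactly $k-3$ ascents so the local count is unchanged. Once this local verification is in place, all other cases transfer directly from Lemma \ref{lem:reflect}, the pairing argument using left- and right-colored cliques still applies, and the composite automorphism on $L_{a,a+1}(B_{p,k})$ together with the involution on $K_{a,a+1}(B_{p,k})$ yields the required ascent-preserving, $(a,a+1)$-content-swapping automorphism, proving symmetry.
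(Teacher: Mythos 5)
Your proposal matches the paper's own argument: the same decomposition into $K_{a,a+1}(B_{p,k})$ and $L_{a,a+1}(B_{p,k})$, the same appeal to the unit interval involution after deleting the bottom edge, the same composite of $\cycle$, $\reflect$, and the Proposition \ref{prop:K} transpositions, and the same identification of the one genuinely new $\reflect$ case (both lower vertices of a bottomless clique sharing a color from $\{1,2\}$ with an upper vertex taking the other, each side contributing $k-3$ ascents), which is exactly Lemma \ref{lem:bottomlessreflect}. The proof is correct and essentially identical to the paper's.
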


\subsection{Mixed Mountain Graphs}

A $(p,k)$-mixed mountain graph is a $(p,k)$-mountain graph where some number of the $k$-clique mountains are replaced with bottomless $k+1$-clique mountains.

\begin{figure}
\begin{center}
    \includegraphics[scale = .9]{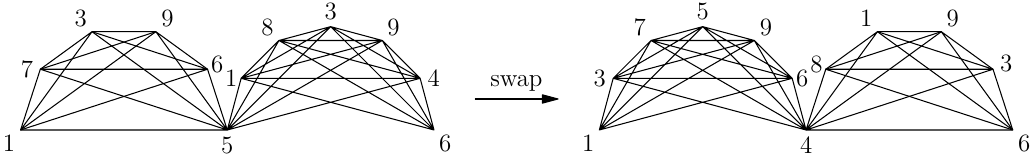}
\end{center}
\caption{\label{fig:swap} An example of the swap map, showing that we can swap a $k$-clique with an adjacent bottomless $k+1$-clique without changing the chromatic quasisymmetric function.}
\end{figure}

We wish to define the $\swap$ map, which will swap a $k$-clique with a bottomless $k+1$-clique to its right in a way that preserves ascents.  We first construct several auxiliary definitions.

Let $G$ be a mixed mountain graph.  Suppose $\mathcal{U}$ is a $k$-clique in $G$, and there is a bottomless $k+1$-clique $\mathcal{W}$ immediately to its right, with shared vertex $v\in \mathcal{U}\cap \mathcal{W}$.  Then we name the remaining vertices of $\mathcal{U}$ by $u,u_1,\ldots,u_{k-2}$ from left to right, and the remaining vertices of $\mathcal{W}$ by $w_1,w_2,\ldots,w_{k-1},w$. 

\begin{definition}
 Define the graph $\swap_v(G)$ to be the graph formed by replacing $\mathcal{U}$ with a bottomless $k+1$-clique, and replacing $\mathcal{W}$ with a $k$-clique.  
\end{definition}

\begin{definition}
    Suppose $\kappa$ is a proper coloring of $G$.  Then we define the ``smaller-colored'' and ``larger-colored'' sets of upper vertices by $$S_{\mathcal{U}}=\{u_i\in \mathcal{U}\mid \kappa(u_i)<\kappa(v)\}\hspace{2cm} L_{\mathcal{U}}=\{u_i\in \mathcal{U}\mid \kappa(u_i)>\kappa(v) \}.$$  Define $S_{\mathcal{W}}$ and $L_{\mathcal{W}}$ similarly.
\end{definition}

\begin{example}
  In the graph in Figure \ref{fig:swap} at left, we have $S_{\mathcal{U}}=\{u_2\}$, $L_{\mathcal{U}}=\{u_1,u_3,u_4\}$, $S_{\mathcal{W}}=\{w_1,w_3,w_5\}$, and $L_{\mathcal{W}}=\{w_2,w_4\}$.  These correspond, respectively, to the color $3$ on the left, the colors $7,9,6$ on the left, the colors $1,3,4$ on the right, and the colors $8,9$ on the right.
\end{example}

\begin{definition}\label{def:special_vertex}
    The \textbf{special vertex} of a triple $(\mathcal{U},\mathcal{W},\kappa)$ is one of the $w_i$ vertices, defined as follows.
    \begin{itemize}
        \item If $|S_{\mathcal{U}}|<|S_{\mathcal{W}}|$, then list the vertices of $S_{\mathcal{U}}\cup S_{\mathcal{W}}$ in decreasing order of vertex color, breaking ties by listing the elements of $S_{\mathcal{U}}$ first.  Associate an open parenthesis to elements of $S_{\mathcal{U}}$, closed parenthesis to elements of $S_{\mathcal{W}}$, and pair them. Let $w_i$ be the leftmost unpaired entry from $S_{\mathcal{W}}$ in this list.
       
        \item If $|L_{\mathcal{U}}| < |L_{\mathcal{W}}|$, then order the vertices of $L_{\mathcal{U}}\cup L_{\mathcal{W}}$ in increasing order of vertex color, breaking ties by listing the elements of $L_{\mathcal{U}}$ first. Associate an open parenthesis to elements of $L_{\mathcal{U}}$, a closed parenthesis to elements of $L_{\mathcal{W}}$, and pair them. Let $w_i$ be the leftmost unpaired entry from $L_{\mathcal{W}}$ in this list. 
    \end{itemize}
\end{definition}

\begin{example}
    In the example above, we have $|S_{\mathcal{U}}|<|S_{\mathcal{W}}|$, so we list the vertices in decreasing order of vertex color.  We list the colors, then the vertex labels, and then the parenthesization in the table below:
    \begin{center}
    \begin{tabular}{cccc}
           4 & 3 & 3 & 1 \\
           $w_5$ & $u_2$ & $w_3$ & $w_1$ \\
           ) & ( & ) & )
    \end{tabular}
    \end{center}
    The leftmost unpaired entry is $w_5$, with color $4$, so this is the special vertex.
\end{example}

\begin{lem}
    The special vertex $w_i$ for the triple $(\mathcal{U},\mathcal{W},\kappa)$ exists and is well-defined. 
\end{lem}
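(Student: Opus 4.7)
The plan is to prove the lemma via two observations: first, that the sizes $|S_\mathcal{U}|$, $|L_\mathcal{U}|$, $|S_\mathcal{W}|$, $|L_\mathcal{W}|$ satisfy a rigid identity forcing exactly one of the two conditions of Definition~\ref{def:special_vertex} to apply; and second, that under the applicable condition, the parenthesization procedure identifies a unique leftmost unpaired entry from $\mathcal{W}$.

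First I would count upper vertices. The clique $\mathcal{U}$ is a $k$-clique whose only lower vertices are $u$ and $v$, so it contains exactly $k-2$ upper vertices $u_1,\ldots,u_{k-2}$. The clique $\mathcal{W}$ is a bottomless $(k+1)$-clique with lower vertices $v$ and $w$, so it contains exactly $k-1$ upper vertices $w_1,\ldots,w_{k-1}$. Since $\kappa$ is a proper coloring, no upper vertex can share a color with $v$, so the upper vertices of $\mathcal{U}$ partition into $S_\mathcal{U} \sqcup L_\mathcal{U}$ and those of $\mathcal{W}$ into $S_\mathcal{W} \sqcup L_\mathcal{W}$, giving
\[|S_\mathcal{U}|+|L_\mathcal{U}|=k-2 \qquad \text{and} \qquad |S_\mathcal{W}|+|L_\mathcal{W}|=k-1.\]
Subtracting yields the off-by-one identity $(|S_\mathcal{W}|-|S_\mathcal{U}|)+(|L_\mathcal{W}|-|L_\mathcal{U}|)=1$. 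A short sign check then shows that exactly one of $|S_\mathcal{U}|<|S_\mathcal{W}|$ and $|L_\mathcal{U}|<|L_\mathcal{W}|$ holds: if the first difference is positive (hence $\geq 1$ as an integer), the second must be $\leq 0$; otherwise the second is $\geq 1$. In either scenario exactly one clause of Definition~\ref{def:special_vertex} is triggered.

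For the second step, I would note that in the triggered clause the corresponding sorted list of parentheses contains strictly more closed parentheses (from $\mathcal{W}$) than open ones (from $\mathcal{U}$). Under the standard non-crossing parenthesis matching — scan left to right, pairing each close with the most recent unpaired open — this strict excess forces at least one close to remain unpaired, so a leftmost unpaired close exists and is unambiguous. The tie-breaking convention (opens before closes at equal colors) makes the underlying list itself well-defined, eliminating any ambiguity in the ordering. Consequently the leftmost unpaired entry is a uniquely determined element of $S_\mathcal{W}$ (in the first clause) or $L_\mathcal{W}$ (in the second), and in either case it is some $w_i$, as required.

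I do not anticipate any real obstacle: the entire lemma rests on the elementary identity $|\mathcal{W}|-|\mathcal{U}|=1$ between the two clique sizes, which rigidly partitions the parameter space into the two cases, together with the basic parenthesis-matching principle that an excess of closing brackets forces an unmatched close. The most delicate point to verify carefully is the tie-breaking rule, but placing opens before closes at equal colors makes the parenthesization procedure deterministic.
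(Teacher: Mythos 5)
Your proof is correct and follows essentially the same route as the paper's: count the $k-2$ versus $k-1$ upper vertices, use properness to get the disjoint decompositions $S\sqcup L$, deduce that exactly one of the two inequalities holds, and conclude that the excess of closed parentheses in the triggered list forces a (unique leftmost) unpaired entry from $\mathcal{W}$. The only difference is cosmetic — you also spell out the tie-breaking point explicitly, which the paper leaves implicit in the definition.
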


\begin{proof}
    Since $\kappa$ is a proper coloring, the sets of colors $\kappa(\mathcal{U}):=\{\kappa(u_1),\ldots, \kappa(u_{k-2})\}$ and $\kappa(\mathcal{W}):=\{\kappa(w_1),\ldots, \kappa(w_{k-1})\}$ contain $k-2$ and $k-1$ elements, respectively. Further, $\mathcal{U} = S_{\mathcal{U}}\cup L_{\mathcal{U}}$ and $\mathcal{W} = S_{\mathcal{W}}\cup L_{\mathcal{W}}$ are unions of disjoint sets. Since $|\mathcal{U}|+1= |\mathcal{W}|$, we must have exactly one of $|S_{\mathcal{U}}|<|S_{\mathcal{W}}|$ or $|L_{\mathcal{U}}| < |L_{\mathcal{W}}|$. In either case, when associating parenthesis to the list of vertices, there are more vertices from $\mathcal{W}$ in the list, so there is some associated unpaired closed parenthesis. Hence the special vertex is well-defined.  
\end{proof}

We now define the $\swap$ map for a coloring $\kappa$ of $G$. 

\begin{definition}
    We define $\swap(\kappa):V(\swap_v(G))\to \mathbb{N}$ as follows. Color the vertices outside of $\mathcal{U}\cup \mathcal{W}$, as well as vertices $u$ and $w$, with the same color as $\kappa$ for $G$. Color the single vertex $v\in \mathcal{U}\cap \mathcal{W}$ with $\kappa(w_i)$, where $w_i$ was the special vertex of $(\mathcal{U},\mathcal{W},\kappa)$. Then color the upper vertices of the new $k$-clique using the colors $\kappa(w_1),\ldots,\kappa(w_{i-1}),\kappa(w_{i+1}),\ldots, \kappa(w_{k-1})$ in the same relative order as $\kappa(u_1),\ldots,\kappa(u_{k-2})$, and the upper vertices of the new bottomless $k+1$-clique using the colors $\kappa(u_1),\ldots, \kappa(u_{k-2}),\kappa(v)$ in the same relative order as $\kappa(w_1),\ldots,\kappa(w_{k-1})$.  
\end{definition}

See Figure \ref{fig:swap} for an example.

\begin{lem} \label{lem:swap_ascents}
    Suppose that $G$ is a $(p,k)$-mixed mountain graph and $\kappa$ is a proper coloring of $G$. Then $\swap(\kappa)$ is a proper coloring, and $\kappa$ and $\swap(\kappa)$ have the same number of ascents.  
\end{lem}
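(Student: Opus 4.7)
The plan is to split the edges of $G$ (equivalently, of $\swap_v(G)$) into those outside $\mathcal{U}\cup\mathcal{W}$, where the coloring is unchanged, and those inside $\mathcal{U}\cup\mathcal{W}$, which I further partition by endpoint type: (a) upper--upper edges within each clique, (b) edges at the left endpoint $u$, (c) edges at the right endpoint $w$, and (d) edges between upper vertices and the shared vertex $v$. Types (a)--(c) will contribute equal ascents before and after, while type (d) is where the special-vertex construction enters.

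For (a), by construction the new upper-color sequence in each of the two cliques is a rearrangement of the old upper sequence from the other clique having the same permutation pattern, so the number of ascents among upper vertices is preserved after summing across the two cliques. For (b), turning $\mathcal{U}$ into a bottomless clique removes the edge $(u,v)$, losing $[\kappa(u)<\kappa(v)]$ ascents, while $v$'s old color $\kappa(v)$ now appears at an upper position adjacent to $u$, gaining $[\kappa(u)<\kappa(v)]$; these cancel. For (c), symmetrically, new $\mathcal{W}$ gains the edge $(v,w)$ contributing $[\kappa(w_i)<\kappa(w)]$, while removing $w_i$ from $w$'s upper neighbors loses $[\kappa(w_i)<\kappa(w)]$.

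Writing $t=\kappa(w_i)$, the old type-(d) total is $|S_\mathcal{U}|+|L_\mathcal{W}|$. Expanding the new total and simplifying reduces the required equality, in case 1 ($|S_\mathcal{U}|<|S_\mathcal{W}|$ so $t<\kappa(v)$), to
\[
|\{u_j\in S_\mathcal{U}:\kappa(u_j)\ge t\}|=|\{w_j\in S_\mathcal{W}:\kappa(w_j)>t\}|,
\]
and symmetrically in case 2 ($t>\kappa(v)$) to the analogous identity on $L_\mathcal{U}$ and $L_\mathcal{W}$. Both identities are exactly the statement ``opens strictly before $w_i$ equal closes strictly before $w_i$'' in the parenthesization of Definition~\ref{def:special_vertex}, which holds because $w_i$ is by construction the leftmost unpaired close. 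The opens-first tiebreak is precisely what makes the strict versus nonstrict inequalities align on the two sides.

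The properness check reuses the same observation. Every potential color collision is immediate from the properness of $\kappa$ except possibly between the new color $t$ at $v$ and a same-side upper vertex $u_j$ of new $\mathcal{U}$ (i.e.\ $u_j\in S_\mathcal{U}$ in case 1, $u_j\in L_\mathcal{U}$ in case 2); opposite-side $u_j$ lie on the wrong side of $\kappa(v)$ from $t$ to coincide. But if such a same-side $u_j$ with $\kappa(u_j)=t$ existed, the opens-first tiebreak would place it immediately before $w_i$ in the parenthesization with a balanced prefix up to that point, so $w_i$ would pair with $u_j$ rather than be left unpaired, contradicting the choice of $w_i$ as leftmost unpaired close. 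Thus $\swap(\kappa)$ is a proper coloring and its ascent total agrees with $\kappa$. The main obstacle is the careful bookkeeping in the type-(d) reduction, particularly the handling of color ties in the parenthesization, which is where the two cases of Definition~\ref{def:special_vertex} must be treated separately but in parallel.
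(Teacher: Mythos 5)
Your proposal is correct and follows essentially the same decomposition as the paper's proof: unchanged edges outside $\mathcal{U}\cup\mathcal{W}$, upper--upper edges matched across the two cliques by color pattern, multiset arguments at $u$ and at $w$, and the special vertex governing the edges at $v$. The only genuine difference is how the step at $v$ is packaged: the paper argues directly through the matched pairs (each matched $u_j$--$w_\ell$ pair contributes exactly one ascent with $v$ before and after, and the unmatched vertices all lie on the small side), while you reduce to the prefix-balance identity at the leftmost unmatched close; these are two faces of the same fact about the parenthesization of Definition~\ref{def:special_vertex}, and your reduction, including the $\ge t$ versus $>t$ asymmetry produced by the opens-first tiebreak, checks out in both cases. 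Your properness discussion is in fact more explicit than the paper's at one point: the only collision not inherited from properness of $\kappa$ is between the new color $t=\kappa(w_i)$ at $v$ and an upper vertex of the new bottomless clique carrying some $\kappa(u_j)$, since $u_j$ and $w_i$ are not adjacent in $G$, and your argument that such a $u_j$ would sit immediately before $w_i$ in the list and force $w_i$ to be matched is exactly the right way to exclude it (it also makes the strict inequalities in your second case legitimate).
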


\begin{proof}
    Let $\kappa$ be a proper coloring of $G$, and let $\swap(\kappa)$ be the associated coloring of $\swap_v(G)$. 
    
    First, we show that $\swap(\kappa)$ is a proper coloring. Let $x$ and $x'$ be adjacent vertices in $\swap_v(G)$. If neither of $x$ and $x'$ are in either of the swapped cliques $\mathcal{U}$ and $\mathcal{W}$, then they are assigned the same colors in $\kappa$ and $\swap(\kappa)$, so $\swap(\kappa)(x) \neq \swap(\kappa)(x')$. If $x$ and $x'$ are both upper vertices in the new $k$-clique or bottomless $k+1$-clique, then $\swap(\kappa)$ assigns them distinct colors from $\kappa(\mathcal{W}\setminus\{w_i\})$ or $\kappa(\mathcal{U}\cup \{v\})$, respectively.  If $x$ is an upper vertex and $x'$ is not, then $x'$ is one of the neighboring lower vertices of the new $k$-clique or bottomless $k+1$-clique. In either case, the colors of the vertices neighboring $x$ in $\swap_v(G)$ are a subset of the colors of the vertices neighboring the vertex with color $\swap(\kappa)(x)$ in $G$. If $x$ and $x'$ are both lower vertices, then they must be the lower vertices of the new $k$-clique. In this case, $\swap(\kappa)$ assigns $x$ and $x'$ colors from two adjacent vertices in $\mathcal{W}$. 
    Hence $x$ and $x'$ are assigned different colors in $\swap(\kappa)$. 

    Next, we show that $\kappa$ and $\swap(\kappa)$ have the same number of ascents. First, if $e$ is an edge between any two vertices not in $\mathcal{U}$ or $\mathcal{W}$, then $\swap(\kappa)$ assigns the same color to these vertices as $\kappa$, so it has an ascent exactly if $\kappa$ has an ascent. If $e$ is an edge between the upper vertices of the new $k$-clique, then these vertices were colored so as to maintain the relative order of colors of upper vertices of $\mathcal{U}$, so the edge $e$ has an ascent if and only if the corresponding edge in the original $k$-clique $\mathcal{U}$ has an ascent. Likewise, if $e$ is an edge between the upper vertices of the new bottomless $k+1$-clique, then since the ordering of the colors of the upper vertices is maintained, $e$ has an ascent if and only if the corresponding edge in $\mathcal{W}$ has an ascent in $\kappa$. 

    Consider the set of edges between the left vertex $u$ and upper vertices of the new bottomless $k+1$-clique. Since these upper vertices were assigned colors from $\kappa(\mathcal{U})$, which were the set of colors adjacent to $u$ in $G$, these edges contain the same number of ascents as in the coloring $\kappa$. By the same argument, the edges between the upper vertices of the new $k$-clique and $w$ contain the same number of ascents in $\kappa$ and $\swap(\kappa)$.  

    Finally, consider the set of edges between the middle vertex $v$ and an upper vertex in either clique.   In $\kappa$, the number of ascents formed by these edges is $|S_{\mathcal{U}}|+|L_{\mathcal{W}}|$. If $|S_{\mathcal{U}}|<|S_{\mathcal{W}}|$, then consider the pairing defined in Definition~\ref{def:special_vertex}, and let $w_i$ be the special vertex with color $c=\kappa(w_i)$. Let $a=\kappa(v)$ be the color of $v$.  Then $c=\swap(\kappa)(v)$, and since $c$ was an unpaired color, each set of paired vertices are either both colored less or both colored greater than $c$. In particular, if $u_j$ is paired with $w_k$, then the edges $u_j$ --- $v$ --- $w_k$ contain exactly one ascent in both $\kappa$ and $\swap(\kappa)$. By the definition of $w_i$, all unpaired vertices from $S_{\mathcal{U}}$ and $S_{\mathcal{W}}$ have color at most $c$, so the number of ascents in $\kappa$ between these unpaired vertices and $v$ is the same as the number of ascents in $\swap(\kappa)$ between these vertices.  (Note that the edge between $v$ and $w_i$ corresponds to an edge between the new vertex labeled $a$ in the $k$-clique and $v$, which will still be an ascent in $\swap(\kappa)$ if and only if $v$ --- $w_i$ formed an ascent in $\kappa$.)
    
    Since entries from $L_{\mathcal{U}}$ and $L_{\mathcal{W}}$ are given color greater than $\kappa(v)$ and $\swap(\kappa)(v)$, we get that the number of ascents formed by edges between $\mathcal{U}$, $\mathcal{W}$, and $v$ in $\swap(\kappa)$ is also $|S_{\mathcal{U}}|+|L_{\mathcal{W}}|$. 
    
    An analogous argument shows that ascents are preserved in the case $|L_{\mathcal{U}}|<|L_{\mathcal{W}}|$.  Therefore we have $\asc(\kappa)=\asc(\swap(\kappa))$. 
\end{proof}

\begin{lem}\label{lem:swap_bijection}
    The map $\kappa\mapsto \swap(\kappa)$ is a bijection between colorings of $G$ and $\swap_v(G)$. 
\end{lem}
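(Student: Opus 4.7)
The plan is to construct an explicit inverse map for $\swap$. Since $\swap_v(G)$ has its bottomless $k+1$-clique $\mathcal{U}'$ on the left of $v$ and its $k$-clique $\mathcal{W}'$ on the right, I would define an analogous map $\swap^{-1}$ on proper colorings of $\swap_v(G)$ by mirroring the template of the $\swap$ construction, with the roles of the $k$-clique and the bottomless $k+1$-clique exchanged throughout. Specifically, given a coloring $\kappa'$ of $\swap_v(G)$, define $S_{\mathcal{U}'}, L_{\mathcal{U}'}, S_{\mathcal{W}'}, L_{\mathcal{W}'}$ as in Definition~\ref{def:special_vertex} with respect to $\kappa'(v)$; since $\mathcal{U}'$ has one more upper vertex than $\mathcal{W}'$, the strict inequality now falls on the $\mathcal{U}'$ side, and the relevant parenthesization lists pair elements of $\mathcal{W}'$ (opens) against elements of $\mathcal{U}'$ (closes), singling out the leftmost unpaired close from $\mathcal{U}'$ as the special vertex. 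The map $\swap^{-1}$ then recolors $v$ with the color at this special vertex and reshuffles the upper vertices of $\mathcal{U}'$ and $\mathcal{W}'$ back into a $k$-clique on the left and bottomless $k+1$-clique on the right, preserving all other colors.

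The central claim is that when $\kappa' = \swap(\kappa)$, the special vertex identified by this reverse procedure is precisely the upper vertex of $\mathcal{U}'$ carrying the color $a := \kappa(v)$. To check this, suppose $|S_{\mathcal{U}}| < |S_{\mathcal{W}}|$ holds for $(\mathcal{U}, \mathcal{W}, \kappa)$, so that the original special vertex $w_i$ has color $c := \kappa(w_i) < a$. Under $\swap$, the multiset of upper vertex colors on the two cliques is altered only by interchanging $a$ and $c$ between $v$ and the position that was $w_i$, so the decreasing-color listing underlying the reverse parenthesization is obtained from the original listing by the natural recolor-and-relabel operation. A case check depending on which other upper vertex colors lie in the interval $[c, a)$ versus outside it then shows that the leftmost unpaired close in the reverse listing corresponds exactly to the entry of color $a$ in $\mathcal{U}'$. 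The case $|L_{\mathcal{U}}| < |L_{\mathcal{W}}|$ is handled symmetrically with the $L$-sets.

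Given this identification, applying $\swap^{-1}$ to $\swap(\kappa)$ returns the color $a$ to $v$ and the color $c$ to the position of $w_i$; because $\swap$ preserved the relative orderings of upper vertex colors within each clique, the remaining upper vertices are reassembled into their original positions in $\mathcal{U}$ and $\mathcal{W}$, yielding $\swap^{-1}(\swap(\kappa)) = \kappa$. The symmetric argument with the roles of $G$ and $\swap_v(G)$ swapped gives $\swap(\swap^{-1}(\kappa')) = \kappa'$, establishing the bijection. The main obstacle is the parenthesization case analysis in the middle paragraph, which requires careful bookkeeping of how the interchange of $a$ and $c$ at positions $v$ and $w_i$ affects the pairing structure; the key mechanism to isolate is that the ``extra open parenthesis'' marking the special vertex in the forward direction translates into the matching ``extra close parenthesis'' in the reverse direction, located at precisely the color $a$.
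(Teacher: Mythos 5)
Your route is genuinely different from the paper's. The paper never constructs an inverse: it proves only that $\swap$ is injective (by contradiction, taking two colorings $\kappa,\rho$ with $\swap(\kappa)=\swap(\rho)$, reducing to $\kappa(v)\neq\rho(v)$, and using a subword argument on the two parenthesization sequences to show the images must then disagree at $v$), and then upgrades injectivity to bijectivity globally: composing swaps gives an injection from colorings of the mixed mountain graph with all $k$-cliques on the left to those of its mirror image, these two graphs have the same number of proper colorings of each content, so the composite is a bijection and hence so is each intermediate swap. Your plan instead builds an explicit two-sided inverse via a mirrored special-vertex rule; this is more constructive and avoids the counting/composition step, and I believe the key claim (that the mirrored rule applied to $\swap(\kappa)$ singles out exactly the upper vertex of the new bottomless clique carrying the color $a=\kappa(v)$) is in fact true.

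However, that claim is where essentially all of the work lives, and your proposal only gestures at it (``a case check \ldots then shows''). Two things need to be proved, not just asserted. First, you must show the reverse procedure lands in the complementary case: if the forward map used $|S_{\mathcal{U}}|<|S_{\mathcal{W}}|$ with special color $c<a$, you need $|S_{\mathcal{W}'}|\geq|S_{\mathcal{U}'}|$ so that the reverse rule runs on the $L$-sets where the vertex colored $a$ actually sits; this follows from the fact that $w_i$ being the \emph{leftmost} unpaired close forces the numbers of $\mathcal{U}$-colors in $[c,a)$ and $\mathcal{W}$-colors in $(c,a)$ to be equal (and, via the tie-breaking rule, forces $c\notin\kappa(\mathcal{U})$), but none of this is in your sketch. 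Second, you must transfer the prefix-dominance conditions: the forward matching gives, for every $s\in(c,a)$, that $\kappa(\mathcal{W})$ has at most as many colors in $[s,a)$ as $\kappa(\mathcal{U})$ does, and combined with the equality of totals on $(c,a)$ this yields the reverse dominance $|\kappa(\mathcal{U})\cap(c,t]|\leq|\kappa(\mathcal{W})\cap(c,t]|$ for all $t\in(c,a)$, which is exactly what makes every close below color $a$ matched and the close at color $a$ the leftmost unpaired one in the increasing listing (the tie-breaking conventions on both sides must be checked to be compatible here). Until these verifications are written out, your argument is a plausible plan rather than a proof; with them, it would stand as a legitimate alternative to the paper's injectivity-plus-counting argument.
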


\begin{proof}
    We first claim that it suffices to show that the map is injective.  If it is, we can compose swap maps to form an injective map from colorings of the mixed mountain graph $G_0$ having exactly $N$ $k$-cliques followed by $M$ bottomless $k+1$-cliques, to the graph $G_1$ having $M$ bottomless $k+1$-cliques followed by $N$ $k$-cliques, which by symmetry have the same total number of proper colorings.  Thus this composed map is a bijection, so every intermediate swap map is a bijection as well (and every mixed mountain graph can be obtained via some sequence of swaps starting from $G_0$).
    
    Suppose for contradiction that there exist colorings $\kappa$ and $\rho$ of $G$ which both map to the coloring $\swap(\kappa)=\swap(\rho)$ on $\swap_v(G)$. For simplicity, we set $\kappa':= \swap(\kappa)$ and $\rho' = \swap(\rho)$. For a complete subgraph $H\subset G$, let $\kappa(H)$ be the set of colors on vertices of $H$. By construction of $\kappa'$, since $\kappa'=\rho'$, we must have $\kappa(\mathcal{U}) = \rho(\mathcal{U})$ and $\kappa(\mathcal{W}\setminus\{v\}) = \rho(\mathcal{W}\setminus \{v\})$. If $\kappa(v) = \rho(v)$, then since $\swap$ preserves the relative orders of the upper vertices, we must have that $\kappa=\rho$. 
    
    Now assume that $\kappa(v) \neq \rho(v)$, and without loss of generality suppose that $\kappa(v)<\rho(v)$. We will show that $\kappa'(v) \neq \rho'(v)$. Let $S_{\mathcal{U}}^{\kappa}$ be the set of vertices of $\mathcal{U}$ colored smaller than $\kappa(v)$ and $S_{\mathcal{U}}^{\rho}$ be the set of vertices of $\mathcal{U}$ colored smaller than $\rho(v)$. Similarly define $L_{\mathcal{U}}^{\kappa}, L_{\mathcal{U}}^{\rho}$, $S_{\mathcal{W}}^{\kappa}$, $S_{\mathcal{W}}^{\rho}$, $L_{\mathcal{W}}^{\kappa}$, and $L_{\mathcal{W}}^{\rho}$. We have several cases:

    Suppose that $|S_{\mathcal{U}}^{\kappa}| < |S_{\mathcal{W}}^{\kappa}|$ and $|L_{\mathcal{U}}^{\rho}|<|L_{\mathcal{W}}^{\rho}|$. Then in $\rho'$, the color for $v$ is chosen from the colors of $L_{\mathcal{W}}^{\rho}$, and in $\kappa'$, the color for $v$ is chosen from $S_{\mathcal{U}}^{\kappa}$. So we have $\rho'(v) > \rho(v) > \kappa(v) > \kappa'(v)$. 

    Suppose that $|S_{\mathcal{U}}^{\kappa}|<|S_{\mathcal{W}}^{\kappa}|$ and $|S_{\mathcal{U}}^{\rho}| < |S_{\mathcal{W}}^{\rho}|$.  Let $s^{\kappa}$ be the sequence consisting of vertices of $S_{\mathcal{U}}^{\kappa}\cup S_{\mathcal{W}}^{\kappa}$, ordered in decreasing order of vertex color, breaking ties by listing elements of $S_{\mathcal{U}}^{\kappa}$ first. Let $s^{\rho}$ be the similarly defined word on $S_{\mathcal{U}}^{\rho}\cup S_{\mathcal{W}}^{\rho}$.  
    
    Since $\kappa(\mathcal{W}\setminus\{v\}) = \rho(\mathcal{W}\setminus\{v\})$ and the relative order for the colors of upper vertices in $\mathcal{W}$ must match the relative order of colors in $\kappa'$ for the upper vertices of the $k+1$-bottomless clique , we have that $\kappa$ and $\rho$ color the upper vertices of $\mathcal{W}$ exactly the same. So, since $\kappa(v)<\rho(v)$, we have $S_{\mathcal{W}}^{\kappa}\subseteq S_{\mathcal{W}}^{\rho}$. Next, because $\kappa(\mathcal{U}) = \rho(\mathcal{U})$, and the order of upper vertices of $\mathcal{U}$ is the same in $\kappa$ and $\rho$, we have that $S_{\mathcal{U}}^{\kappa} \subseteq S_{\mathcal{U}}^{\rho}$. Further, $\kappa$ and $\rho$ only differ in the positions of upper vertices $u_j$ with $\kappa(v)\leq\kappa(u_j)\leq\rho(v)$. Therefore $s^{\kappa}$ is a subword of $s^{\rho}$, and in fact since they are both decreasing we have $s^\rho=\tilde{s}s^{\kappa}$ for some word $\tilde{s}$.
    
    Additionally, the vertex $x\in \mathcal{U}$ with color $\rho(x) = \kappa(v)$ is the first character to the left of $s^{\kappa}$ in $s^{\rho}$, since it is colored smallest among $S_{\mathcal{U}}^{\rho}\setminus S_{\mathcal{U}}^{\kappa}$ and there can not be any colors tied with $\kappa(v)$. Since $x$ is associated to an open parenthesis in $s^{\rho}$, it pairs with the leftmost unpaired closed parenthesis in $s^{\kappa}$. Therefore, the leftmost unpaired closed parenthesis in $s^{\kappa}$ is different than in $s^{\rho}$, so $\kappa'(v)\neq \rho'(v)$, contradicting our assumption. 
    
    The two cases when $|L_{\mathcal{U}}^{\kappa}|<|L_{\mathcal{W}}^{\kappa}|$ follow similarly. Therefore, the map sending $(G,\kappa)$ to $(\swap_v(G),\swap(\kappa))$ is injective. 
\end{proof}

Since $\swap$ is a bijection, we can equivalently say that $\swap^{-1}$ is an ascent-preserving map on colorings for graphs where the swapped $k$-clique is to the right of the bottomless $k+1$-clique. Because of Lemmas ~\ref{lem:swap_ascents} and ~\ref{lem:swap_bijection}, we conclude the following. 

\begin{thm} \label{thm:equalmixed}
    If $G$ is a $(p,k)$-mixed mountain graph, and $\swap_v(G)$ is the $(p,k)$-mixed mountain graph obtained by swapping an adjacent $k$-clique and bottomless $k+1$-clique, then $X_{G}(x;q) = X_{\swap_v(G)}(x;q)$. 
\end{thm}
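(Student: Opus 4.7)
The plan is to read Theorem \ref{thm:equalmixed} as an immediate corollary of the two preceding lemmas together with a small bookkeeping verification about monomials. Starting from the definition
$$X_G(x;q) = \sum_{\kappa} q^{\asc(\kappa)} x_{\kappa(1)} \cdots x_{\kappa(n)}$$
where the sum ranges over proper colorings $\kappa$ of $G$, I would reindex this sum using the map $\kappa \mapsto \swap(\kappa)$, which by Lemma \ref{lem:swap_bijection} is a bijection onto the proper colorings of $\swap_v(G)$. Thus every proper coloring of $\swap_v(G)$ appears exactly once as the image, so the two CQFs are sums over index sets in natural bijection.

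Next I would use Lemma \ref{lem:swap_ascents}, which says $\asc(\kappa) = \asc(\swap(\kappa))$, to replace the weight $q^{\asc(\kappa)}$ term by term by $q^{\asc(\swap(\kappa))}$. The only remaining step is to verify that the monomial factor is unchanged under $\swap$, i.e., that $\kappa$ and $\swap(\kappa)$ use each color with the same total multiplicity. This amounts to checking that the multiset of colors appearing on $\mathcal{U} \cup \mathcal{W}$ in $\kappa$ equals the multiset of colors appearing on the corresponding vertex set in $\swap(\kappa)$, since all other vertices keep their colors by definition.

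For that verification I would read off the definition of $\swap$ directly. The vertices $u$ and $w$ keep their colors. Inside $\mathcal{U} \cup \mathcal{W}$, the remaining colors in $\kappa$ form the multiset $\{\kappa(u_1),\ldots,\kappa(u_{k-2})\} \cup \{\kappa(v)\} \cup \{\kappa(w_1),\ldots,\kappa(w_{k-1})\}$, and by the construction this same multiset is redistributed in $\swap(\kappa)$ as $\{\kappa(w_j) : j \neq i\}$ on the upper vertices of the new $k$-clique, the color $\kappa(w_i)$ on the central vertex $v$, and $\{\kappa(u_1),\ldots,\kappa(u_{k-2}),\kappa(v)\}$ on the upper vertices of the new bottomless $k+1$-clique. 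The union of these three sets is manifestly the original multiset, so the monomial is preserved.

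Combining these three observations gives the equality $X_G(x;q) = X_{\swap_v(G)}(x;q)$. I do not anticipate a genuine obstacle here, since Lemmas \ref{lem:swap_ascents} and \ref{lem:swap_bijection} already carry the substantive content; the remaining multiset check is the only addition, and it is immediate from the construction of $\swap$. The utility of this theorem lies downstream: since any mixed mountain graph can be transformed by a sequence of such adjacent swaps into one whose cliques are grouped as either all $k$-cliques or all bottomless $k+1$-cliques, repeated application of Theorem \ref{thm:equalmixed} together with the symmetry results for ordinary and bottomless mountain graphs will yield Theorem \ref{thm:mountains}.
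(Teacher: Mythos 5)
Your proposal is correct and matches the paper's argument: the paper deduces Theorem \ref{thm:equalmixed} directly from Lemmas \ref{lem:swap_ascents} and \ref{lem:swap_bijection}, exactly as you do, with the content-preservation (multiset of colors) check left implicit since $\swap(\kappa)$ merely redistributes the same colors on $\mathcal{U}\cup\mathcal{W}$. Your explicit verification of that multiset bookkeeping is a harmless elaboration of the same proof.
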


\begin{remark}
    Note that the proof of the above theorem does not rely on the presence of the bottom edge in the mixed-mountain graph. Thus, this also implies a result on natural unit interval graphs. Let $G$ be a natural unit interval graph obtained by deleting the bottom edge from a $(p,k)$-mixed mountain graph, and let $H$ be the natural unit interval graph obtained by deleting the bottom edge from $\swap_v(G)$. Then $X_{G}(x;q) = X_{H}(x;q)$.
\end{remark}

As a consequence of Theorem ~\ref{thm:equalmixed}, it suffices to show the chromatic quasisymmetric functions of $(p,k)$-mixed mountain graphs where every $k$-clique is to the left of a bottomless $(k+1)$-clique are symmetric. We denote by $M_{p,k,m}$ the $(p,k)$-mixed mountain graph with $m$ $k$-cliques followed by $p-m$ bottomless $(k+1)$-cliques. Let $L_{a,a+1}(M_{p,k,m})$ denote the set of colorings whose $(a,a+1)$-colored subgraph includes the bottom edge, and $K_{a,a+1}(M_{p,k,m})$ denote the set of proper colorings of $M_{p,k,m}$ such that the $(a,a+1)$-colored subgraph does not include the bottom edge. As before, we demonstrate ascent-preserving automorphisms on the sets $L_{a,a+1}(M_{p,k,m})$ and $K_{a,a+1}(M_{p,k,m})$ which swap the number of occurrences of the colors $a$ and $a+1$.

Removing the bottom edge of $M_{p,k,m}$ results in a unit interval graph. Thus, the proof of Proposition ~\ref{prop:K} immediately gives an ascent-preserving involution on $K_{a,a+1}(M_{p,k,m})$. In order to find an ascent-preserving automorphism on $L_{a,a+1}(M_{p,k,m})$, we make use of the $\swap$, $\cycle$ and $\reflect$ maps. We first show that $\cycle$ and $\reflect$ are still ascent-preserving when generalized to the set $L_{a,a+1}(M_{p,k,m})$.

 \begin{lem} \label{lem:cyclemixed}
    Suppose $\kappa \in L_{a,a+1}(M_{p,k,m})$ with $a>1$. Then $\asc(\cycle(\kappa))=\asc(\kappa)$.
\end{lem}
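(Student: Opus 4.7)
The proof runs parallel to Lemma \ref{lem:cycle}: recall that $\cycle$ consists of recoloring every vertex colored $1$ to the new largest color $c+1$, then reordering uppers within each clique in which this happened, then subtracting $1$ from every color. The uniform $-1$ shift preserves the relative order of colors on any edge both of whose endpoints were originally $\geq 2$, so the analysis reduces to tracking edges incident to a vertex originally colored $1$. Since $a > 1$ and $\kappa \in L_{a,a+1}(M_{p,k,m})$, the endpoints $v_1, v_n$ of the bottom edge are colored $a$ or $a+1$, hence never recolored by cycle, so the bottom edge's ascent status is automatically preserved. The plan is therefore to prove ascent preservation clique-by-clique, exactly mirroring the proof of Lemma \ref{lem:cycle} but with two clique types.

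For a clique $C$ containing no vertex colored $1$, only the $-1$ shift applies. For a clique $C$ containing an upper vertex $v$ colored $1$ (necessarily unique, since any two uppers are adjacent in both clique types), I would argue as in Lemma \ref{lem:cycle} after the key observation that an upper vertex is adjacent to every other vertex of its clique in both settings—the only missing edge in a bottomless $(k+1)$-clique lies between its two \emph{lower} vertices. Hence if $v$ is the $i$-th upper from the left, the reordering moves $v$ to the $i$-th position from the right, which swaps the counts of $v$'s clique-neighbors to its left and to its right; combined with changing $v$'s color from the minimum to the maximum, this preserves the incident ascent count, while ascents not involving $v$ are untouched since the relative order of the remaining uppers is preserved.

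The main new case is a clique containing a lower vertex colored $1$, the subtle sub-case being that \emph{both} lowers of a bottomless $(k+1)$-clique are colored $1$ (permitted because they are non-adjacent in that clique). The crucial structural fact I would highlight upfront is that in either clique type a lower vertex of the clique has exactly $k-1$ clique-mates inside it: the other lower plus $k-2$ uppers in a $k$-clique, or all $k-1$ uppers in a bottomless $(k+1)$-clique. This symmetry lets the ``left clique / right clique'' balance from Lemma \ref{lem:cycle} extend verbatim, since a shared lower $v$ still has $k-1$ larger-labeled neighbors (all in the clique to its right) and $k-1$ smaller-labeled ones (all in the clique to its left), and recoloring $v$ from $1$ to $c+1$ swaps their ascent contributions. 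The main obstacle is the ``both lowers colored $1$'' sub-case; I would handle it by noting that the $k-1$ edges of the bottomless clique incident to $v$ and the $k-1$ incident to $v'$ are disjoint edge sets (no $v$--$v'$ edge exists), so the two local $k-1 \to 0$ and $0 \to k-1$ swaps occur independently and add correctly. Combining these cases with the bottom-edge remark gives $\asc(\cycle(\kappa)) = \asc(\kappa)$.
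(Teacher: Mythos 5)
Your proof is correct and takes essentially the same route as the paper: the paper's proof reduces (via Lemma~\ref{lem:cycle} and Section~\ref{sec:bottomless}) to the shared lower vertex of the adjacent $k$-clique and bottomless $(k+1)$-clique being colored $1$, and counts $k-1$ ascents to its right before and $k-1$ to its left after recoloring, which is exactly your key observation that a lower vertex has $k-1$ neighbors in each neighboring clique regardless of clique type. Your explicit handling of the sub-case where both (non-adjacent) lower vertices of a bottomless clique are colored $1$ is a correct detail that the paper leaves implicit in its ``clearly holds'' remark about $B_{p,k}$ in Section~\ref{sec:bottomless}.
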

\begin{proof}
    By the proof of Lemma ~\ref{lem:cycle} and the discussion in Section~\ref{sec:bottomless}, it remains to consider the case where the lower vertex $v$ shared between the adjacent $k$-clique and bottomless $(k+1)$-clique is colored $1$. This vertex will have exactly $k-1$ ascents in $\kappa$ coming from edges to upper vertices of the bottomless $(k+1)$-clique. In $\cycle(\kappa)$, this vertex will also have exactly $k-1$ ascents coming from all its edges with vertices in the $k$-clique. Thus, $\asc(\cycle(\kappa))=\asc(\kappa)$.
\end{proof}

Unlike for mountain and bottomless mountain graphs, the $\reflect$ map on mixed mountain graphs is not an automorphism on the set $L_{1,2}(M_{p,k,m})$ but rather a map from $L_{1,2}(M_{p,k,m})$ to $L_{1,2}(M^{\rev}_{p,k,m})$. Nevertheless, this map still bijective and preserves ascents.

\begin{lem} \label{lem:reflectmixed}
Let $\kappa \in L_{1,2}(M_{p,k,m})$. Then $\asc(\reflect(\kappa))=\asc(\kappa)$.    
\end{lem}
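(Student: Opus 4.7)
The plan is to adapt the proofs of Lemma \ref{lem:reflect} (for $k$-cliques) and Lemma \ref{lem:bottomlessreflect} (for bottomless $(k+1)$-cliques) and combine them to handle a mixed mountain graph. Both previous proofs partition the edges of $G$ by the colors of their endpoints and then analyze the ascent contribution of each maximal clique separately, so the natural approach is to do the same clique-by-clique analysis here, applying the $k$-clique analysis to the $k$-clique pieces of $G$ and the bottomless analysis to the bottomless $(k+1)$-clique pieces.

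First, I would dispense with the easy edge classes: edges with both endpoints colored in $\{3,\ldots,c\}$, edges with both endpoints colored in $\{1,2\}$, and the bottom edge. As in Lemma \ref{lem:reflect}, the color reversal on $\{3,\ldots,c\}$ together with horizontal reflection preserves the ascent status on each edge with both endpoints in $\{3,\ldots,c\}$; the swap of $1$ and $2$ together with horizontal reflection does the same for edges with both endpoints in $\{1,2\}$ (including the bottom edge). None of this depends on clique size, so the arguments carry over without change.

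The main work is on mixed-color edges (exactly one endpoint in $\{1,2\}$, the other in $\{3,\ldots,c\}$), which I would analyze clique-by-clique by splitting on the colors of the two lower vertices of the clique. When both lower vertices lie in $\{3,\ldots,c\}$ or both lie in $\{1,2\}$, the per-clique mixed-color ascent count is preserved by the same argument already given in Lemma \ref{lem:reflect} (if the clique is a $k$-clique) or Lemma \ref{lem:bottomlessreflect} (if the clique is a bottomless $(k+1)$-clique), since in both lemmas these arguments use only the local structure of a single clique. When exactly one lower vertex is colored in $\{1,2\}$, I reuse the left-colored / right-colored pairing of Lemma \ref{lem:reflect}: the fact that $v_1$ and $v_n$ are colored in $\{1,2\}$ forces such cliques to occur in balanced pairs, and any clique lying strictly between a paired $(C,D)$ has both lower vertices colored in $\{3,\ldots,c\}$ and is thus already handled.

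The main obstacle is that in a paired pair $(C,D)$, the cliques $C$ and $D$ may now have different types (one a $k$-clique, the other a bottomless $(k+1)$-clique), and furthermore the image pair $(\widetilde C,\widetilde D)$ sits in $M^{\rev}_{p,k,m}$, which reverses the left-right ordering of types. I would verify the four sub-cases by direct per-clique counts: the per-clique mixed-color ascent contributions for a left-colored $k$-clique and a left-colored bottomless $(k+1)$-clique in $\kappa$ versus in $\reflect(\kappa)$ are computed exactly as in Lemmas \ref{lem:reflect} and \ref{lem:bottomlessreflect} respectively, and in every sub-case the contribution of $C$ plus $D$ in $\kappa$ equals the contribution of $\widetilde C$ plus $\widetilde D$ in $\reflect(\kappa)$ because each left-colored clique's contribution cancels against the corresponding right-colored image (and vice versa). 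Summing over all edge classes, all unpaired cliques, and all pairs then gives $\asc(\reflect(\kappa)) = \asc(\kappa)$.
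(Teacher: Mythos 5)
Your plan is correct and follows essentially the same route as the paper: reduce to mixed-color edges, reuse the left-colored/right-colored pairing, and check by per-clique counting that paired contributions balance, with the image pair living in $M^{\rev}_{p,k,m}$. The only detail you defer to ``direct per-clique counts'' is exactly what the paper makes explicit, namely that a left-colored clique of either type (ordinary $k$-clique or bottomless $(k+1)$-clique) contributes $k-1$ such ascents while a right-colored clique of either type contributes none, which is why each pair totals $k-1$ before and after $\reflect$.
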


\begin{proof}
    Let $c$ denote the largest color in $\kappa$. By the proofs of Lemma ~\ref{lem:reflect} and ~\ref{lem:bottomlessreflect}, it remains to consider the case when a ``left-colored" $k$-clique $C$ is paired with a ``right-colored" bottomless $(k+1)$-clique $D$. Let $\tilde{C}$ and $\tilde{D}$ be their respective images in the coloring $\reflect(\kappa)$. Given that the $\reflect$ map preserves ascents on edges whose vertices are colored $1$ and $2$ or whose vertices are colored with the numbers from the palette $\{3, \ldots, c\}$, it suffices to consider edges where exactly one vertex is colored either $1$ or $2$. In addition, the relative position of any upper vertex colored $1$ or $2$ within $C$ or $D$ will remain the same in $\tilde{C}$ and $\tilde{D}$ respectively. This implies that the number of ascents for edges between upper vertices where exactly one vertex is colored $1$ or $2$ is the same for $C$ and $\tilde{C}$ and the same for $D$ and $\tilde{D}$.

    We now examine edges in $C$, $D$, $\tilde{C}$, and $\tilde{D}$ where exactly one vertex is colored $1$ or $2$ and where at least one vertex is a lower vertex. As $C$ is left-colored, its left lower vertex is colored $1$ or $2$ and there is an ascent along its edge to the right lower vertex. Moreover, by the same argument as in Lemma ~\ref{lem:reflect}, there are $k-2$ ascents for edges between upper and lower vertices in $C$ where exactly one vertex is colored $1$ or $2$. This implies that $C$ has a total of $k-1$ ascents for edges where exactly one vertex is colored $1$ or $2$ and where at least one vertex is a lower vertex. The bottomless $(k+1)$-clique $D$ and the $k$-clique $\tilde{C}$ have no such ascents as they are both right-colored. If an upper vertex of $\tilde{D}$ is colored $1$ or $2$, there are $k-2$ ascents from its left lower vertex to upper vertices and one ascent from the upper vertex colored $1$ or $2$ to the right lower vertex. If there are no upper vertices of $\tilde{D}$ colored $1$ or $2$, there are $k-1$ ascents from its left lower vertex to the upper vertices. In all cases $\tilde{D}$ has $k-1$ ascents for edges between upper and lower vertices where exactly one vertex is colored $1$ or $2$. As $\tilde{D}$ does not have an edge between its lower vertices, this is also the number of ascents of $\tilde{D}$ for edges where exactly one vertex is colored $1$ or $2$ and where at least one vertex is a lower vertex. Therefore, $C$ and $D$ have a total of $k-1$ ascents for such edges, and so do their images $\tilde{C}$ and $\tilde{D}$. Thus, $\asc(\reflect(\kappa))=\asc(\kappa)$.
\end{proof}

\begin{prop} \label{prop:Lmixed}
    There is an ascent-preserving automorphism on $L_{a,a+1}(M_{p,k,m})$ which swaps the number of occurrences of the colors $a$ and $a+1$.
\end{prop}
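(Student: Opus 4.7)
The plan is to follow the strategy of the proof of Proposition~\ref{prop:lake}, with one additional ingredient to handle the fact that, by Lemma~\ref{lem:reflectmixed}, the $\reflect$ map now sends $L_{1,2}(M_{p,k,m})$ into $L_{1,2}(M^{\rev}_{p,k,m})$ rather than back into $L_{1,2}(M_{p,k,m})$. The missing piece is supplied by Theorem~\ref{thm:equalmixed}: a composition of $m(p-m)$ adjacent $\swap$ moves gives an ascent-preserving bijection from colorings of $M^{\rev}_{p,k,m}$ to colorings of $M_{p,k,m}$ of the same color content, so we can return to the original underlying graph after reflecting.

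More precisely, given $\kappa \in L_{a,a+1}(M_{p,k,m})$ with color content $(c_1,\ldots,c_n)$, I would first apply $\cycle^{a-1}$ (ascent-preserving by Lemma~\ref{lem:cyclemixed}) to land in $L_{1,2}(M_{p,k,m})$ with content $(c_a,c_{a+1},\ldots,c_n,c_1,\ldots,c_{a-1})$; then apply $\reflect$ via Lemma~\ref{lem:reflectmixed} to obtain a coloring in $L_{1,2}(M^{\rev}_{p,k,m})$ with content $(c_{a+1},c_a,c_{a-1},\ldots,c_1,c_n,\ldots,c_{a+2})$; and then apply the $\swap$ sequence from Theorem~\ref{thm:equalmixed} to return to a coloring in $L_{1,2}(M_{p,k,m})$ of the same content. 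Next I would apply the $K_{i,i+1}$-involutions (for $i \geq 3$) provided by the $M_{p,k,m}$-analog of Proposition~\ref{prop:K}, in the bubble-sort order $(\sigma_3)(\sigma_4\sigma_3)\cdots(\sigma_{n-1}\cdots\sigma_3)$, to reverse positions $3$ through $n$ of the content tuple, producing content $(c_{a+1},c_a,c_{a+2},\ldots,c_n,c_1,\ldots,c_{a-1})$. Finally I would apply $\cycle^{-1}$ a total of $a-1$ times to obtain a coloring in $L_{a,a+1}(M_{p,k,m})$ with content $(c_1,\ldots,c_{a-1},c_{a+1},c_a,c_{a+2},\ldots,c_n)$, which swaps the counts of colors $a$ and $a+1$ as required.

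The main obstacle is confirming that the coloring remains in $L_{1,2}$ throughout the $\swap$ step. This reduces to verifying that no $\swap_v$ alters the colors on the bottom-edge endpoints $v_1$ and $v_n$: when a swap is applied to the first or last clique pair, the vertices $v_1$ and $v_n$ play the roles of the ``outer'' vertices $u$ and $w$, whose colors the swap definition preserves by construction, and otherwise $v_1,v_n$ lie entirely outside the swapped pair. Similarly, the $K_{i,i+1}$-involutions for $i \geq 3$ do not touch vertices colored $1$ or $2$, so the bottom edge remains colored $\{1,2\}$ throughout Step~4, and $\cycle^{\pm 1}$ only interchanges the roles of the smallest and largest colors, which are disjoint from $\{a,a+1\}$ during the application. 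Once these invariants are checked, the entire composition is a well-defined ascent-preserving automorphism of $L_{a,a+1}(M_{p,k,m})$ with the required content-swapping property.
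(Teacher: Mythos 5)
Your proposal is correct and takes essentially the same route as the paper's proof: apply $\cycle^{a-1}$ to land in $L_{1,2}$, pass through $M^{\rev}_{p,k,m}$ using $\reflect$ together with a sequence of $\swap$ moves, reverse the remaining colors via the $K_{i,i+1}$-involutions of Proposition \ref{prop:K}, and apply $\cycle^{-(a-1)}$. The only (immaterial) difference is the order of the middle steps — the paper applies the swaps first and then $\reflect^{-1}$, whereas you apply $\reflect$ first and then the inverse swaps — and your count of $m(p-m)$ swaps, along with the explicit check that the swaps fix the colors of $v_1$ and $v_n$ and hence preserve membership in $L_{1,2}$, is if anything slightly more careful than the paper's write-up.
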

\begin{proof}
    Let $\kappa\in L_{a,a+1}(M_{p,k,m})$, and suppose its color content is $$(c_1,\ldots,c_n).$$  We start by applying $\cycle$ to $\kappa$ exactly $a-1$ times, so that $\kappa':=\cycle^{a-1}(\kappa)$ is in $L_{1,2}(M_{p,k,m})$, with content $$(c_a,c_{a+1},c_{a+2},\ldots,c_n,c_1,\ldots,c_{a-1}),$$ and has the same number of ascents as $\kappa$. By Lemmas ~\ref{lem:swap_ascents} and ~\ref{lem:swap_bijection}, we now can apply $\swap$ $a(p-a)$ times to $\kappa'$ to obtain $\kappa''$ in $L_{1,2}(M^{\rev}_{p,k,m})$ with the same content and number of ascents as $\kappa'$. Then we apply $\reflect^{-1}$ to form $\kappa'''\in L_{1,2}(M_{p,k,m})$ with content equal to $$(c_{a+1},c_a,c_{a-1},\ldots,c_1,c_n,\ldots,c_{a+2}).$$  Finally, fix a reduced word for the permutation that reverses entries 3 through $n$, say, $$(\sigma_3)(\sigma_4\sigma_3)(\sigma_5\sigma_4\sigma_3)\cdots (\sigma_{n-1}\cdots \sigma_3).$$
    We then apply the automorphisms from Proposition ~\ref{prop:K} to apply these transpositions $\sigma_i$ to the content in order, to obtain $\kappa^{(4)}$ with content 
    $$(c_{a+1},c_a,c_{a+2},\ldots,c_n,c_1,\ldots,c_{a-1}).$$
    Finally, we apply $\cycle^{-1}$ to $\kappa^{(4)}$ exactly $a-1$ times to obtain a coloring $\kappa^{(5)}$ in $L_{a,a+1}(M_{p,k,m})$ with content 
    $$(c_1,\ldots,c_{a-1},c_{a+1},c_a,c_{a+2},\ldots,c_n).$$  The automorphism $\kappa\to \kappa^{(5)}$ is well-defined, bijective, and preserves ascents, because every step of the construction has these properties. This completes the proof.
\end{proof}

Propositions ~\ref{prop:K} and ~\ref{prop:Lmixed} and Theorem ~\ref{thm:equalmixed} imply the desired symmetry of the chromatic quasisymmetric function of $(p,k)$-mixed mountain graphs.

\begin{MountainTheorem}
    Let $G$ be a $(p,k)$-mixed mountain graph. Then $X_{G}(x;q)$ is symmetric.
\end{MountainTheorem}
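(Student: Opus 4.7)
The plan is to show, for every $a\ge 1$, that there is an ascent-preserving automorphism on the set of all proper colorings of a mixed mountain graph $G$ which exchanges the multiplicities of the colors $a$ and $a+1$; this is exactly the content needed for symmetry of $X_G(x;q)$.

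First I would use the $\swap$ map, whose bijectivity and ascent-preservation I would need to establish (Lemmas \ref{lem:swap_ascents} and \ref{lem:swap_bijection}). Because $\swap$ can move any $k$-clique past an adjacent bottomless $(k+1)$-clique without changing the CQF (Theorem \ref{thm:equalmixed}), it suffices to prove symmetry for the canonical mixed mountain graph $M_{p,k,m}$ in which all $m$ $k$-cliques appear to the left of the remaining $p-m$ bottomless $(k+1)$-cliques. I would then split the set of proper colorings of $M_{p,k,m}$ into $K_{a,a+1}(M_{p,k,m})$ and $L_{a,a+1}(M_{p,k,m})$ depending on whether the $(a,a+1)$-colored subgraph includes the bottom edge.

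For colorings in $K_{a,a+1}(M_{p,k,m})$, removing the bottom edge from $M_{p,k,m}$ yields a natural unit interval graph, so the Shareshian--Wachs involution reproduced in Proposition \ref{prop:K} applies verbatim: it toggles $a$ with $a+1$ along each odd-length $(a,a+1)$-colored path component and is ascent-preserving. For colorings in $L_{a,a+1}(M_{p,k,m})$, I would mimic the construction in Proposition \ref{prop:lake} but insert the swap step. Specifically: apply $\cycle$ a total of $a-1$ times to land in $L_{1,2}(M_{p,k,m})$; apply $\swap$ $a(p-a)$ times to transport to $L_{1,2}(M^{\rev}_{p,k,m})$; apply $\reflect^{-1}$, returning to $L_{1,2}(M_{p,k,m})$ with the content of entries $3,\ldots,n$ reversed and the colors $1,2$ exchanged; apply the $\sigma_i$-automorphisms from Proposition \ref{prop:K} in a fixed reduced word for the reversal of positions $3,\ldots,n$, un-reversing those entries; and finally apply $\cycle^{-1}$ a total of $a-1$ times. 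Each step is ascent-preserving and bijective, so the composite is an ascent-preserving automorphism whose effect on the content tuple is the transposition of coordinates $a$ and $a+1$.

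The main obstacle is verifying the three auxiliary claims in the mixed setting: that $\cycle$ remains ascent-preserving when the shared vertex $v$ sits between a $k$-clique and a bottomless $(k+1)$-clique (Lemma \ref{lem:cyclemixed}), that $\reflect$ still preserves ascents when some cliques are bottomless (Lemma \ref{lem:reflectmixed}), and most substantively that $\swap$ is a well-defined ascent-preserving bijection. For the $\swap$ step I would carefully define the special vertex of a triple $(\mathcal{U},\mathcal{W},\kappa)$ via a parenthesis-matching on $S_{\mathcal{U}}\cup S_{\mathcal{W}}$ or $L_{\mathcal{U}}\cup L_{\mathcal{W}}$, depending on which side has an extra element, then check case by case that each edge of $\swap_v(G)$ contributes an ascent if and only if the corresponding edge of $G$ does. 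Injectivity of $\swap$ would be the hardest verification: the argument splits on whether $|S_\mathcal{U}|<|S_\mathcal{W}|$ or $|L_\mathcal{U}|<|L_\mathcal{W}|$ for each of $\kappa$ and $\rho$, and one must track how the matched/unmatched parentheses interact when $\kappa(v)\ne \rho(v)$ to conclude $\kappa'(v)\ne \rho'(v)$ and hence $\kappa=\rho$. Once these lemmas are in hand, the composite automorphism above and the $K_{a,a+1}$ involution together give the swap of $a$ with $a+1$ on all of $M_{p,k,m}$, which combined with Theorem \ref{thm:equalmixed} yields symmetry of $X_G(x;q)$ for every $(p,k)$-mixed mountain graph $G$.
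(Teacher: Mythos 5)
Your proposal follows essentially the same route as the paper: reduce via the $\swap$ map (Lemmas \ref{lem:swap_ascents}, \ref{lem:swap_bijection}, Theorem \ref{thm:equalmixed}) to the canonical graph $M_{p,k,m}$, handle $K_{a,a+1}$ by the Shareshian--Wachs involution of Proposition \ref{prop:K}, and handle $L_{a,a+1}$ by the composite $\cycle^{a-1}$, $\swap^{a(p-a)}$, $\reflect^{-1}$, the $\sigma_i$-automorphisms, and $\cycle^{-(a-1)}$, exactly as in Proposition \ref{prop:Lmixed}. The auxiliary verifications you flag (Lemmas \ref{lem:cyclemixed}, \ref{lem:reflectmixed}, and the special-vertex/parenthesis argument for $\swap$) are precisely the ones the paper carries out, so your outline is correct and matches the paper's proof.
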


\printbibliography

\end{document}